\newif\iffinal
\finalfalse	

\documentclass[letterpaper,11pt,reqno]{amsart} 

\RequirePackage[utf8]{inputenc}
\usepackage[portrait,margin=3cm]{geometry}
\usepackage{color}              
\usepackage[usenames,dvipsnames]{xcolor}

%
%
%

\usepackage{mathrsfs,soul}
\usepackage{hyperref}
\usepackage{amssymb,amsthm,amsmath,amsfonts,amsbsy,latexsym,dsfont}
\usepackage[textsize=small]{todonotes}       
\usepackage{graphicx}
\usepackage[numeric,initials,nobysame]{amsrefs}
\usepackage{upref,setspace}

\definecolor{BrickRed}{rgb}{0.65,0.08,0}

\usepackage{enumerate}

\numberwithin{equation}{section}
\numberwithin{figure}{section}
\numberwithin{table}{section}

\sloppy


\newtheorem{Lemma}{Lemma}[section]
\newtheorem{Proposition}{Proposition}[section]

\newtheorem{Theorem}{Theorem}[section]

\newtheorem{Condition}{Condition}[section]

\newtheorem{Remark}{Remark}[section]

\newtheorem{Corollary}{Corollary}[section]




\newcommand{\half}{{\frac{1}{2}}}

\newcommand{\lan}{\langle}
\newcommand{\ran}{\rangle}

\newcommand{\Rd}{{\Rmb^d}}



\newcommand{\Cmb}{{\mathbb{C}}}

\newcommand{\Emb}{{\mathbb{E}}}

\newcommand{\Nmb}{{\mathbb{N}}}

\newcommand{\Pmb}{{\mathbb{P}}}

\newcommand{\Rmb}{{\mathbb{R}}}
\newcommand{\Smb}{{\mathbb{S}}}



\newcommand{\Bmc}{{\mathcal{B}}}

\newcommand{\Gmc}{{\mathcal{G}}}

\newcommand{\Lmc}{{\mathcal{L}}}

\newcommand{\Pmc}{{\mathcal{P}}}

\newcommand{\Rmc}{{\mathcal{R}}}

\newcommand{\Tmc}{{\mathcal{T}}}





\newcommand{\one}{{\boldsymbol{1}}}






\newcommand{\muhat}{{\hat{\mu}}}



\newcommand{\Ctil}{{\tilde{C}}}

\newcommand{\ptil}{{\tilde{p}}}

\newcommand{\Rmctil}{{\widetilde{\Rmc}}}

\newcommand{\Tmctil}{{\widetilde{\Tmc}}}









\setcounter{secnumdepth}{3} 
\setcounter{tocdepth}{2}    
\numberwithin{equation}{section}

\newcommand{\emppre}{{\nu}}

\newcommand{\emplimit}{{\bar{\nu}}}
\newcommand{\lawlimit}{{\bar{\mu}}}
\newcommand{\lawlimitmean}{{\hat{\mu}}}
\newcommand{\Xlimit}{{\bar{X}}}

\begin{document}

	\title[Graphon particle system: Concentration bounds]{Graphon particle system: Uniform-in-time concentration bounds}

	\makeatletter
	\@namedef{subjclassname@2020}{\textup{2020} Mathematics Subject Classification}
	\makeatother

	\subjclass[2020]{%
	05C80
	60J60  	
	60K35%
	}
	\keywords{%
	graphons,
	graphon particle systems,
	mean field interaction,
	heterogeneous interaction,
	networks,
	long time behavior,
	exponential concentration bounds,
	transport inequalities%
	}
	 
	\author[Bayraktar]{Erhan Bayraktar}
	\address{Department of Mathematics, University of Michigan, 530 Church Street, Ann Arbor, MI 48109}
		\thanks{E. Bayraktar is supported in part by the National Science Foundation under DMS-2106556, and in part by the Susan M. Smith Professorship.}  
	\author[Wu]{Ruoyu Wu$^*$}
	\address{Department of Mathematics, Iowa State University, 411 Morrill Road, Ames, IA 50011} 
	\email{erhan@umich.edu, ruoyu@iastate.edu}
			 
	\begin{abstract}	
	In this paper, we consider graphon particle systems with heterogeneous mean-field type interactions and the associated finite particle approximations. Under suitable growth (resp.\ convexity) assumptions, we obtain uniform-in-time concentration estimates, over finite (resp.\ infinite) time horizon, for the Wasserstein distance between the empirical measure and its limit, extending the work of Bolley--Guillin--Villani \cite{BolleyGuillinVillani2007quantitative}.
	\end{abstract}	
	
	\maketitle

	\tableofcontents

\section{Introduction}

In this work we study the uniform-in-time exponential concentration bounds related to the graphon particle system and its finite particle approximations.
This is a continuation of our work \cite{BayraktarChakrabortyWu2020graphon,BayraktarWu2020stationarity} on the large population and long time behavior for these systems.
The interaction in the graphon particle system is of mean-field type and characterized by a (directed) graphon $G$, which is a measurable function from $[0,1] \times [0,1]$ to $[0,1]$ (see e.g.\ \cite{Lovasz2012large} for the theory of graphons).
More precisely, denoting by $\Xlimit_u(t)$ the state of the particle indexed by $u \in [0,1]$ at time $t \ge 0$,
\begin{equation}
	\Xlimit_u(t) = \Xlimit_u(0) + \int_0^t \left( f(\Xlimit_u(s)) + \int_0^1 \int_{\Rmb^d} b(\Xlimit_u(s),x)G(u,v) \,\lawlimit_{v,s}(dx)\,dv\right)ds + B_u(t), \label{eq:system}
\end{equation}
where $\lawlimit_{v,s}$ is the probability distribution of the $\Rd$-valued random variable $\Xlimit_v(s)$ for each $v \in [0,1]$ and $s \ge 0$, $f$ are $b$ are suitable functions, $\{B_u : u \in [0,1]\}$ are $d$-dimensional standard Brownian motions, and $\{\Xlimit_u(0),B_u : u \in [0,1]\}$ are mutually independent.
Due to the general form of $G$, the system \eqref{eq:system} consists of uncountably many heterogeneous particles $\Xlimit_u$, the evolution of whose probability distributions $\lawlimit_{u,t}$ are fully coupled.
Indeed, $\{(\Xlimit_u(0),B_u) : u \in [0,1]\}$ are not necessarily identically distributed and may not be measurable in $u \in [0,1]$.
Although the system \eqref{eq:system} of nonlinear McKean--Vlasov type processes $\Xlimit_u(t)$ is well-posed \cite{BayraktarChakrabortyWu2020graphon}, we will be only interested in the concentration of finite particle empirical measures around the probability laws $\lawlimit_{u,t}$, and hence one may alternatively treat \eqref{eq:system} as an informal description and work with equivalent formulations that do not involve uncountable $\{B_u : u \in [0,1]\}$; see Remark \ref{rmk:formulation} on more of this point. 
Still, in general, $\lawlimit_{u,t}$ and the associated stationary measure, provided existence under suitable assumptions, may not be tractable (even in the case of linear $f$ and $b$; see e.g.\ \cite[Example 3.1]{BayraktarWu2020stationarity}).
However, the system \eqref{eq:system} arises naturally as the limit (see e.g.\ \cite{BayraktarChakrabortyWu2020graphon,BayraktarWu2020stationarity}) of the associated finite particle system with heterogeneous interactions given by
\begin{equation}
	X_i^n(t) = \Xlimit_{\frac{i}{n}}(0) + \int_0^t \left( f(X_i^n(s)) + \frac{1}{n} \sum_{j=1}^n \xi_{ij}^n b(X_i^n(s),X_j^n(s)) \right) ds + B_{\frac{i}{n}}(t)
	\label{eq:system-n}
\end{equation}
for $i \in \{1,\dotsc,n\}$ and $t \ge 0$,
where $\{\xi_{ij}^n: 1 \le i \le j \le n\}$ is a collection of independent $[0,1]$-valued random variables sampled from the graphon $G$.
The study of uniform-in-time exponential concentration bounds will provide useful quantitative non-asymptotic estimates of the fluctuations for the convergence of the empirical measures of \eqref{eq:system-n} to the probability laws of \eqref{eq:system}.

When $G \equiv 1$ and $\xi_{ij}^n \equiv 1$, the above systems \eqref{eq:system} and \eqref{eq:system-n} reduce to the classic McKean--Vlasov processes and associated weakly interacting diffusions.
The study of this \textit{homogeneous} setup dates back to works of Boltzmann, Vlasov, McKean and others (see \cite{Sznitman1991,McKean1967propagation,Kolokoltsov2010} and references therein).  
Besides large population asymptotics such as law of large numbers (LLN) on the finite time horizon, there have been an extensive collection of results on concentration estimates and uniform-in-time LLN (see e.g.\ \cite{BudhirajaFan2017,Veretennikov2006ergodic,BolleyGuillinVillani2007quantitative,DelarueLackerRamanan2020from,BolleyGuillinMalrieu2010trend} and references therein). 

In recent ten years, there has been a growing interest in the mean-field \textit{inhomogeneous} particle system, where $G \equiv 1$ (or block-wise constant) and the interaction between particles is governed by their own types and/or random graphs (see e.g.\ \cite{BayraktarWu2019mean,BarreDobsonOttobreZatorska2021fast,BhamidiBudhirajaWu2019weakly,Delattre2016,CoppiniDietertGiacomin2019law,BudhirajaMukherjeeWu2019supermarket,Delarue2017mean,LackerSoret2020case}).
The inhomogeneity arises from random interactions $\xi_{ij}^n$ while the limiting system is still homogeneous, as opposed to uncountable heterogeneous processes in graphon particle systems \eqref{eq:system} with general graphon $G$.

The study of graphon particle systems and associated finite particle models with mean-field \textit{heterogeneous} interactions emerged recently (\cite{BayraktarChakrabortyWu2020graphon,BayraktarWu2020stationarity,BetCoppiniNardi2020weakly,Lucon2020quenched,OliveiraReis2019interacting,Coppini2019long,Coppini2021note}). 
There is also a growing number of applications of graphons in game theory; see e.g.\ \cite{Carmona2019stochastic,PariseOzdaglar2019graphon,CainesHuang2018graphon,GaoTchuendomCaines2020linear,VasalMishraVishwanath2020sequential,CainesHuang2020graphon,BayraktarWuZhang2022propagation} for the study of graphon mean field games in static and dynamic settings.
These works focus on large population convergence over finite time horizon, except for \cite{Coppini2019long,BayraktarWu2020stationarity} on uniform-in-time convergence,
where the limit is homogeneous in \cite{Coppini2019long} while heterogeneous in \cite{BayraktarWu2020stationarity}.
Deterministic dynamical systems on graphons over finite time horizon are also studied (see \cite{Medvedev2014nonlinear1,Medvedev2014nonlinear2,Medvedev2018continuum,DupuisMedvedev2020large} and references therein).

The analysis of the continuum limit \eqref{eq:system} is motivated by physical or biological applications such as in \cite{Coppini2019long,Coppini2021note,CoppiniDietertGiacomin2019law,Lucon2020quenched,Medvedev2014nonlinear2,Medvedev2014nonlinear1,Medvedev2018continuum,OliveiraReis2019interacting}, by numerical questions as in \cite{BolleyGuillinVillani2007quantitative}, or by game theory applications such as in \cite{Carmona2019stochastic,PariseOzdaglar2019graphon,CainesHuang2018graphon,GaoTchuendomCaines2020linear,VasalMishraVishwanath2020sequential,CainesHuang2020graphon}.
In this work we are interested in the approximation errors of \eqref{eq:system} by \eqref{eq:system-n} by obtaining non-asymptotic exponential concentration results for quantities like 
$$\Pmb(\sup_{0 \le t \le T} W_1(\emppre^n_t, \lawlimitmean_t) > \varepsilon) \quad \mbox{ and } \quad \sup_{t \ge 0} \Pmb(W_1(\emppre^n_t, \lawlimitmean_t) > \varepsilon)$$ 
for $\varepsilon>0$, in Theorem \ref{thm:finite} (resp.\ Theorem \ref{thm:infinite}) under suitable growth (resp.\ convexity) condition.
Here $W_1$ is the Wasserstein-1 distance, $\emppre^n_t$ is the empirical measure of $\{X_i^n(t):i=1,\dotsc,n\}$, and $\lawlimitmean_t$ is the averaged law of $\{\Xlimit_u(t):u \in [0,1]\}$.
As an application, we also obtain in Corollary \ref{cor:invariant} an exponential concentration bound for numerical reconstruction of the invariant measure of $\{\Xlimit_u(t):u \in [0,1]\}$.
Concentration estimates are shown in  \cite{BudhirajaFan2017,BolleyGuillinVillani2007quantitative,BolleyGuillinMalrieu2010trend} for mean-field (type) systems over infinite time horizon and in \cite{DelarueLackerRamanan2020from} for mean-field games over finite time horizon.
Theorems \ref{thm:finite} and \ref{thm:infinite} provide for the first time quantitative non-asymptotic exponential error bounds for graphon particle systems.
To the best of our knowledge, there are no results on concentration bounds in either inhomogeneous or heterogeneous regime, over either finite or infinite time horizon. 
Our results are consistent with those in \cite{BolleyGuillinVillani2007quantitative} when the system is mean-field (namely $G \equiv 1$ and $\xi_{ij}^n \equiv 1$), and actually improve pre-exponential estimates in the infinite time horizon (see Remark \ref{rmk:improvement}).
Although only the dissipative regime is considered for the infinite time horizon, the results here suggest that it is indeed possible to obtain concentration bounds with asymptotic independence instead of the usual assumption on homogeneity or exchangeability of the limiting system, and leave the door open for future study of more general non-dissipative regimes.
 
The proofs of Theorems \ref{thm:finite} and \ref{thm:infinite} start by reducing the analysis of $W_1(\emppre^n_t, \lawlimitmean_t)$ to that of $W_1(\emplimit^n_t, \lawlimitmean_t)$, where $\emplimit^n_t$ is the empirical measure of $\{\Xlimit_{i/n}(t):i=1,\dotsc,n\}$.
In the homogeneous setup \cite[Proposition 6.1]{BolleyGuillinVillani2007quantitative}, this reduction is done by a classic coupling argument.
However, due to the heterogeneity of the stochastic processes $\{\Xlimit_{i/n}\}$ and the presence of random interactions $\{\xi_{ij}^n\}$, such a coupling argument does not provide the desired reduction any more.
Instead, we carefully bound $W_1(\emppre^n_t, \lawlimitmean_t)$ by $W_1(\emplimit^n_t, \lawlimitmean_t)$ in a different way in Proposition \ref{prop:prep}, with certain new error terms consisting of $\{\Xlimit_{i/n}, \xi_{ij}^n\}$ and requiring new treatments.
Such new terms are further estimated in the exponential scale  in Proposition \ref{prop:p-n-eps}. 
In particular, the key ingredients for such estimates are exponential bounds obtained in Lemmas \ref{lem:Tmctil}, \ref{lem:Tmc} and \ref{lem:pn2}, using concentration inequalities and sub-Gaussian properties.
The distance $W_1(\emplimit^n_t, \lawlimitmean_t)$ is analyzed in Lemma \ref{lem:concentration-empirical} using exponential concentration bounds, for empirical measures of independent (but not necessarily identically distributed) random variables, established in Proposition \ref{prop:concentration-empirical} which is a heterogenous version of \cite[Theorem 2.1]{BolleyGuillinVillani2007quantitative} where the i.i.d.\ setup is analyzed.

\subsection{Organization}

The paper is organized as follows.
In Section \ref{sec:model} we state the space of graphons, the standing assumptions, and well-posedness of systems \eqref{eq:system} and \eqref{eq:system-n}.
The main results on concentration bounds are stated in Theorems \ref{thm:finite} and \ref{thm:infinite}.
Proofs of these two results are given in Section \ref{sec:pf}, with the key exponential estimates provided in Section \ref{sec:key}.
Finally Appendices \ref{sec:appendix-1}--\ref{sec:appendix-3} collect proofs of some auxiliary results.

We close this section by introducing some frequently used notation.

\subsection{Notation}
\label{sec:notation}
Given a Polish space $\Smb$, denote by $\Bmc(\Smb)$ the Borel $\sigma$-field. 
Let $\Pmc(\Smb)$ be the space of probability measures on $\Smb$ endowed with the topology of weak convergence.
Denote by $\Cmb([0,T]:\Rd)$ the space of continuous functions from $[0,T]$ to $\Rd$, endowed with the topology of uniform convergence.
We will use $C, C_1, C_2, \dotsc$, to denote various positive constants in the paper and $C(m)$ to emphasize the dependence on some parameter $m$.
The probability law of a random variable $X$ will be denoted by $\Lmc(X)$.
Expectations under $\Pmb$ will be denoted by $\Emb$.
To simplify the notation, we will usually write $\Emb[X^k]$ as $\Emb X^k$.
For vectors $x,y \in \Rd$, denote by $|x|$ the Euclidean norm and $x \cdot y$ the inner product.
Denote by $W_p$, $p \in \Nmb$, the Wasserstein-$p$ distance (cf.\ \cite[Chapter 6]{Villani2008optimal}) on $\Pmc(\Rmb^d)$:
\begin{align*}
	W_p(m_1,m_2) & := \left( \inf_\pi \int_{\Rmb^d \times \Rmb^d} |x-y|^p \, \pi(dx\,dy) \right)^{1/p}, \quad m_1,m_2 \in \Pmc(\Rmb^d),
\end{align*}
where the infimum is taken over all probability measures $\pi \in \Pmc(\Rmb^d \times \Rmb^d)$ with marginals $m_1$ and $m_2$, that is, $\pi(\cdot \times \Rmb^d) = m_1(\cdot)$ and $\pi(\Rmb^d \times \cdot) = m_2(\cdot)$.

\section{Model, assumptions, results}
\label{sec:model}

We follow the notation used in \cite[Chapters 7 and 8]{Lovasz2012large}.
Let $I := [0,1]$.
Denote by $\Gmc$ the space of all bounded measurable functions $G \colon I \times I \to \Rmb$.
A (directed) graphon $G$ is an element of $\Gmc$ with $0 \le G \le 1$.

Given a graphon $G \in \Gmc$ and a collection of initial distributions $\lawlimit(0) := (\lawlimit_u(0) \in \Pmc(\Rd) : u \in I)$,
recall the graphon particle system \eqref{eq:system} and the finite particle system \eqref{eq:system-n}.
The following assumptions will be made throughout the paper.

\vspace{.08in}
\noindent\textbf{Standing Assumptions:}
	\begin{itemize}
	\item 
		The map $I \ni u \mapsto \Lmc(\lawlimit_u(0)) \in \Pmc(\Rd)$ is measurable, 
		and $\sup_{u \in I} \int_{\Rd} e^{\theta_0 |x|^2} \,\lawlimit_{u,0}(dx) < \infty$ for some $\theta_0 > 0$.
	\item 
		The drift functions $f$ and $b$ are Lipschiz with Lipschitz constants $K_f$ and $K_b$, respectively, namely
		\begin{align*}
			|f(x_1)-f(x_2)| & \le K_f |x_1-x_2|, \quad \forall\, x_1,x_2 \in \Rd, \\
			|b(x_1,y_1)-b(x_2,y_2)| & \le K_b (|x_1-x_2| +|y_1-y_2|), \quad \forall\, x_1,x_2,y_1,y_2 \in \Rd.
		\end{align*}
	\item
		$G \in \Gmc$ is a directed graphon and $\xi_{ij}^n$ is the \textit{sampled graphon}, namely
		\begin{enumerate}[(i)]
		\item either $\xi_{ij}^n=G(\frac{i}{n},\frac{j}{n})$ for $i,j \in \{1,\dotsc,n\}$,
		\item or $\xi_{ij}^n=\text{Bernoulli}(G(\frac{i}{n},\frac{j}{n}))$ independently for $i,j \in \{1,\dotsc,n\}$, and independent of $\{\Xlimit_u(0),B_u : u \in I\}$.
		\end{enumerate}
	\item
		There exist some $K_G \in (0,\infty)$ and a finite collection of disjoint intervals $\{I_i : i=1,\dotsc,N\}$ for some $N \in \Nmb$, such that $\cup_{i=1}^N I_i = I$ and
		\begin{align*}
			W_2(\lawlimit_{u_1}(0),\lawlimit_{u_2}(0)) & \le K_G |u_1-u_2|, \quad u_1,u_2 \in I_i, \quad i \in \{1,\dotsc,N\}, \\
			|G(u_1,v_1)-G(u_2,v_2)| & \le K_G (|u_1-u_2|+|v_1-v_2|), \: (u_1,v_1),(u_2,v_2) \in I_i \times I_j, \: i,j \in \{1,\dotsc,N\}.
		\end{align*}
	\end{itemize} 	

We note that, since $f$ is Lipschitz, the quantity
\begin{equation*}
	c_0 := \inf_{x_1,x_2 \in \Rd, x_1 \ne x_2} \frac{-(x_1-x_2) \cdot (f(x_1)-f(x_2))}{|x_1-x_2|^2}
\end{equation*}
is well-defined, and $|c_0| \le K_f$.
From this we have 
\begin{equation}
	\label{eq:c0}
	(x_1-x_2) \cdot (f(x_1)-f(x_2)) \le -c_0 |x_1-x_2|^2, \quad \forall\, x_1,x_2 \in \Rd.
\end{equation}
Let
\begin{equation}
	\label{eq:kappa}
	\kappa := c_0-2K_b.
\end{equation}
For several long-time results the following dissipativity assumption will be made.

\begin{Condition}
	\label{cond:strong}
	The coefficient $f$ is dissipative, in the sense that $\kappa>0$.
\end{Condition}

\begin{Remark}
	A common example of $b$ and $f$ satisfying Condition \ref{cond:strong} is linear (as in the study of linear quadratic graphon mean-field games in, e.g., \cite{GaoTchuendomCaines2020linear}) and mean-reverting:
	$$f(x) + b(x,y)=-c_1x+c_2y, \quad \text{for some } c_1 > c_2 > 0.$$
	In particular, the choice of $f(x)=-(c_1+c_2)x$ and $b(x,y)=c_2(x+y)$ satisfies Condition \ref{cond:strong} since $c_0=c_1+c_2 > 2c_2=2K_b$.
	For a more general linear mean-reverting example, see \cite[Example 3.1]{BayraktarWu2020stationarity}.
\end{Remark}

The following result gives well-posedness of systems \eqref{eq:system} and \eqref{eq:system-n}. 
The proof is standard (see e.g.\ the homogeneous setup \cite{Sznitman1991} and the heterogeneous setup \cite{BayraktarChakrabortyWu2020graphon} for part (i), and \cite[Theorems 5.2.5 and 5.2.9]{KaratzasShreve1991brownian} for part (ii)) and hence is omitted.

\begin{Proposition}
	\phantomsection
	\label{prop:well-posedness}
	\begin{enumerate}[(i)] 
	\item
		There exists a unique pathwise solution to \eqref{eq:system}.
		For every $T < \infty$, the map $I \ni u \mapsto \lawlimit_u \in \Pmc(\Cmb([0,T]:\Rd))$ is measurable and 
		\begin{equation*}
			\sup_{u \in I} \sup_{t \in [0,T]} \Emb \left[ |\Xlimit_u(t)|^4 \right] < \infty.
		\end{equation*}
	\item 
		There exists a unique pathwise solution to \eqref{eq:system-n}.
		Also for every $T < \infty$, 
		\begin{equation*}
			\max_{i=1,\dotsc,n} \sup_{t \in [0,T]} \Emb \left[ |X_i^n(t)|^4 \right] < \infty.
		\end{equation*}
	\end{enumerate}	
\end{Proposition}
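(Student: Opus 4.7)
The plan is to treat parts (i) and (ii) separately, using a McKean--Vlasov fixed-point construction for the graphon system and a direct appeal to standard SDE theory for the finite particle system.

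For part (i), I would fix $T < \infty$ and set up a Picard iteration on the complete metric space $\Emc_T$ of families $\lawlimit = (\lawlimit_u)_{u \in I}$ with $\lawlimit_u \in \Pmc(\Cmb([0,T]:\Rd))$ measurable in $u$ and having uniformly bounded fourth moments, endowed with the weighted Wasserstein metric $d_{T,\lambda}(\lawlimit,\nu) := \sup_{u \in I} \sup_{t \in [0,T]} e^{-\lambda t} W_2(\lawlimit_{u,t}, \nu_{u,t})$ for a $\lambda > 0$ to be chosen. Given an input family $\lawlimit \in \Emc_T$, for each fixed $u$ the coefficient $y \mapsto f(y) + \int_0^1 \int_{\Rd} b(y,x) G(u,v) \, \lawlimit_{v,s}(dx)\,dv$ is globally Lipschitz in $y$ uniformly in $s$ (since $f$ and $b$ are Lipschitz and $G$ is bounded), so the SDE driven by $B_u$ has a unique strong solution $X^\lawlimit_u$ by standard theory. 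Setting $\Phi(\lawlimit)_u := \Lmc(X^\lawlimit_u)$, measurability of $u \mapsto \Phi(\lawlimit)_u$ is preserved because the drift and initial condition depend measurably on $u$; the fact that the driving noises $\{B_u\}$ may not be jointly measurable in $u$ is irrelevant for the marginal law. Applying It\^o's formula and Gronwall's inequality to $|X^\lawlimit_u(t) - X^\nu_u(t)|^2$ (using the coupling in which $X^\lawlimit_u, X^\nu_u$ share $B_u$ and $\Xlimit_u(0)$) yields
\begin{equation*}
	\Emb |X^\lawlimit_u(t) - X^\nu_u(t)|^2 \le C \int_0^t \int_0^1 W_2^2(\lawlimit_{v,s}, \nu_{v,s}) \, dv\,ds,
\end{equation*}
whence $d_{T,\lambda}(\Phi(\lawlimit), \Phi(\nu)) \le \frac{C}{\sqrt{2\lambda}} d_{T,\lambda}(\lawlimit,\nu)$; choosing $\lambda$ large gives a contraction and hence a unique fixed point, which is the desired solution.

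For the moment bound in part (i), I would apply It\^o's formula to $|\Xlimit_u(t)|^4$ (or to $(1+|\Xlimit_u(t)|^2)^2$), use the at-most-linear growth of $f$ and $b$ implied by the Lipschitz assumption and the boundedness of $G$, and bound the mean-field integral term by $C(1 + \sup_v \Emb |\Xlimit_v(s)|^4)^{1/2}$ via Cauchy--Schwarz. Gronwall's inequality applied to $\phi(t) := \sup_u \Emb |\Xlimit_u(t)|^4$ (which is finite initially because of the exponential moment assumption on $\lawlimit_{u,0}$ uniform in $u$) then gives the uniform bound on $[0,T]$. Measurability of $u \mapsto \lawlimit_u$ on path space is inherited from the measurability preserved by $\Phi$.

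Part (ii) is essentially immediate: the coefficient of the $nd$-dimensional SDE \eqref{eq:system-n} is globally Lipschitz in the state vector $(X_1^n, \dotsc, X_n^n)$ with a constant depending on $n$ (since $f$ and $b$ are Lipschitz and $\xi_{ij}^n \in [0,1]$), so Theorems 5.2.5 and 5.2.9 of Karatzas--Shreve give pathwise existence and uniqueness. The fourth moment bound follows from a standard It\^o plus Gronwall argument. The only mild obstacle in the whole proof is the bookkeeping around the measurability in $u$ and the uncountable family of Brownian motions in part (i); once one commits to the fixed-point scheme on the laws, everything else is routine, and as the authors remark one can alternatively read \eqref{eq:system} as a pointwise-in-$u$ family of decoupled McKean--Vlasov SDEs whose marginal laws satisfy the graphon fixed-point equation.
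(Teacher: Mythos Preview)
Your proposal is correct and follows precisely the standard route the paper indicates: the paper omits the proof entirely, citing Sznitman's classical fixed-point construction and \cite{BayraktarChakrabortyWu2020graphon} for part (i) and \cite[Theorems 5.2.5 and 5.2.9]{KaratzasShreve1991brownian} for part (ii), which is exactly the Picard/contraction scheme on laws plus It\^o--Gronwall moment bounds that you outline, and the direct Lipschitz-SDE argument you give for the finite system.
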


\begin{Remark}	
	\label{rmk:formulation}
	The formulation \eqref{eq:system} with Brownian motions $\{B_u : u \in I\}$ is used to describe the nonlinear McKean--Vlasov type stochastic processes $\{\Xlimit_u : u \in I\}$ and emphasize their independence.
	However, results in this work are on measures of $\{\Xlimit_u : u \in I\}$.
	Therefore, one may work with the following equivalent formulation with a single Brownian motion $\{B(t) : t \ge 0\}$
	\begin{equation*}
		\Xlimit_u(t) = \Xlimit_u(0) + \int_0^t \left( f(\Xlimit_u(s)) + \int_0^1 \int_{\Rmb^d} b(\Xlimit_u(s),x)G(u,v) \,\lawlimit_{v,s}(dx)\,dv\right)ds + B(t),
	\end{equation*}
	with $\lawlimit_{v,s}=\Lmc(\Xlimit_v(s))$ and these laws are independent for $v \in I$; see e.g.\ \cite{BetCoppiniNardi2020weakly,Lucon2020quenched,BayraktarWuZhang2022propagation}.	
	In fact, one may just work with the spatially extended law $\lawlimit_t(du\,dx) := \lawlimit_{u,t}(dx)\,du$ and the nonlinear Fokker--Planck equations
	\begin{align*}
		\lan \lawlimit_{u,t}, h \ran & = \lan \lawlimit_{u,0}, h \ran + \int_0^t \left\lan \lawlimit_{u,s}, \nabla h(\cdot) \cdot \left( f(\cdot) + \int_{I \times \Rd} b(\cdot,x)G(u,v) \,\lawlimit_{s}(dv\,dx)\right) \right\ran ds \\
		& \quad + \int_0^t \left\lan \lawlimit_{u,s}, \half \Delta h \right\ran ds
	\end{align*}
	for $u \in I$ and regular test functions $h$; see e.g.\ \cite{BetCoppiniNardi2020weakly,Lucon2020quenched}.
\end{Remark}


Let
\begin{equation*}
	\emppre^n(t) := \frac{1}{n} \sum_{i=1}^n \delta_{X_i^n(t)},
	\quad \emplimit^n(t) := \frac{1}{n} \sum_{i=1}^n \delta_{\Xlimit_\frac{i}{n}(t)}, \quad
	\lawlimitmean(t) := \int_I \lawlimit_{u,t} \, du.
\end{equation*}
We first have the following result on the trajectory of Wasserstein distances over finite time horizon.
Note that Condition \ref{cond:strong} is not used.

\begin{Theorem}
	\label{thm:finite}
	Fix $T \in (0,\infty)$.
	Then there exists some $K \in (0,\infty)$ such that for any $d'>d$, there exist $C =C_T \in (0,\infty)$ and $N_0 \in \Nmb$ such that
	\begin{equation}
		\label{eq:finite}
		\Pmb(\sup_{0 \le t \le T} W_1(\emppre^n_t, \lawlimitmean_t) > \varepsilon) \le C(1+\varepsilon^{-2}) \exp(-K\sqrt{n}\varepsilon)
	\end{equation}
	for all $\varepsilon > 0$ and $n \ge N_0 \max(\varepsilon^{-(d'+2)},1)$.
	If in addition
	\begin{equation}
		\label{eq:condition-special}
		\xi_{ij}^n=G(\frac{i}{n},\frac{j}{n}) \mbox{ or } b(\cdot,0) \mbox{ is bounded},
	\end{equation}
	then $\sqrt{n}\varepsilon$ in \eqref{eq:finite} could be improved to $n\varepsilon^2$.
\end{Theorem}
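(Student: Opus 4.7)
The plan is to follow the three-step strategy sketched in the introduction. First, by the triangle inequality,
\[
	W_1(\emppre^n_t, \lawlimitmean_t) \le W_1(\emppre^n_t, \emplimit^n_t) + W_1(\emplimit^n_t, \lawlimitmean_t),
\]
and since $\emppre^n_t$ and $\emplimit^n_t$ both place mass $1/n$ at $n$ points indexed by $i$, the first term is bounded by $\frac{1}{n}\sum_{i=1}^n |X_i^n(t)-\Xlimit_{i/n}(t)|$. So the task splits into (a) controlling this particle-level discrepancy uniformly over $t\in[0,T]$, and (b) controlling the purely limiting empirical quantity $W_1(\emplimit^n_t,\lawlimitmean_t)$.

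For (a), I would subtract \eqref{eq:system-n} from \eqref{eq:system} evaluated at $u=i/n$. Writing $\Delta_i(t) := X_i^n(t)-\Xlimit_{i/n}(t)$, the Brownian terms cancel and the integral drift splits into: (1) a Lipschitz contribution in $\Delta_i$ itself, (2) a Riemann-sum error replacing $\int_I G(i/n,v)\int b(\cdot,x)\lawlimit_{v,s}(dx)\,dv$ by $\frac{1}{n}\sum_j G(i/n,j/n)\int b(\cdot,x)\lawlimit_{j/n,s}(dx)$, (3) an empirical-vs-law error replacing that integral against $\lawlimit_{j/n,s}$ by the value at $\Xlimit_{j/n}(s)$, and (4) a graph-sampling error swapping $G(i/n,j/n)$ for $\xi_{ij}^n$. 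A standard Gronwall step on $\max_i |\Delta_i(t)|$ (or on $\frac{1}{n}\sum_i|\Delta_i(t)|$) then yields a pathwise bound in terms of these four error processes. Term (2) is deterministic and $O(1/n)$ by the piecewise Lipschitz regularity of $G$ and the $W_2$-regularity of $u\mapsto\lawlimit_{u,0}$ propagated through the flow; terms (3) and (4) are the genuinely stochastic contributions and correspond to the content of Proposition \ref{prop:prep} and Proposition \ref{prop:p-n-eps} in the paper.

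The main obstacle is obtaining exponential tail bounds on terms (3) and (4). The classical coupling of Bolley--Guillin--Villani does not apply because the family $\{\Xlimit_{i/n}\}$ is heterogeneous and, in the Bernoulli case, the graph edges introduce additional independent noise that the coupling cannot absorb. For term (4) I would condition on $\sigma(\Xlimit_{\cdot}, B_{\cdot})$ and apply a Hoeffding/Azuma-type inequality to the centered sum $\frac{1}{n}\sum_j(\xi_{ij}^n-G(i/n,j/n))\,b(X_i^n(s),\Xlimit_{j/n}(s))$, in which the conditional independence and $[0,1]$-range of the Bernoulli edges give clean martingale increments. The effective range of each summand is then controlled by the sub-Gaussianity of $\Xlimit_{j/n}(s)$, which follows from the $e^{\theta_0|x|^2}$-moment assumption on $\lawlimit_{u,0}$ propagated through the Lipschitz SDE (the content of Lemmas \ref{lem:Tmctil} and \ref{lem:Tmc}). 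This explains the dichotomy in the theorem: when $\xi_{ij}^n=G(i/n,j/n)$ the entire term (4) vanishes, and when $b(\cdot,0)$ is bounded the Hoeffding range is effectively bounded and full Gaussian concentration gives $\exp(-Kn\varepsilon^2)$; otherwise the product of a sub-Gaussian particle with a Bernoulli difference is only sub-exponential and one recovers $\exp(-K\sqrt{n}\varepsilon)$. Term (3), which depends only on the independent family $\{\Xlimit_{j/n}\}$, is treated by a similar conditioning argument together with the sub-Gaussian tail.

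For step (b), I would invoke a heterogeneous version of the Bolley--Guillin--Villani concentration inequality for empirical measures of independent but not identically distributed random variables (Proposition \ref{prop:concentration-empirical}, applied via Lemma \ref{lem:concentration-empirical}) at each fixed $t$; using the uniform exponential moment of $\Xlimit_{i/n}(t)$ this yields a bound of the form $\exp(-K n\varepsilon^{d'+2})$, which, once $n\ge N_0\varepsilon^{-(d'+2)}$, dominates the slower rates from step (a). To convert the pointwise-in-$t$ estimates into the supremum-in-$t$ statement \eqref{eq:finite}, I would discretize $[0,T]$ on a mesh of size $O(\varepsilon^2)$ and control the oscillation of $t\mapsto W_1(\emppre^n_t,\lawlimitmean_t)$ on intervals of this size using the Lipschitz drift and standard Gaussian tail bounds for Brownian increments; the pre-factor $1+\varepsilon^{-2}$ in \eqref{eq:finite} reflects exactly the cardinality of this mesh. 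Combining the exponential bounds from (a) and (b) and taking the slower of the two rates then delivers \eqref{eq:finite} with the promised improvement under \eqref{eq:condition-special}.
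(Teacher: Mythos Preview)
Your approach is essentially the paper's. Two slips to correct. First, in the Hoeffding step for term (4) you write $b(X_i^n(s),\Xlimit_{j/n}(s))$, but $X_i^n(s)$ depends on all of the $\xi^n_{ik}$, so conditioning on $\sigma(\Xlimit_\cdot,B_\cdot)$ does not leave you with a sum of conditionally independent centered terms; the Lipschitz reduction (your term (1)) must replace $X_i^n$ by $\Xlimit_{i/n}$ everywhere before you reach terms (2)--(4), which is exactly how Proposition~\ref{prop:prep} is set up and why $\Rmc^{n,i}_t$ in \eqref{eq:Rmctil} involves only limiting particles. Second, the heterogeneous BGV bound (Proposition~\ref{prop:concentration-empirical}, applied via Lemma~\ref{lem:concentration-empirical}) gives $\exp(-Kn\varepsilon^2)$, not $\exp(-Kn\varepsilon^{d'+2})$; the exponent $d'+2$ appears only in the threshold $n\ge N_0\varepsilon^{-(d'+2)}$, not in the rate, and this threshold is what produces the constraint on $n$ in \eqref{eq:finite}.

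One streamlining the paper exploits: it discretizes only $t\mapsto W_1(\emplimit^n_t,\lawlimitmean_t)$, not the full $W_1(\emppre^n_t,\lawlimitmean_t)$. The Gronwall estimate of Proposition~\ref{prop:prep}(i) already bounds $\sup_{t\le T} W_1(\emppre^n_t,\emplimit^n_t)$ by (a constant times) $\bigl(\int_0^T e^{-\kappa(T-s)}(\cdots)\,ds\bigr)^{1/2}$ after inserting a harmless factor $e^{|\kappa|(T-t)/2}$, so no mesh is needed for that piece. Only the purely limiting empirical term requires the $O(\varepsilon^2)$ grid, and then only the oscillations of $\emplimit^n$ (Lemma~\ref{lem:regularity-n}) and of $\lawlimitmean$ (Lemma~\ref{lem:regularity}) must be estimated; in particular you avoid proving a separate time-regularity lemma for the $n$-particle system.
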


\begin{Remark}
	The technical assumption \eqref{eq:condition-special} will be used in Lemma \ref{lem:Tmctil} to derive certain sub-Gaussian bounds. The assumption on $b$ holds in particular if $b(x,y)=b(y)$ is $x$-independent or if $b$ is bounded.
\end{Remark}

When the convexity property in Condition \ref{cond:strong} is satisfied, the following uniform-in-time marginal concentration bound holds.

\begin{Theorem}
	\label{thm:infinite}
	Suppose Condition \ref{cond:strong} holds.
	Then there exists some $K \in (0,\infty)$ such that for any $d'>d$, there exist $C \in (0,\infty)$ and $N_0 \in \Nmb$ such that
	\begin{equation}
		\label{eq:infinite}
		\sup_{t \ge 0} \Pmb(W_1(\emppre^n_t, \lawlimitmean_t) > \varepsilon) \le C\exp(-K\sqrt{n}\varepsilon)
	\end{equation}
	for	all $\varepsilon>0$ and $n \ge N_0 \max(\varepsilon^{-(d'+2)},1)$.
	If in addition \eqref{eq:condition-special} holds, then $\sqrt{n}\varepsilon$ in \eqref{eq:infinite} could be improved to $n\varepsilon^2$.
\end{Theorem}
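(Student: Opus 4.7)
The plan is to retrace the architecture used for Theorem \ref{thm:finite}---the triangle inequality
$$W_1(\emppre^n_t, \lawlimitmean_t) \le W_1(\emppre^n_t, \emplimit^n_t) + W_1(\emplimit^n_t, \lawlimitmean_t),$$
Proposition \ref{prop:prep} together with Proposition \ref{prop:p-n-eps} for the first term, Lemma \ref{lem:concentration-empirical} for the second---while replacing every $T$-dependent constant by one uniform in $t \ge 0$. Dissipativity enters in two distinct ways: to close a Gronwall estimate without an $e^{CT}$ prefactor, and to bootstrap uniform-in-$t$ Gaussian moments of the limiting particles $\Xlimit_u(t)$.

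For the first piece, the synchronous coupling (both systems are driven by the same $B_{i/n}$) reduces matters to pathwise ODEs for $\Delta_i(t) := X_i^n(t) - \Xlimit_{i/n}(t)$. Applying the chain rule to $\frac{1}{n}\sum_i |\Delta_i|^2$, using \eqref{eq:c0} and the Lipschitz property of $b$, and splitting the interaction difference into a $\Delta_j$-piece plus an exogenous piece $E_n(t)$ depending only on $\{\Xlimit_{i/n}, \xi_{ij}^n, \lawlimit_{v,t}\}$, one obtains a differential inequality of the form
$$\frac{d}{dt}\Big(\frac{1}{n}\sum_i |\Delta_i(t)|^2\Big) \le -2\kappa \cdot \frac{1}{n}\sum_i |\Delta_i(t)|^2 + 2\,E_n(t)\,\Big(\frac{1}{n}\sum_i |\Delta_i(t)|^2\Big)^{1/2}.$$
Since $\kappa > 0$, Gronwall's lemma yields $\frac{1}{n}\sum_i |\Delta_i(t)|^2 \le \bigl(\int_0^t e^{-\kappa(t-s)}E_n(s)\,ds\bigr)^2$, uniformly in $t$. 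This is the dissipative replacement for the $e^{C_T T}$ factor of Theorem \ref{thm:finite}. Since only the marginal probability at time $t$ is needed, a single-time tail bound for $E_n$ suffices; the exponential tails of $E_n(t)$ are then furnished by Proposition \ref{prop:p-n-eps} and its underlying sub-Gaussian Lemmas \ref{lem:Tmctil}, \ref{lem:Tmc}, \ref{lem:pn2}.

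The input these lemmas require, and the main technical obstacle, is a uniform-in-$t$ Gaussian moment bound $\sup_{u \in I,\, t \ge 0} \Emb\, e^{\theta |\Xlimit_u(t)|^2} < \infty$ for some $\theta > 0$. This is the second place where Condition \ref{cond:strong} is used: a Lyapunov computation on $|\Xlimit_u(t)|^2$, combining \eqref{eq:c0}, the Lipschitz growth of $b$, and $\kappa > 0$, produces a drift which is negative outside a compact set, so that the Gaussian moment present at $t=0$ by the standing assumption is confined for all time. With this bound in hand, every constant appearing in Proposition \ref{prop:p-n-eps} and in the tail estimate for $E_n(t)$ becomes $t$-independent.

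For the second piece, $\{\Xlimit_{i/n}(t)\}_{i=1}^n$ are independent random variables with laws averaging to $\lawlimitmean_t$ and Gaussian moments uniform in $t$ by the above bootstrap, so Lemma \ref{lem:concentration-empirical} applied pointwise in $t$ gives the desired $\exp(-K\sqrt n\,\varepsilon)$ bound on $\Pmb(W_1(\emplimit^n_t,\lawlimitmean_t)>\varepsilon/2)$. A union bound on the two halves proves \eqref{eq:infinite}; the improvement to $\exp(-Kn\varepsilon^2)$ under \eqref{eq:condition-special} is inherited at the level of $E_n(t)$ through the sharper sub-Gaussian control provided by Lemma \ref{lem:Tmctil}.
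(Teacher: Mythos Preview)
Your proposal is correct and follows essentially the same route as the paper's proof, which consists of a direct combination of Proposition~\ref{prop:prep}(ii), Proposition~\ref{prop:p-n-eps}(ii), and Lemma~\ref{lem:concentration-empirical}(ii). One minor correction: Lemma~\ref{lem:concentration-empirical}(ii) in fact gives the stronger rate $\exp(-Kn\varepsilon^2)$ for the second piece $W_1(\emplimit^n_t,\lawlimitmean_t)$, and it is the \emph{first} piece---via Proposition~\ref{prop:p-n-eps}(ii) and specifically Lemma~\ref{lem:Tmctil}---that produces the bottleneck rate $\exp(-K\sqrt{n}\varepsilon)$.
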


\begin{Remark} 
	\label{rmk:improvement}
	In the classic mean-field regime, namely when $G \equiv 1$, one has $\xi_{ij}^n \equiv 1$ so that the condition \eqref{eq:condition-special} holds and the exponential rates in \eqref{eq:finite} and \eqref{eq:infinite} are $n\varepsilon^2$. 
	From this we observe that Theorems \ref{thm:finite} and \ref{thm:infinite} are consistent with the mean-field results in \cite[Theorems 2.9 and 2.12] {BolleyGuillinVillani2007quantitative}.
	In fact, the pre-exponential term $C$ in \eqref{eq:infinite} does not depend on $\varepsilon$, while the corresponding term in \cite[Theorem 2.12] {BolleyGuillinVillani2007quantitative} is $C(1+\varepsilon^{-2})$.
	This improvement is due to a different argument (Proposition \ref{prop:p-n-eps}) that does not use time discretization as in \cite[Section 7.2]{BolleyGuillinVillani2007quantitative}.
\end{Remark}

Using Theorem \ref{thm:infinite} we could obtain an exponential concentration bound for numerical reconstruction of the invariant measure.


\begin{Corollary}
	\label{cor:invariant}
	Suppose Condition \ref{cond:strong} holds.
	Then there exist a unique probability measure $\lawlimitmean_\infty \in \Pmc(\Rd)$ and some $\varepsilon_0,T_0,K \in (0,\infty)$ such that for any $d'>d$, there exist $C \in (0,\infty)$ and $N_0 \in \Nmb$ such that
	\begin{equation}
		\label{eq:invariant}
		\sup_{t \ge T_0\log(\varepsilon_0/\varepsilon)} \Pmb(W_1(\emppre^n_t, \lawlimitmean_\infty) > \varepsilon) \le C \exp(-K\sqrt{n}\varepsilon)
	\end{equation}
	for all $\varepsilon > 0$ and $n \ge N_0 \max(\varepsilon^{-(d'+2)},1)$.
	If in addition \eqref{eq:condition-special} holds, then $\sqrt{n}\varepsilon$ in \eqref{eq:invariant} could be improved to $n\varepsilon^2$.	
\end{Corollary}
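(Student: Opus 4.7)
The plan is to split the target probability via the triangle inequality into a contribution controlled by Theorem \ref{thm:infinite} and a deterministic convergence-to-equilibrium term, and then argue that under Condition \ref{cond:strong} the latter decays exponentially fast in $t$.

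First I would identify $\lawlimitmean_\infty$ and prove exponential ergodicity by a synchronous coupling. Let $(\Xlimit_u)_{u\in I}$ and $(\Ylimit_u)_{u\in I}$ be two solutions of \eqref{eq:system} driven by the same Brownian motions $\{B_u\}$ but with possibly different initial laws $\lawlimit_{u,0}$ and $\nu_{u,0}$. Setting $\Delta_u(t):=\Xlimit_u(t)-\Ylimit_u(t)$, the noise cancels and, decomposing the drift difference into a ``Lipschitz in $x$'' piece and a ``Kantorovich duality on $b(y,\cdot)$'' piece and using $|G|\le 1$, one obtains
\begin{equation*}
	\tfrac{d}{dt}|\Delta_u(t)|^2 \le -2c_0|\Delta_u|^2 + 2K_b|\Delta_u|^2 + 2K_b|\Delta_u|\int_I W_1(\lawlimit_{v,t},\nu_{v,t})\,dv.
\end{equation*}
Taking expectations, Cauchy--Schwarz, and setting $\phi_u(t):=\sqrt{\Emb|\Delta_u(t)|^2}$ gives $\phi_u'\le(-c_0+K_b)\phi_u+K_b\int_I\phi_v\,dv$, hence $\Phi(t):=\sup_{u\in I}\phi_u(t)$ satisfies $\Phi'\le(-c_0+2K_b)\Phi=-\kappa\Phi$. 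Under Condition \ref{cond:strong} this yields contraction at rate $\kappa$ in $W_2$, and a standard Banach fixed-point argument (or the companion result in \cite{BayraktarWu2020stationarity}) produces a unique invariant family $(\lawlimit_{u,\infty})_{u\in I}$ and a unique $\lawlimitmean_\infty:=\int_I\lawlimit_{u,\infty}\,du$ such that, using convexity of $W_1$ under averaging,
\begin{equation*}
	W_1(\lawlimitmean_t,\lawlimitmean_\infty)\le\int_I W_1(\lawlimit_{u,t},\lawlimit_{u,\infty})\,du\le C_1 e^{-\kappa t}
\end{equation*}
for some $C_1$ depending only on initial moments and system parameters.

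Second, I would assemble everything. Choose $T_0:=1/\kappa$ and $\varepsilon_0:=2C_1$, so that for $t\ge T_0\log(\varepsilon_0/\varepsilon)$ one has $W_1(\lawlimitmean_t,\lawlimitmean_\infty)\le \varepsilon/2$. The triangle inequality then yields
\begin{equation*}
	\Pmb\bigl(W_1(\emppre^n_t,\lawlimitmean_\infty)>\varepsilon\bigr)\le \Pmb\bigl(W_1(\emppre^n_t,\lawlimitmean_t)>\varepsilon/2\bigr),
\end{equation*}
and applying Theorem \ref{thm:infinite} with $\varepsilon/2$ (absorbing the factor $1/2$ into the constants $K,C,N_0$) immediately produces \eqref{eq:invariant}. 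The sharper exponent $n\varepsilon^2$ under the technical assumption \eqref{eq:condition-special} is inherited directly from the corresponding improvement in Theorem \ref{thm:infinite}.

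The main obstacle is step one: keeping the contraction rate precisely equal to $\kappa$ rather than a worse constant when handling the cross term $|\Delta_u|\cdot\int_I W_1(\lawlimit_v,\nu_v)\,dv$. The clean route is to work with $\phi_u=\sqrt{\Emb|\Delta_u|^2}$ (avoiding any AM--GM loss) and then pass to $\Phi=\sup_u\phi_u$, which gives exactly the rate $\kappa=c_0-2K_b$. The only other mildly technical point is the measurability in $u$ of the invariant family $\lawlimit_{u,\infty}$, which may simply be imported from \cite{BayraktarWu2020stationarity} rather than redone here.
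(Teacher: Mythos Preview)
Your proposal is correct and follows essentially the same route as the paper. The paper simply cites \cite[Theorem 3.1]{BayraktarWu2020stationarity} for the exponential ergodicity $W_2(\lawlimitmean_t,\lawlimitmean_\infty)\le \Ctil e^{-\kappa t/2}$, sets $T_0=2/\kappa$, $\varepsilon_0=2\Ctil$, and then carries out exactly the triangle-inequality reduction to Theorem~\ref{thm:infinite} that you describe; your sketch of the synchronous coupling is effectively a reproduction of that cited result (with the cosmetic difference that you obtain rate $\kappa$ rather than $\kappa/2$, hence $T_0=1/\kappa$, which is immaterial since $T_0$ is an unspecified constant).
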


\begin{proof}
	By the exponential ergodicity of $\lawlimitmean_t$ shown in
	\cite[Theorem 3.1]{BayraktarWu2020stationarity}, there exist a unique probability measure $\lawlimitmean_\infty \in \Pmc(\Rd)$ and some $\Ctil \in (0,\infty)$ such that 
	$$W_2(\lawlimitmean_t,\lawlimitmean_\infty) \le \Ctil e^{-\kappa t/2}, \quad t \ge 0.$$
	Now let $T_0=2/\kappa$ and $\varepsilon_0=2\Ctil$.
	Then for $t \ge T_0\log(\varepsilon_0/\varepsilon)$, we have $$W_1(\lawlimitmean_t,\lawlimitmean_\infty) \le W_2(\lawlimitmean_t,\lawlimitmean_\infty) \le \Ctil e^{-\kappa t/2} \le \varepsilon/2$$
	and hence
	\begin{equation*}
		\Pmb(W_1(\emppre^n_t, \lawlimitmean_\infty) > \varepsilon) \le \Pmb(W_1(\emppre^n_t, \lawlimitmean_t) > \varepsilon/2).
	\end{equation*}
	The result then follows from Theorem \ref{thm:infinite}.
\end{proof}

\section{Proofs}
\label{sec:pf}

In this section we prove Theorems \ref{thm:finite} and \ref{thm:infinite} on the concentration inequalities of $W_1(\emppre^n_t, \lawlimitmean_t)$.
We first state some properties of the limiting system in Section \ref{sec:property}.
Using these, in Section \ref{sec:transfer}, we reduce the analysis of $W_1(\emppre^n_t, \lawlimitmean_t)$ to that of $W_1(\emplimit^n_t, \lawlimitmean_t)$ and other quantities only involving independent but heterogeneous processes.
Such quantities are carefully analyzed in Section \ref{sec:key}.
In Section \ref{sec:complete-pf} we obtain the concentration bounds for $W_1(\emplimit^n_t, \lawlimitmean_t)$ and complete the proofs of Theorems \ref{thm:infinite} and \ref{thm:finite}.

\subsection{Properties of the limiting system}
\label{sec:property}

In this section we state some properties of the limiting system that will be used later.
Proofs will be given in Appendix \ref{sec:appendix-1}.

\begin{Lemma}{\cite[Theorem 2.1]{BayraktarChakrabortyWu2020graphon}}
	\label{lem:Lipschitz}
	For every $T \in (0,\infty)$, there exists some $C(T) \in (0,\infty)$ such that
	$$\sup_{t \in [0,T]} W_2(\lawlimit_{u_1}(t),\lawlimit_{u_2}(t)) \le C(T) |u_1-u_2|$$
	whenever $u_1,u_2 \in I_i$ for some $i \in \{1,\dotsc,N\}$.
\end{Lemma}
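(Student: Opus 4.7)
The plan is to prove the bound by a synchronous coupling argument followed by a Grönwall estimate. Fix $i \in \{1,\dotsc,N\}$ and $u_1, u_2 \in I_i$. Using the equivalent single-Brownian-motion formulation from Remark \ref{rmk:formulation}, I would drive both $\Xlimit_{u_1}$ and $\Xlimit_{u_2}$ by the same Brownian motion $B$ and realize $(\Xlimit_{u_1}(0), \Xlimit_{u_2}(0))$ as an optimal $W_2$-coupling of $\lawlimit_{u_1}(0)$ and $\lawlimit_{u_2}(0)$. By the Standing Assumptions this gives $\Emb |\Xlimit_{u_1}(0)-\Xlimit_{u_2}(0)|^2 = W_2^2(\lawlimit_{u_1}(0),\lawlimit_{u_2}(0)) \le K_G^2 |u_1-u_2|^2$. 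The Brownian contributions cancel in the difference $\Xlimit_{u_1}(t)-\Xlimit_{u_2}(t)$, leaving a purely deterministic integrated-drift expression to control.

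I would then decompose the drift difference as
$$\int_I \! \int_{\Rd} b(\Xlimit_{u_1}(s),x) G(u_1,v)\lawlimit_{v,s}(dx)\,dv - \int_I \! \int_{\Rd} b(\Xlimit_{u_2}(s),x) G(u_2,v)\lawlimit_{v,s}(dx)\,dv = A_1(s) + A_2(s),$$
where $A_1(s)$ keeps $G(u_1,\cdot)$ and replaces $\Xlimit_{u_1}(s)$ by $\Xlimit_{u_2}(s)$ inside $b$, and $A_2(s)$ fixes $b(\Xlimit_{u_2}(s),\cdot)$ and replaces $G(u_1,\cdot)$ by $G(u_2,\cdot)$. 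The Lipschitz property of $b$ immediately gives the pathwise bound $|A_1(s)| \le K_b |\Xlimit_{u_1}(s)-\Xlimit_{u_2}(s)|$. For $A_2(s)$ the key is to partition the $v$-integration along $I = \bigcup_{j=1}^N I_j$: for $v \in I_j$, both pairs $(u_1,v)$ and $(u_2,v)$ lie in $I_i \times I_j$, so the Standing Assumption yields $|G(u_1,v)-G(u_2,v)| \le K_G |u_1-u_2|$. Combined with the linear growth of $b$ inherited from its Lipschitz property and the uniform second-moment bound from Proposition \ref{prop:well-posedness}(i), this gives $\Emb |A_2(s)|^2 \le C(T) |u_1-u_2|^2$ for $s \in [0,T]$.

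Taking $\Emb|\cdot|^2$ of the integral equation for $\Xlimit_{u_1}(t)-\Xlimit_{u_2}(t)$, using the Lipschitz property of $f$, expanding with $(a+b+c)^2 \le 3(a^2+b^2+c^2)$, and applying Cauchy--Schwarz in time produces an inequality of the form
$$\Emb |\Xlimit_{u_1}(t)-\Xlimit_{u_2}(t)|^2 \le C_1(T)|u_1-u_2|^2 + C_2(T) \int_0^t \Emb |\Xlimit_{u_1}(s)-\Xlimit_{u_2}(s)|^2\,ds.$$
Grönwall's inequality then gives $\Emb |\Xlimit_{u_1}(t)-\Xlimit_{u_2}(t)|^2 \le C(T)|u_1-u_2|^2$ uniformly for $t \in [0,T]$, and the claim follows since $W_2^2(\lawlimit_{u_1}(t),\lawlimit_{u_2}(t)) \le \Emb|\Xlimit_{u_1}(t)-\Xlimit_{u_2}(t)|^2$ by the definition of the Wasserstein distance.

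The main obstacle is the treatment of $A_2$: the Standing Assumption only provides Lipschitz continuity of $G$ on each product piece $I_i \times I_j$, and the possible jumps of $G$ across different pieces are precisely what force the restriction $u_1, u_2 \in I_i$ in the statement. Splitting the $v$-integral according to the partition $\{I_j\}$ and exploiting the uniform moment bounds is the one nontrivial structural move; the rest is a routine synchronous-coupling and Grönwall computation.
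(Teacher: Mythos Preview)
Your proposal is correct and follows the standard synchronous-coupling plus Grönwall route that is essentially forced by the structure of the problem; the decomposition of the drift into the $A_1$ and $A_2$ pieces and the observation that $u_1,u_2\in I_i$ ensures $(u_1,v),(u_2,v)\in I_i\times I_j$ for each $v\in I_j$ are exactly the right moves. Note, however, that the present paper does not give its own proof of this lemma at all: it is quoted directly from \cite[Theorem~2.1]{BayraktarChakrabortyWu2020graphon}, and Appendix~\ref{sec:appendix-1} only supplies proofs for Lemmas~\ref{lem:square-exponential-moment}--\ref{lem:regularity-n}. Your argument is what one would expect the cited reference to contain.
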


\begin{Lemma}{\cite[Proposition 3.1 and Corollary 3.1]{BayraktarWu2020stationarity}}
	\label{lem:Lipschitz-special}
	Suppose Condition \ref{cond:strong} holds.
	Then
	$$\sup_{u \in I} \sup_{t \ge 0} \Emb \left[ |\Xlimit_u(t)|^4 \right] < \infty.$$
	Moreover, there exists some $C \in (0,\infty)$ such that
	$$\sup_{t \ge 0} W_2(\lawlimit_{u_1}(t),\lawlimit_{u_2}(t)) \le C |u_1-u_2|$$
	whenever $u_1,u_2 \in I_i$ for some $i \in \{1,\dotsc,N\}$.
\end{Lemma}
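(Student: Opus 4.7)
The proof proceeds via Itô calculus leveraging the dissipativity $\kappa = c_0 - 2K_b > 0$ in Condition~\ref{cond:strong}.

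\textbf{Part (i): Uniform-in-time fourth moment bound.} I would apply Itô's formula to $|\Xlimit_u(t)|^{2p}$ for $p=1,2$. Using \eqref{eq:c0} in the form $x\cdot f(x) \le -c_0|x|^2 + |x|\,|f(0)|$, together with $|b(x,y)|\le |b(0,0)|+K_b(|x|+|y|)$ and $G\le 1$, the drift in the expected derivative of $\Emb|\Xlimit_u(t)|^{2p}$ contributes at most $-2p(c_0-K_b)\Emb|\Xlimit_u(t)|^{2p}$ plus lower-order terms involving $\Emb|\Xlimit_u(t)|^{2p-1}$ and $\sup_v \Emb|\Xlimit_v(t)|$. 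Setting $m_{2p}(t) := \sup_{u\in I}\Emb|\Xlimit_u(t)|^{2p}$ and using Jensen's and Young's inequalities to absorb the lower-order terms, one obtains an integral inequality of the form $m_{2p}(t) \le m_{2p}(0) + \int_0^t[-(2p(c_0-K_b)-\delta)\,m_{2p}(s) + C_\delta]\,ds$ for arbitrary small $\delta>0$. Since $c_0 - K_b \ge \kappa > 0$, a Gronwall argument yields $\sup_{t\ge 0} m_{2p}(t) < \infty$; the case $p=2$ is the claim. Finite initial values are provided by the standing exponential-moment assumption.

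\textbf{Part (ii): Uniform-in-time Lipschitz continuity in $u$.} For $u_1,u_2\in I_i$, I would couple $\Xlimit_{u_1}$ and $\Xlimit_{u_2}$ using a common Brownian motion and an optimal $W_2$-coupling of the initial distributions, so that the Brownian parts cancel and $\Emb|\Xlimit_{u_1}(0)-\Xlimit_{u_2}(0)|^2 \le K_G^2|u_1-u_2|^2$. Applying Itô to $|\Xlimit_{u_1}(t)-\Xlimit_{u_2}(t)|^2$, the $f$-contribution is bounded by $-2c_0\Emb|\Xlimit_{u_1}(t)-\Xlimit_{u_2}(t)|^2$ via \eqref{eq:c0}. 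The interaction difference splits as
\[
 b(\Xlimit_{u_1},x)G(u_1,v) - b(\Xlimit_{u_2},x)G(u_2,v) = [b(\Xlimit_{u_1},x)-b(\Xlimit_{u_2},x)]G(u_1,v) + b(\Xlimit_{u_2},x)[G(u_1,v)-G(u_2,v)].
\]
The first summand contributes at most $2K_b\Emb|\Xlimit_{u_1}(t)-\Xlimit_{u_2}(t)|^2$ via Cauchy--Schwarz and the Lipschitz property of $b$. For the second summand, the key observation is that for each $v\in I$ we have $v\in I_j$ for some $j$, so $(u_k,v)\in I_i\times I_j$ for $k=1,2$, and the block-Lipschitz hypothesis on $G$ yields $|G(u_1,v)-G(u_2,v)|\le K_G|u_1-u_2|$ uniformly in $v$; combined with the uniform moment bound from Part (i), this contributes at most $C|u_1-u_2|\cdot\Emb|\Xlimit_{u_1}(t)-\Xlimit_{u_2}(t)|$. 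Young's inequality then splits this into $\delta\,\Emb|\Xlimit_{u_1}(t)-\Xlimit_{u_2}(t)|^2 + C_\delta|u_1-u_2|^2$. Combining,
\[
 \tfrac{d}{dt}\Emb|\Xlimit_{u_1}(t)-\Xlimit_{u_2}(t)|^2 \le -(2\kappa-\delta)\,\Emb|\Xlimit_{u_1}(t)-\Xlimit_{u_2}(t)|^2 + C_\delta|u_1-u_2|^2,
\]
and for $\delta$ small, Gronwall gives $\Emb|\Xlimit_{u_1}(t)-\Xlimit_{u_2}(t)|^2 \le C|u_1-u_2|^2$ uniformly in $t$, which bounds $W_2(\lawlimit_{u_1}(t),\lawlimit_{u_2}(t))$ by the definition of $W_2$.

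The main obstacle is controlling the auxiliary moment terms \emph{uniformly in time}: the cross term $b(\Xlimit_{u_2},x)$ integrated against $\lawlimit_{v,s}(dx)\,dv$ requires a time-uniform bound on $\sup_{v,s}\Emb|\Xlimit_v(s)|$, which is exactly Part (i). This forces the two assertions to be established in order, and the strict positivity of $\kappa$ is essential both to close the Gronwall loop in Part (i) and to absorb the Young's inequality slack in Part (ii).
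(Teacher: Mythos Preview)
The paper does not prove this lemma itself; it is imported directly from the companion paper \cite{BayraktarWu2020stationarity} (Proposition~3.1 and Corollary~3.1 there), and Appendix~\ref{sec:appendix-1} supplies proofs only for Lemmas~\ref{lem:square-exponential-moment}--\ref{lem:regularity-n}. Your sketch is the standard dissipativity-plus-Gronwall argument and is correct, including the synchronous coupling in Part~(ii) and the bootstrap that uses Part~(i) to control the $b(\Xlimit_{u_2},x)$ term uniformly in time. One cosmetic remark: in Part~(ii) the first summand actually yields the quadratic coefficient $-2(c_0-K_b)$ rather than $-2\kappa$; since $c_0-K_b=\kappa+K_b\ge\kappa$ this is a stronger bound and the final Gronwall step goes through as written.
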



The following result on the square exponential moments of $\Xlimit_u$ is a generalization of \cite[Proposition 4.1]{BolleyGuillinVillani2007quantitative} and \cite[Proposition 4.3]{BudhirajaFan2017}. 
The proof is provided in Appendix for completeness.
Recall $\theta_0$ in the standing assumption and $\kappa$ in \eqref{eq:kappa}.

\begin{Lemma}
	\phantomsection
	\label{lem:square-exponential-moment}
	\begin{enumerate}[(i)]
	\item For every $T \in (0,\infty)$, there exists some $\theta_T \in (0,\infty)$ such that 
	\begin{equation*}
		\sup_{u \in I} \Emb[ \sup_{t \in [0,T]} e^{\theta_T |\Xlimit_u(t)|^2}] < \infty.
	\end{equation*}
	\item Suppose Condition \ref{cond:strong} holds. Then for any $\theta \in (0,(\kappa \wedge \theta_0)/4)$, we have
	\begin{equation*}
		\sup_{t \ge 0} \sup_{u \in I} \Emb[ e^{\theta |\Xlimit_u(t)|^2}] < \infty.
	\end{equation*}
	\end{enumerate}
\end{Lemma}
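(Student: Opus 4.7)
The proof follows the template of \cite{BolleyGuillinVillani2007quantitative} Proposition 4.1 and \cite{BudhirajaFan2017} Proposition 4.3, adapted to the heterogeneous graphon setting. The unifying observation is that the drift in \eqref{eq:system}, thanks to the Lipschitz assumptions on $f,b$ and the uniform first-moment bounds, has at most linear growth in $\Xlimit_u(s)$ with constants independent of $u\in I$.

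For part (i), the plan is to use a pathwise Gronwall estimate. By Proposition \ref{prop:well-posedness}(i), $M_T := \sup_{u\in I}\sup_{t\le T}\Emb|\Xlimit_u(t)| < \infty$. Combining $|b(x,y)| \le |b(0,0)| + K_b(|x|+|y|)$ with $G \le 1$, the drift in \eqref{eq:system} is bounded by $C_0 + C_1|\Xlimit_u(s)|$ for $u$-independent constants $C_0,C_1$. Taking the pathwise supremum in the integrated form of \eqref{eq:system} and applying Gronwall yield
\begin{equation*}
	\sup_{s\le T}|\Xlimit_u(s)| \le e^{C_1 T}\bigl(|\Xlimit_u(0)| + C_0 T + \sup_{s\le T}|B_u(s)|\bigr),
\end{equation*}
and hence $\sup_{s\le T}|\Xlimit_u(s)|^2 \le C_T(|\Xlimit_u(0)|^2 + 1 + \sup_{s\le T}|B_u(s)|^2)$ for some $C_T$ depending only on $T$. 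Choosing $\theta_T>0$ small enough that $3\theta_T C_T$ lies strictly below both $\theta_0$ and the Fernique-type threshold for $\sup_{s\le T}|B_u(s)|^2$, a H\"older split then combines the standing assumption on the initial data with the Gaussian-tail bound for the Brownian supremum to yield $\sup_{u\in I}\Emb[\sup_{t\le T} e^{\theta_T|\Xlimit_u(t)|^2}] < \infty$.

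For part (ii), the plan is to apply It\^o's formula to $V(x) = e^{\theta|x|^2}$, whose derivatives are $\nabla V = 2\theta xV$ and $\Delta V = (2\theta d + 4\theta^2|x|^2)V$. Under Condition \ref{cond:strong}, Lemma \ref{lem:Lipschitz-special} promotes the first-moment bound to hold uniformly in time, so combining \eqref{eq:c0} with the Lipschitz estimate on $b$ shows the drift $\mu_u(x,s)$ of \eqref{eq:system} satisfies $x\cdot\mu_u(x,s) \le -(\kappa+K_b)|x|^2 + C_2|x|$. Absorbing $C_2|x|$ via Young's inequality, for $\theta<(\kappa\wedge\theta_0)/4$ the coefficient $-2\theta(\kappa+K_b-\theta)$ remains strictly negative even after incorporating the $2\theta^2|x|^2$ It\^o correction, and $\Emb V(\Xlimit_u(0)) < \infty$ by the standing assumption. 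One therefore obtains a Lyapunov-type bound $\Lmc V(x) \le C_0 - V(x)$ on $\Rd$. Localizing at $\tau_N = \inf\{t : |\Xlimit_u(t)| \ge N\}$ to promote the It\^o local martingale to a true martingale, taking expectations, and applying Gronwall's inequality yield $\Emb V(\Xlimit_u(t)) \le \max(\Emb V(\Xlimit_u(0)), C_0)$ for all $t\ge 0$, uniformly in $u\in I$.

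The main care point throughout is uniformity in $u$ of the intermediate constants, which is guaranteed by the uniform-in-$u$ moment bounds of Proposition \ref{prop:well-posedness}(i) and Lemma \ref{lem:Lipschitz-special} (rather than by any Lipschitz dependence on $u$, which we do not assume). The conservative threshold $(\kappa\wedge\theta_0)/4$ in part (ii) leaves room to absorb jointly the $2\theta^2|x|^2$ term from It\^o, the Young residual needed to eliminate the linear drift, and the $\theta<\theta_0$ constraint ensuring finiteness at $t=0$.
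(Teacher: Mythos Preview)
Your proposal is correct and follows essentially the same approach as the paper's proof: part (i) via a pathwise Gronwall bound on $|\Xlimit_u(t)|$ in terms of $|\Xlimit_u(0)|$ and $\sup_{s\le T}|B_u(s)|$ followed by a H\"older split, and part (ii) via It\^o's formula on $e^{\theta|x|^2}$ combined with the dissipativity estimate and a Lyapunov-type differential inequality. The only cosmetic differences are that the paper absorbs the linear drift term into the quadratic at a slightly earlier stage (obtaining $-\kappa|x|^2 + C$ rather than your $-(\kappa+K_b)|x|^2 + C|x|$), and justifies the martingale property directly rather than via localization; neither affects the argument.
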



The following two lemmas on the time-regularity of $\Xlimit_u$, $\lawlimitmean$ and $\emplimit^n$ are generalizations of results in \cite[Section 5]{BolleyGuillinVillani2007quantitative} and \cite[Section 4.3.3]{BudhirajaFan2017}.
We provide the proofs in Appendix for completeness.

\begin{Lemma}
	\label{lem:regularity-moment}
	For every $T \in (0,\infty)$, there exist some $C(T),\theta_T \in (0,\infty)$ such that, for all $s,t,t_0,\Delta \in [0,T]$ and $u \in I$,
	\begin{align*}
		\Emb |\Xlimit_u(t)-\Xlimit_u(s)|^4 & \le C(T)|t-s|^2, \\
		\Emb \left[ \sup_{t_0 \le s \le t \le t_0+\Delta} \exp \left(\theta_T |\Xlimit_u(t)-\Xlimit_u(s)|^2 \right) \right] & \le 1+C(T)\Delta.
	\end{align*}
\end{Lemma}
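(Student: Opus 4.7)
For the first inequality, the plan is to use the SDE representation
\[
\Xlimit_u(t) - \Xlimit_u(s) = \int_s^t \left( f(\Xlimit_u(r)) + \int_I \int_{\Rd} b(\Xlimit_u(r),x) G(u,v)\,\lawlimit_{v,r}(dx)\,dv \right) dr + (B_u(t) - B_u(s)),
\]
apply the inequality $|a+b|^4 \le 8(|a|^4 + |b|^4)$, and then estimate the two pieces separately. For the drift piece, Hölder gives $\bigl|\int_s^t \text{drift}(r)\,dr\bigr|^4 \le |t-s|^3 \int_s^t |\text{drift}(r)|^4\,dr$, and the Lipschitz property yields $|\text{drift}(r)| \le C(1 + |\Xlimit_u(r)| + \sup_{v,s} \Emb |\Xlimit_v(s)|)$. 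The uniform fourth moment bound in Proposition \ref{prop:well-posedness}(i) then gives a bound of order $|t-s|^4$. For the Brownian increment, the standard identity $\Emb |B_u(t) - B_u(s)|^4 = C_d |t-s|^2$ applies. Since $|t-s| \le T$, combining the two estimates yields $C(T)|t-s|^2$.

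For the second inequality, first reduce the double supremum: the triangle inequality gives
\[
\sup_{t_0 \le s \le t \le t_0 + \Delta} |\Xlimit_u(t) - \Xlimit_u(s)|^2 \le 4\, \sup_{t_0 \le r \le t_0 + \Delta} |\Xlimit_u(r) - \Xlimit_u(t_0)|^2,
\]
and further split $\Xlimit_u(r) - \Xlimit_u(t_0) = A_u(r) + W_u(r)$ into drift part $A_u$ and Brownian part $W_u(r) := B_u(r) - B_u(t_0)$. Writing $\Ztil := \sup_r |A_u(r)|^2$ and $\Wtil := \sup_r |W_u(r)|^2$, we have pointwise $\sup_{s,t} |\Xlimit_u(t) - \Xlimit_u(s)|^2 \le 8\Ztil + 8\Wtil$, so Cauchy--Schwarz reduces the claim to proving $\Emb[\exp(16 \theta_T \Ztil)] \le 1 + C\Delta$ and $\Emb[\exp(16 \theta_T \Wtil)] \le 1 + C\Delta$ for $\theta_T$ small enough (depending on $T$).

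The key tool for both bounds is the Taylor-type inequality $e^{\lambda Y} - 1 \le \lambda Y e^{\lambda Y}$ for $Y \ge 0$, followed by Cauchy--Schwarz:
\[
\Emb[e^{\lambda Y}] - 1 \le \lambda \bigl(\Emb Y^2\bigr)^{1/2} \bigl(\Emb e^{2\lambda Y}\bigr)^{1/2}.
\]
For $Y = \Ztil$: Hölder gives $\Ztil \le C \Delta^2 (1 + \sup_{s \in [0,T]} |\Xlimit_u(s)|^2)$, so $\Emb \Ztil^2 \le C \Delta^4$ by the fourth-moment bound, while $\Emb e^{2\lambda \Ztil}$ is bounded provided $2\lambda C \Delta^2 \le \theta_T$, which Lemma \ref{lem:square-exponential-moment}(i) ensures for $\lambda = 16\theta_T$ and $\theta_T$ small. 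This yields $\Emb[e^{16\theta_T \Ztil}] \le 1 + C\Delta^2 \le 1 + C\Delta$. For $Y = \Wtil$: Burkholder--Davis--Gundy gives $\Emb \Wtil^2 \le C\Delta^2$, and the Gaussian tail bound $\Pmb(\sup_{r} |W_u(r)| > \rho) \le C_d e^{-c\rho^2/\Delta}$ implies $\Emb e^{2\lambda \Wtil}$ is bounded as soon as $2\lambda < c/T$, which again holds for $\theta_T$ chosen small enough. This yields $\Emb[e^{16\theta_T \Wtil}] \le 1 + C\Delta$. The main subtlety is selecting $\theta_T$ uniformly for all $\Delta \in [0,T]$ so that both exponential moments remain finite; this forces $\theta_T = O(1/T)$, and with this choice the Cauchy--Schwarz combination $(1+C\Delta)^{1/2}(1+C\Delta)^{1/2} \le 1 + C\Delta$ completes the argument.
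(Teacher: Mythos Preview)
Your proof of the first inequality is essentially identical to the paper's, and is correct.

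For the second inequality your route is genuinely different from the paper's and is correct, but it is worth recording the contrast. The paper, following \cite{BolleyGuillinVillani2007quantitative}, applies It\^o's formula directly to the process $Z_u(r) = \exp\bigl(\theta(r)\,|\Xlimit_u(r)-\Xlimit_u(t_0)|^2\bigr)$ with a \emph{time-dependent} exponent $\theta(r)$; the function $\theta$ is then chosen to solve the ODE $\eta\theta + 2\theta^2 + \theta' = 0$, which annihilates the dangerous term $|\Xlimit_u(r)-\Xlimit_u(t_0)|^2$ in the drift of $Z_u$. What remains is a martingale whose supremum is controlled by Doob and a benign drift bounded via Lemma~\ref{lem:square-exponential-moment}(i). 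Your argument instead splits the increment into its deterministic-integral part $A_u$ and its Brownian part $W_u$, bounds $\sup_r|A_u(r)|$ pathwise by $C\Delta\bigl(1+\sup_{[0,T]}|\Xlimit_u|\bigr)$, and then uses the elementary inequality $e^{\lambda Y}-1\le \lambda Y e^{\lambda Y}$ together with Cauchy--Schwarz and Lemma~\ref{lem:square-exponential-moment}(i) on each piece. This avoids It\^o's formula and the ODE trick entirely, at the cost of leaning more heavily on the a~priori exponential bound of Lemma~\ref{lem:square-exponential-moment}(i) (which the paper's proof also invokes, but only to justify integrability). Two small points to tidy: the fourth-moment bound $\Emb\bigl[\sup_{[0,T]}|\Xlimit_u|^4\bigr]<\infty$ you use for $\Emb\Ztil^2$ is not literally Proposition~\ref{prop:well-posedness}(i) (which has the sup outside) but follows from Lemma~\ref{lem:square-exponential-moment}(i); and the final AM--GM step should be stated as $\sqrt{(1+C_1\Delta)(1+C_2\Delta)}\le 1+\tfrac{C_1+C_2}{2}\Delta$ since the two constants need not coincide.
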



\begin{Lemma}
	\label{lem:regularity-n}
	For every $T \in (0,\infty)$, there exist some $C(T), \theta_T \in (0,\infty)$ such that
	$$\Pmb \left(\sup_{t_0 \le s,t \le t_0+\Delta} W_1(\emplimit^n_s, \emplimit^n_t) > \varepsilon\right) \le \exp \left( -n \left[ \theta_T\varepsilon^2-C(T)\Delta\right] \right)$$
	for all $t_0 \in [0,T]$, $\Delta \in [0,T]$ and $\varepsilon \in (0,\infty)$.
\end{Lemma}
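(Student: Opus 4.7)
The plan is to reduce the oscillation of $W_1(\emplimit^n_s,\emplimit^n_t)$ to an average of per-particle oscillations, and then apply a Chernoff/Markov exponential estimate that uses the square-exponential moment bound from Lemma \ref{lem:regularity-moment}.

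First I would use the crude coupling that pairs index $i$ in $\emplimit^n_s$ with index $i$ in $\emplimit^n_t$, which gives
\begin{equation*}
	W_1(\emplimit^n_s,\emplimit^n_t) \le \frac{1}{n}\sum_{i=1}^n |\Xlimit_{i/n}(s)-\Xlimit_{i/n}(t)|.
\end{equation*}
Taking the supremum over $s,t\in[t_0,t_0+\Delta]$ and pulling the sup inside the sum yields
\begin{equation*}
	\sup_{t_0\le s,t\le t_0+\Delta} W_1(\emplimit^n_s,\emplimit^n_t) \le \frac{1}{n}\sum_{i=1}^n Y_i,
	\qquad Y_i := \sup_{t_0\le s\le t\le t_0+\Delta} |\Xlimit_{i/n}(t)-\Xlimit_{i/n}(s)|.
\end{equation*}
By the independence assumption on $\{(\Xlimit_u(0),B_u):u\in I\}$, the random variables $Y_1,\dotsc,Y_n$ are independent, and by Lemma \ref{lem:regularity-moment} each satisfies
\begin{equation*}
	\Emb \bigl[ e^{\theta_T Y_i^2} \bigr] \le 1+C(T)\Delta.
\end{equation*}

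Next I would apply a standard Chernoff argument. For any $\lambda>0$, using the elementary inequality $\lambda y \le \theta_T y^2 + \lambda^2/(4\theta_T)$, we get
\begin{equation*}
	\Emb[e^{\lambda Y_i}] \le e^{\lambda^2/(4\theta_T)}\Emb[e^{\theta_T Y_i^2}] \le \exp\bigl(\lambda^2/(4\theta_T)+C(T)\Delta\bigr),
\end{equation*}
where the last step uses $1+x\le e^x$. By Markov's inequality and independence,
\begin{equation*}
	\Pmb\Bigl(\tfrac{1}{n}\sum_{i=1}^n Y_i > \varepsilon\Bigr) \le e^{-n\lambda\varepsilon}\prod_{i=1}^n \Emb[e^{\lambda Y_i}] \le \exp\bigl(-n\lambda\varepsilon + n\lambda^2/(4\theta_T) + nC(T)\Delta\bigr).
\end{equation*}
Optimizing in $\lambda$ by choosing $\lambda = 2\theta_T\varepsilon$ gives $-n\lambda\varepsilon + n\lambda^2/(4\theta_T) = -n\theta_T\varepsilon^2$, so
\begin{equation*}
	\Pmb\Bigl(\tfrac{1}{n}\sum_{i=1}^n Y_i > \varepsilon\Bigr) \le \exp\bigl(-n[\theta_T\varepsilon^2 - C(T)\Delta]\bigr),
\end{equation*}
and combining with the reduction above yields the claimed bound (possibly relabeling $\theta_T$ and $C(T)$).

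There is no real obstacle here: the only conceptual step is the synchronous coupling that avoids solving any actual optimal-transport problem, after which the argument is a textbook sub-Gaussian tail bound. The heavy lifting was done in Lemma \ref{lem:regularity-moment}, which provides the uniform (in $u$) square-exponential control of the path increments needed to make all $n$ particles behave as if they were i.i.d.\ for the purposes of this concentration inequality.
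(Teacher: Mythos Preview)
Your proof is correct and follows essentially the same route as the paper: the synchronous coupling bound $W_1(\emplimit^n_s,\emplimit^n_t)\le \tfrac{1}{n}\sum_i |\Xlimit_{i/n}(t)-\Xlimit_{i/n}(s)|$, followed by a Chernoff/Markov bound exploiting independence and the square-exponential increment control from Lemma~\ref{lem:regularity-moment}, with the same Young-inequality step $\lambda y\le \theta_T y^2+\lambda^2/(4\theta_T)$ and the same optimal choice $\lambda=2\theta_T\varepsilon$. The only cosmetic difference is that the paper writes the Chernoff bound as a supremum over $\theta$ and uses $\log(1+C\Delta)\le C\Delta$ at the end, whereas you apply $1+x\le e^x$ earlier; the resulting inequality is identical.
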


The following result is an immediate consequence of Lemma \ref{lem:regularity-moment}.

\begin{Lemma}
	\label{lem:regularity}
	For every $T \in (0,\infty)$, there exists some $C(T) \in (0,\infty)$ such that
	$$W_1(\lawlimitmean_t, \lawlimitmean_s) \le C(T)|t-s|^{1/2}, \quad \forall s,t \in [0,T].$$
\end{Lemma}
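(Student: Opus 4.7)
The plan is to build a coupling of $\lawlimitmean_s$ and $\lawlimitmean_t$ by averaging the natural ``synchronous'' coupling $(\Xlimit_u(s),\Xlimit_u(t))$ over $u \in I$. Since $\lawlimitmean_t = \int_I \lawlimit_{u,t}\,du$ by definition, the measure
$$\pi(dx\,dy) := \int_I \Lmc(\Xlimit_u(s),\Xlimit_u(t))(dx\,dy)\,du$$
is a probability measure on $\Rd \times \Rd$ whose first and second marginals are exactly $\lawlimitmean_s$ and $\lawlimitmean_t$. Here the measurability in $u$ needed to make sense of the integral is provided by Proposition \ref{prop:well-posedness}(i).

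Using $\pi$ as a candidate coupling in the definition of $W_1$ gives
$$W_1(\lawlimitmean_s,\lawlimitmean_t) \le \int_{\Rd \times \Rd} |x-y|\,\pi(dx\,dy) = \int_I \Emb|\Xlimit_u(t)-\Xlimit_u(s)|\,du.$$
I would then apply Jensen's inequality (or H\"older) to pass from the $L^1$-moment to the $L^4$-moment already controlled by Lemma \ref{lem:regularity-moment}:
$$\Emb|\Xlimit_u(t)-\Xlimit_u(s)| \le \bigl(\Emb|\Xlimit_u(t)-\Xlimit_u(s)|^4\bigr)^{1/4} \le \bigl(C(T)|t-s|^2\bigr)^{1/4} = C(T)^{1/4}|t-s|^{1/2}.$$
Since this estimate is uniform in $u \in I$, integrating over $u$ preserves the bound and yields $W_1(\lawlimitmean_s,\lawlimitmean_t) \le C(T)^{1/4}|t-s|^{1/2}$, which after renaming the constant is exactly the claimed inequality.

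There is no real obstacle here; the only subtle point is checking that $\pi$ is a legitimate measure with the correct marginals, which reduces to Fubini applied to bounded continuous test functions together with the measurability in $u$ of $\lawlimit_u$ supplied by Proposition \ref{prop:well-posedness}(i). Everything else is a one-line moment bound from Lemma \ref{lem:regularity-moment}, which is why the statement is flagged as an immediate consequence.
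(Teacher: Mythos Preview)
Your proof is correct and is exactly the natural argument the paper has in mind: the paper gives no proof and simply flags the lemma as an immediate consequence of Lemma \ref{lem:regularity-moment}, and the averaged synchronous coupling together with Jensen's inequality is the obvious way to extract that consequence.
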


\subsection{Bounds in terms of empirical measures of independent variables}
\label{sec:transfer}

The following proposition is a generalization of \cite[Proposition 6.1]{BolleyGuillinVillani2007quantitative} and reduces the analysis of $W_1(\emppre^n_t, \lawlimitmean_t)$ to quantities only in terms of independent processes.

\begin{Proposition}
	\label{prop:prep}
	Let
	\begin{equation}
		\label{eq:Rmctil}		
		\Rmc^{n,i}_t:= \frac{1}{n} \sum_{j=1}^n \left( \xi_{ij}^n b(\Xlimit_{\frac{i}{n}}(t),\Xlimit_{\frac{j}{n}}(t)) - \int_{\Rmb^d} b(\Xlimit_{\frac{i}{n}}(t),x)G(\frac{i}{n},\frac{j}{n}) \,\lawlimit_{\frac{j}{n},t}(dx) \right).
	\end{equation}
	\begin{enumerate}[(i)]
	\item 
		For every $T \in (0,\infty)$, there exists some $C(T) \in (0,\infty)$ such that
		\begin{equation*}
			W_1(\emppre^n_t, \lawlimitmean_t) \le W_1(\emplimit^n_t, \lawlimitmean_t) + C(T) \left( \int_0^t e^{-\kappa (t-s)} \frac{1}{n} \sum_{i=1}^n \left| |\Rmc_s^{n,i}| + \frac{1}{n} \left( 1+|\Xlimit_{\frac{i}{n}}(s)| \right) \right|^2 ds \right)^{1/2}
		\end{equation*}
		for all $t \in [0,T]$.
	\item
		Suppose Condition \ref{cond:strong} holds. 
		Then there exists some $C \in (0,\infty)$, independent of $t \ge 0$, such that
		\begin{equation*}
			W_1(\emppre^n_t, \lawlimitmean_t) \le W_1(\emplimit^n_t, \lawlimitmean_t) + C \left( \int_0^t e^{-\kappa (t-s)} \frac{1}{n} \sum_{i=1}^n \left| |\Rmc_s^{n,i}| + \frac{1}{n} \left( 1+|\Xlimit_{\frac{i}{n}}(s)| \right) \right|^2 ds \right)^{1/2}
		\end{equation*}
		for all $t \ge 0$.
	\end{enumerate}	
\end{Proposition}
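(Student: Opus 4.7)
I would start with the synchronous coupling $Z_i(t):=X_i^n(t)-\Xlimit_{i/n}(t)$, which satisfies $Z_i(0)=0$ and has no Brownian component because $X_i^n$ and $\Xlimit_{i/n}$ are driven by the same $B_{i/n}$. Using the empirical coupling $\tfrac{1}{n}\sum_i\delta_{X_i^n(t)}\otimes\delta_{\Xlimit_{i/n}(t)}$ of $\emppre^n_t$ and $\emplimit^n_t$, the triangle inequality reduces the problem to bounding $W_1(\emppre^n_t,\emplimit^n_t)\le\tfrac{1}{n}\sum_i|Z_i(t)|\le\sqrt{S_n(t)}$ where $S_n(t):=\tfrac{1}{n}\sum_i|Z_i(t)|^2$. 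Everything thus boils down to a Gronwall estimate for $S_n$.

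Decomposing the drift difference into a Lipschitz term $\tfrac{1}{n}\sum_j\xi_{ij}^n[b(X_i^n,X_j^n)-b(\Xlimit_{i/n},\Xlimit_{j/n})]$, the fluctuation $\Rmc^{n,i}_s$, and the Riemann-sum residual
\begin{equation*}
\mathrm{disc}_i(s):=\tfrac{1}{n}\sum_j\!\int b(\Xlimit_{i/n},x)G(\tfrac{i}{n},\tfrac{j}{n})\lawlimit_{j/n,s}(dx)-\int_0^1\!\!\int b(\Xlimit_{i/n},x)G(\tfrac{i}{n},v)\lawlimit_{v,s}(dx)\,dv,
\end{equation*}
and applying \eqref{eq:c0} for $f$ together with the Lipschitz property of $b$ and $\xi_{ij}^n\le 1$, I would obtain $\tfrac{d}{dt}|Z_i|^2\le-2c_0|Z_i|^2+2K_b|Z_i|^2+2K_b|Z_i|\cdot\tfrac{1}{n}\sum_j|Z_j|+2|Z_i|(|\Rmc^{n,i}_t|+|\mathrm{disc}_i(t)|)$. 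Averaging over $i$ and using Cauchy--Schwarz $(\tfrac{1}{n}\sum_j|Z_j|)^2\le S_n$ (with no $\eps$-slack), this collapses to $\tfrac{d}{dt}S_n\le -2\kappa S_n+2\sqrt{S_n\,R(t)}$ with $R(t):=\tfrac{1}{n}\sum_i(|\Rmc^{n,i}_t|+|\mathrm{disc}_i(t)|)^2$. Passing to $\sqrt{S_n}$ yields $\tfrac{d}{dt}\sqrt{S_n}\le-\kappa\sqrt{S_n}+\sqrt{R}$ (with the standard $S_n+\delta$ smoothing to handle $S_n=0$), and Gronwall followed by one more Cauchy--Schwarz in time gives
\begin{equation*}
\sqrt{S_n(t)}\le\bigl(\textstyle\int_0^t e^{-\kappa(t-s)}ds\bigr)^{1/2}\bigl(\int_0^t e^{-\kappa(t-s)}R(s)\,ds\bigr)^{1/2},
\end{equation*}
whose first factor is bounded by some $C(T)$ on $[0,T]$ regardless of the sign of $\kappa$ in case (i), and by $1/\sqrt{\kappa}$ uniformly in $t$ in case (ii).

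It remains to bound the discretization error by $|\mathrm{disc}_i(t)|\le\tfrac{C}{n}(1+|\Xlimit_{i/n}(t)|)$. For each fixed $i$, on each piece $I_k$ of the partition the integrand $v\mapsto\int b(\Xlimit_{i/n}(t),x)G(\tfrac{i}{n},v)\lawlimit_{v,t}(dx)$ is Lipschitz in $v$ with constant proportional to $1+|\Xlimit_{i/n}(t)|$: the Lipschitz bound on $G$ is from the standing assumption; Kantorovich duality turns differences of $\lawlimit_{v,t}$-integrals of the $K_b$-Lipschitz function $b(\Xlimit_{i/n},\cdot)$ into $K_b W_1(\lawlimit_{v_1,t},\lawlimit_{v_2,t})$; and Lemma~\ref{lem:Lipschitz} in case (i) (uniform on $[0,T]$) or Lemma~\ref{lem:Lipschitz-special} in case (ii) (uniform in $t$) supplies the piecewise Lipschitzness of $v\mapsto\lawlimit_{v,t}$. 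The linear-growth factor $1+|\Xlimit_{i/n}|$ arises from $b(\Xlimit_{i/n},x)$ combined with the uniform first-moment bound on $\lawlimit_{v,t}$, and the standard Riemann-sum error on each of the $N$ smooth pieces, together with $O(N/n)$ boundary contributions, delivers the claimed $O(1/n)$ bound (absorbed into the outer $C(T)$ or $C$).

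The main obstacle is keeping the dissipation rate in (ii) honestly $\kappa$ rather than $\kappa-\eps$: this forces the use of Cauchy--Schwarz $(\tfrac{1}{n}\sum|Z_j|)^2\le S_n$ instead of Young's inequality when converting the $\ell^1$-average into an $\ell^2$-average, and requires the smoothing to $\sqrt{S_n}$ to be done with no slack so the $-\kappa$ factor survives intact; in case (i) any such slack would simply be absorbed into $C(T)$.
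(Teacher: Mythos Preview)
Your proposal is correct and follows essentially the same route as the paper: the same synchronous coupling, the same three-way split of the drift difference (Lipschitz part, $\Rmc^{n,i}$, Riemann-sum residual), the same passage from $\tfrac{d}{dt}S_n\le-2\kappa S_n+2\sqrt{S_n R}$ to $\tfrac{d}{dt}\sqrt{S_n}\le-\kappa\sqrt{S_n}+\sqrt{R}$, and the same Cauchy--Schwarz in time to reach the stated form. Your treatment of $\mathrm{disc}_i$ via piecewise Lipschitzness of $v\mapsto\lawlimit_{v,t}$ (Lemma~\ref{lem:Lipschitz} for (i), Lemma~\ref{lem:Lipschitz-special} for (ii)) is exactly what the paper does, and your remark about preserving $\kappa$ rather than $\kappa-\eps$ via $(\tfrac{1}{n}\sum|Z_j|)^2\le S_n$ matches the paper's computation.
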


\begin{proof}
	We first prove part (ii).
	From \eqref{eq:system} and \eqref{eq:system-n} we have
	\begin{align*}
		& d\left| X_i^n(t)-\Xlimit_{\frac{i}{n}}(t) \right|^2 \\
		& = 2\left( X_i^n(t)-\Xlimit_{\frac{i}{n}}(t) \right) \cdot \left( f(X_i^n(t))-f(\Xlimit_{\frac{i}{n}}(t)) \right) dt \\
		& \quad + 2\left( X_i^n(t)-\Xlimit_{\frac{i}{n}}(t) \right) \cdot \left( \frac{1}{n} \sum_{j=1}^n \xi_{ij}^n b(X_i^n(t),X_j^n(t)) - \int_I \int_{\Rmb^d} b(\Xlimit_{\frac{i}{n}}(t),x)G(\frac{i}{n},v)\,\lawlimit_{v,t}(dx)\,dv \right) dt.
	\end{align*}
	By \eqref{eq:c0},
	\begin{align*}
		\left( X_i^n(t)-\Xlimit_{\frac{i}{n}}(t) \right) \cdot \left( f(X_i^n(t))-f(\Xlimit_{\frac{i}{n}}(t)) \right) \le -c_0 \left|X_i^n(t)-\Xlimit_{\frac{i}{n}}(t)\right|^2.
	\end{align*}
	By adding and subtracting terms, we have
	\begin{align*}
		& \left( X_i^n(t)-\Xlimit_{\frac{i}{n}}(t) \right) \cdot \left( \frac{1}{n} \sum_{j=1}^n \xi_{ij}^n b(X_i^n(t),X_j^n(t)) - \int_I \int_{\Rmb^d} b(\Xlimit_{\frac{i}{n}}(t),x)G(\frac{i}{n},v)\,\lawlimit_{v,t}(dx)\,dv \right) \\
		& = \left( X_i^n(t)-\Xlimit_{\frac{i}{n}}(t) \right) \cdot \left[ \frac{1}{n} \sum_{j=1}^n \xi_{ij}^n \left( b(X_i^n(t),X_j^n(t)) - b(\Xlimit_{\frac{i}{n}}(t),\Xlimit_{\frac{j}{n}}(t)) \right) \right] \\
		& \quad + \left( X_i^n(t)-\Xlimit_{\frac{i}{n}}(t) \right) \cdot \left[ \frac{1}{n} \sum_{j=1}^n \left( \xi_{ij}^n b(\Xlimit_{\frac{i}{n}}(t),\Xlimit_{\frac{j}{n}}(t)) - \int_{\Rmb^d} b(\Xlimit_{\frac{i}{n}}(t),x)G(\frac{i}{n},\frac{j}{n}) \,\lawlimit_{\frac{j}{n},t}(dx) \right) \right] \\
		& \quad + \left( X_i^n(t)-\Xlimit_{\frac{i}{n}}(t) \right) \cdot \left( \frac{1}{n} \sum_{j=1}^n \int_{\Rmb^d} b(\Xlimit_{\frac{i}{n}}(t),x)G(\frac{i}{n},\frac{j}{n}) \,\lawlimit_{\frac{j}{n},t}(dx) - \int_I \int_{\Rmb^d} b(\Xlimit_{\frac{i}{n}}(t),x)G(\frac{i}{n},v)\,\lawlimit_{v,t}(dx)\,dv \right) \\
		& =: \Rmctil_t^{n,i,1} + \Rmctil_t^{n,i,2} + \Rmctil_t^{n,i,3}.
	\end{align*}
	For $\Rmctil_t^{n,i,1}$, using the Lipschitz property of $b$, we have
	\begin{align*}
		\Rmctil_t^{n,i,1} \le \frac{K_b}{n} \sum_{j=1}^n \left| X_i^n(t)-\Xlimit_{\frac{i}{n}}(t) \right| \left( \left| X_i^n(t)-\Xlimit_{\frac{i}{n}}(t) \right| + \left| X_j^n(t)-\Xlimit_{\frac{j}{n}}(t) \right| \right).
	\end{align*}
	For $\Rmctil_t^{n,i,2}$, from \eqref{eq:Rmctil} we have
	\begin{equation*}
		\Rmctil_t^{n,i,2} \le \left| X_i^n(t)-\Xlimit_{\frac{i}{n}}(t) \right| |\Rmc_s^{n,i}|.
	\end{equation*}
	For $\Rmctil_t^{n,i,3}$, using Lemma \ref{lem:Lipschitz-special}, we have
	\begin{align*}
		\Rmctil_t^{n,i,3} & = \left( X_i^n(t)-\Xlimit_{\frac{i}{n}}(t) \right) \cdot \int_I \int_{\Rmb^d} b(\Xlimit_{\frac{i}{n}}(t),x) \left( G(\frac{i}{n},\frac{\lceil nv \rceil}{n}) \,\lawlimit_{\frac{\lceil nv \rceil}{n},t}(dx) - G(\frac{i}{n},v)\,\lawlimit_{v,t}(dx) \right) dv \\
		& \le \frac{C_1}{n} |X_i^n(t)-\Xlimit_{\frac{i}{n}}(t)| \left( 1+|\Xlimit_{\frac{i}{n}}(t)| \right).
	\end{align*}
	
	Taking the average over $i$ and using \eqref{eq:kappa}, we get
	\begin{align*}
		\frac{d}{dt} \frac{1}{n} \sum_{i=1}^n \left| X_i^n(t)-\Xlimit_{\frac{i}{n}}(t) \right|^2 & \le -2\kappa \frac{1}{n} \sum_{i=1}^n \left| X_i^n(t)-\Xlimit_{\frac{i}{n}}(t) \right|^2 \\
		& \quad + 2 \left( \frac{1}{n} \sum_{i=1}^n \left| X_i^n(t)-\Xlimit_{\frac{i}{n}}(t) \right|^2 \right)^{1/2} \left( \frac{1}{n} \sum_{i=1}^n \left| |\Rmc_t^{n,i}| + \frac{C_1}{n} \left( 1+|\Xlimit_{\frac{i}{n}}(t)| \right) \right|^2 \right)^{1/2}.
	\end{align*}
	So
	\begin{align*}
		& \frac{d}{dt} \left( \frac{1}{n} \sum_{i=1}^n \left| X_i^n(t)-\Xlimit_{\frac{i}{n}}(t) \right|^2 \right)^{1/2} \\
		& = \frac{1}{2} \left( \frac{1}{n} \sum_{i=1}^n \left| X_i^n(t)-\Xlimit_{\frac{i}{n}}(t) \right|^2 \right)^{-1/2} \frac{d}{dt} \frac{1}{n} \sum_{i=1}^n \left| X_i^n(t)-\Xlimit_{\frac{i}{n}}(t) \right|^2 \\
		& \le -\kappa \left( \frac{1}{n} \sum_{i=1}^n \left| X_i^n(t)-\Xlimit_{\frac{i}{n}}(t) \right|^2 \right)^{1/2} + \left( \frac{1}{n} \sum_{i=1}^n \left| |\Rmc_t^{n,i}| + \frac{C_1}{n} \left( 1+|\Xlimit_{\frac{i}{n}}(t)| \right) \right|^2 \right)^{1/2}.
	\end{align*}	
	From this we have 
	\begin{align}
		\left( \frac{1}{n} \sum_{i=1}^n \left| X_i^n(t)-\Xlimit_{\frac{i}{n}}(t) \right|^2 \right)^{1/2} & \le \int_0^t e^{-\kappa (t-s)} \left( \frac{1}{n} \sum_{i=1}^n \left| |\Rmc_s^{n,i}| + \frac{C_1}{n} \left( 1+|\Xlimit_{\frac{i}{n}}(s)| \right) \right|^2 \right)^{1/2} ds \notag \\
		& \le C_2 \left( \int_0^t e^{-\kappa (t-s)} \frac{1}{n} \sum_{i=1}^n \left| |\Rmc_s^{n,i}| + \frac{1}{n} \left( 1+|\Xlimit_{\frac{i}{n}}(s)| \right) \right|^2 ds \right)^{1/2}, \label{eq:prep-pf-1}
	\end{align}
	where the last line follows from Holder's inequality and Condition \ref{cond:strong}. 
	Noting that
	\begin{equation*}
		W_1(\emppre^n_t, \emplimit^n_t) \le W_2(\emppre^n_t, \emplimit^n_t) \le \left( \frac{1}{n} \sum_{i=1}^n \left| X_i^n(t)-\Xlimit_{\frac{i}{n}}(t) \right|^2 \right)^{1/2}
	\end{equation*}
	and
	\begin{equation*}
		W_1(\emppre^n_t, \lawlimitmean_t) \le W_1(\emplimit^n_t, \lawlimitmean_t) + W_1(\emppre^n_t, \emplimit^n_t),
	\end{equation*}
	we have the desired result for part (ii).
	
	The proof of part (i) is the same as that of part (ii), except that Lemma \ref{lem:Lipschitz-special} and all the relevant constants (such as $C_1$) are to be replaced by Lemma \ref{lem:Lipschitz} and $T$-dependent constants (such as $C_1(T)$).
\end{proof}

\subsection{Exponential estimates}
\label{sec:key}

Recall $\Rmc^{n,i}_t$ introduced in \eqref{eq:Rmctil}.
In this section we will provide an exponential bound for the probability
\begin{equation}
	\label{eq:pn}
	p(n,t,\varepsilon) := \Pmb\left( \int_0^t e^{-\kappa (t-s)} \frac{1}{n} \sum_{i=1}^n \left| |\Rmc_s^{n,i}| + \frac{1}{n} \left( 1+|\Xlimit_{\frac{i}{n}}(s)| \right) \right|^2 ds > \varepsilon \right)
\end{equation}
for $t \ge 0 $ and $\varepsilon \in (0,\infty)$.

\begin{Proposition}
	\phantomsection
	\label{prop:p-n-eps}
	\begin{enumerate}[(i)]
	\item 
		There exist some $C(T), K_T \in (0,\infty)$ and $N_T \in \Nmb$ such that
		\begin{equation}
			\label{eq:pnt}
			\sup_{t \in [0,T]} p(n,t,\varepsilon) \le C(T)e^{-K_T \sqrt{n\varepsilon}}
		\end{equation}
		for all $\varepsilon > 0$ and $n \ge N_T /\sqrt{\varepsilon}$.
		If in addition \eqref{eq:condition-special} holds, then $\sqrt{n\varepsilon}$ in \eqref{eq:pnt} could be improved to $n\varepsilon$.
	\item 
		Suppose Condition \ref{cond:strong} holds.
		Then there exist some $C,K \in (0,\infty)$ and $N_0 \in \Nmb$ such that
		\begin{equation}
			\label{eq:pnt-special}
			\sup_{t \ge 0} p(n,t,\varepsilon) \le Ce^{-K \sqrt{n\varepsilon}}
		\end{equation}
		for all $\varepsilon > 0$ and $n \ge N_0 /\sqrt{\varepsilon}$.
		If in addition \eqref{eq:condition-special} holds, then $\sqrt{n\varepsilon}$ in \eqref{eq:pnt-special} could be improved to $n\varepsilon$.
	\end{enumerate}

\end{Proposition}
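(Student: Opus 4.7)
The plan is to apply an exponential Markov inequality to reduce $p(n,t,\varepsilon)$ to controlling, uniformly in $s$, the exponential moments of the integrand in \eqref{eq:pn}. First I would split the integrand pointwise via $(a+b)^2 \le 2a^2 + 2b^2$ as $2 A_s^n + 2 B_s^n$, where $A_s^n := \frac{1}{n}\sum_{i=1}^n |\Rmc_s^{n,i}|^2$ and $B_s^n := \frac{1}{n^3}\sum_{i=1}^n (1+|\Xlimit_{i/n}(s)|)^2$, and union-bound over the two corresponding events. For each discounted integral $Y_t := \int_0^t e^{-\kappa(t-s)} X_s\,ds$ with $X_s \ge 0$, I would use the probability measure $\nu_t(ds) := \frac{\kappa e^{-\kappa(t-s)}}{1-e^{-\kappa t}}\mathds{1}_{[0,t]}(s)\,ds$ when $\kappa>0$, and otherwise work against the flat measure $ds/t$ on $[0,t]$, absorbing a factor $t e^{|\kappa|T}$ into $C(T)$. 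Jensen's inequality then gives $\Emb[\exp(\lambda Y_t)] \le \sup_{s \in [0,t]}\Emb[\exp(\lambda\kappa^{-1} X_s)]$, so combined with Markov the task reduces to bounding $\sup_s \Emb[\exp(\mu A_s^n)]$ and $\sup_s \Emb[\exp(\mu B_s^n)]$ for well-chosen $\mu>0$, over $s \in [0,T]$ in part (i) and over $s \ge 0$ in part (ii).

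The $B_s^n$ contribution is handled directly by Lemma \ref{lem:square-exponential-moment}: since $\{\Xlimit_{i/n}(s)\}_i$ are independent with uniform square-exponential moments (on $[0,T]$ for part (i); uniformly in $s \ge 0$ under Condition \ref{cond:strong} for part (ii)), choosing $\mu$ up to order $n^2$ keeps $\mu/n^3$ inside the admissible range for each factor, and independence across $i$ yields $\sup_s \Emb[\exp(\mu B_s^n)] \le \exp(C\mu/n^2)$. This produces a Markov bound of order $\exp(-c n^2 \varepsilon)$, which is much smaller than the target rates on the relevant regime $n \ge N_0/\sqrt{\varepsilon}$.

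The heavy lifting is for $A_s^n$, and this is precisely the role of the key Lemmas \ref{lem:Tmctil}, \ref{lem:Tmc} and \ref{lem:pn2}. Conditional on $\Xlimit_{i/n}(s)$, $\Rmc_s^{n,i}$ is a scaled sum of $n$ centered independent (across $j$) random variables of the form $\xi_{ij}^n b(\Xlimit_{i/n}, \Xlimit_{j/n}) - \Emb[\cdot \mid \Xlimit_{i/n}]$, with independence coming from $\{\Xlimit_{j/n}\}_j$ and (in the Bernoulli case) also from $\{\xi_{ij}^n\}_j$. Under \eqref{eq:condition-special} each centered summand is sub-Gaussian --- either because $\xi_{ij}^n$ is deterministic and the only randomness enters through Lipschitz functions of sub-Gaussian variables, or because $b$ is bounded --- so $|\Rmc_s^{n,i}|^2$ has sub-exponential tails of order $(1+|\Xlimit_{i/n}|)^2/n$. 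Combining this (Lemma \ref{lem:Tmctil}) with the sub-Gaussian control of $|\Xlimit_{i/n}|^2$ from Lemma \ref{lem:square-exponential-moment} would yield $\sup_s \Emb[\exp(\mu A_s^n)] \le \exp(C\mu/n)$ for $\mu$ up to order $n$, and optimizing $\mu$ in Markov gives the improved rate $e^{-K n\varepsilon}$. Without \eqref{eq:condition-special}, the summands are products of an unbounded Bernoulli perturbation with an unbounded Lipschitz function, so only a weaker sub-exponential bound on $|\Rmc_s^{n,i}|^2$ is available (Lemma \ref{lem:Tmc}); this degrades the tails by a square root and produces the rate $e^{-K\sqrt{n\varepsilon}}$.

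The main obstacle is thus this sub-Gaussian versus sub-exponential analysis of $\Rmc_s^{n,i}$ when $b$ is only Lipschitz and the graph weights may be random, and it is exactly what the three auxiliary lemmas are designed to handle. The role of Condition \ref{cond:strong} in part (ii) is limited to making Lemma \ref{lem:square-exponential-moment}(ii) and the associated exponential moment bounds on $|\Rmc_s^{n,i}|$ uniform in $s \ge 0$; no time-discretization is needed in this approach, which is what allows the pre-exponential constant in \eqref{eq:pnt-special} to be independent of $\varepsilon$, as advertised in Remark \ref{rmk:improvement}.
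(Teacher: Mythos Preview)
Your overall architecture (exponential Markov, Jensen on the discounted time integral, then control pointwise exponential moments) is sound and matches the paper's spirit, but the execution differs from the paper in one structural way and has a genuine gap in the general case.

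\textbf{Structural difference.} The paper does \emph{not} try to bound $\Emb[\exp(\mu A_s^n)]$ for the full average $A_s^n=\frac{1}{n}\sum_i|\Rmc_s^{n,i}|^2$. Instead it applies a union bound over $i$ right away,
\[
p(n,t,\varepsilon)\le \sum_{i=1}^n \Pmb\Big(\int_0^t e^{-\kappa(t-s)}\big||\Rmc_s^{n,i}|+\tfrac{1}{n}(1+|\Xlimit_{i/n}(s)|)\big|^2\,ds>\varepsilon\Big),
\]
and analyzes each $i$ separately. This costs a harmless factor $n$ but completely avoids the dependence of $\{|\Rmc_s^{n,i}|^2\}_i$ across $i$ (they all share the variables $\{\Xlimit_{j/n}\}_j$). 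Your plan never addresses this dependence; you would need an extra H\"older step to reduce to single-$i$ moments, which you do not mention.

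\textbf{The real gap: the $\sqrt{n\varepsilon}$ rate.} In the general case (without \eqref{eq:condition-special}) your scheme of bounding $\Emb[\exp(\mu|\Rmc_s^{n,i}|^2)]$ cannot deliver the claimed rate. Decompose $\Rmc_s^{n,i}=\Tmctil_s^{n,i}+\Tmc_s^{n,i}$ as in \eqref{eq:Tmctil}--\eqref{eq:Tmc}. For the $\Tmc$ part Lemma~\ref{lem:Tmc} indeed gives $\Emb[\exp(\mu|\Tmc_s^{n,i}|^2)]\le C$ for $\mu$ up to order $n$, which yields the $n\varepsilon$ rate (Lemma~\ref{lem:pn2}). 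But $\Tmctil_s^{n,i}=\eta_i\,b(\Xlimit_{i/n}(s),0)$ with $\eta_i=\frac{1}{n}\sum_j(\xi_{ij}^n-G(\tfrac in,\tfrac jn))$, and when $b(\cdot,0)$ is unbounded the product $|\eta_i|^2|b(\Xlimit_{i/n},0)|^2$ has exponential moments only for $\mu$ of order~$1$: the tail of $|\Tmctil|^2$ is governed by $\min(e^{-ct},e^{-c\sqrt{nt}})$, so $\Emb[e^{\mu|\Tmctil|^2}]$ is finite only for $\mu<c$, yielding a useless $e^{-c\varepsilon}$ rate from Markov. The paper's remedy (Lemma~\ref{lem:Tmctil}) is to apply Markov to the \emph{first} power $|\eta_i|\sqrt{\int e^{-\kappa(t-s)}|b(\Xlimit_{i/n}(s),0)|^2\,ds}$, then use Hoeffding on $\eta_i$ conditionally on $\Xlimit_{i/n}$ to pass to a square; optimizing gives $\theta\sim\sqrt{n}$ and the rate $e^{-K\sqrt{n\varepsilon}}$. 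This step is not reachable from your plan of taking exponential moments of $|\Rmc|^2$ directly. Relatedly, you have the roles of the lemmas reversed: the degradation to $\sqrt{n\varepsilon}$ comes from the $\Tmctil$ part (Lemma~\ref{lem:Tmctil}), while the $\Tmc$ part (Lemmas~\ref{lem:Tmc}--\ref{lem:pn2}) always gives $n\varepsilon$; and \eqref{eq:condition-special} says $b(\cdot,0)$ is bounded, not that $b$ is bounded.
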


\begin{proof}
	We first prove part (ii).
	Using the union bound, we have
	\begin{equation*}
		p(n,t,\varepsilon) \le \sum_{i=1}^n \Pmb\left( \int_0^t e^{-\kappa  (t-s)} \left| |\Rmc_s^{n,i}| + \frac{1}{n} \left( 1+|\Xlimit_{\frac{i}{n}}(s)| \right) \right|^2 ds > \varepsilon \right).
	\end{equation*}
	Now we fix $i \in \Nmb$.
	Writing $\Rmc_s^{n,i}=\Tmctil_s^{n,i}+\Tmc_s^{n,i}$, where
	\begin{align}
		\Tmctil_s^{n,i} & := \frac{1}{n} \sum_{j=1}^n \left( \xi_{ij}^n - G(\frac{i}{n},\frac{j}{n}) \right) b(\Xlimit_{\frac{i}{n}}(s),0), \label{eq:Tmctil} \\
		\Tmc_s^{n,i} & := \frac{1}{n} \sum_{j=1}^n \left[ \left( \xi_{ij}^n - G(\frac{i}{n},\frac{j}{n}) \right) \left( b(\Xlimit_{\frac{i}{n}}(s),\Xlimit_{\frac{j}{n}}(s)) - b(\Xlimit_{\frac{i}{n}}(s),0) \right) \right. \notag \\
		& \qquad \left. + G(\frac{i}{n},\frac{j}{n}) \left( b(\Xlimit_{\frac{i}{n}}(s),\Xlimit_{\frac{j}{n}}(s)) - \int_{\Rmb^d} b(\Xlimit_{\frac{i}{n}}(s),x) \,\lawlimit_{\frac{j}{n},s}(dx) \right) \right], \label{eq:Tmc}
	\end{align}	
	we have
	$$\left| |\Rmc_s^{n,i}| + \frac{1}{n} \left( 1+|\Xlimit_{\frac{i}{n}}(s)| \right) \right|^2 \le 4 \left( |\Tmctil_s^{n,i}|^2 + |\Tmc_s^{n,i}|^2 + \frac{1}{n^2} + \frac{1}{n^2} |\Xlimit_{\frac{i}{n}}(s)|^2 \right).$$
	Therefore
	\begin{align*}
		& \Pmb\left( \int_0^t e^{-\kappa  (t-s)} \left| |\Rmc_s^{n,i}| + \frac{1}{n} \left( 1+|\Xlimit_{\frac{i}{n}}(s)| \right) \right|^2 ds > \varepsilon \right) \\
		& \le \Pmb\left( \int_0^t e^{-\kappa  (t-s)} |\Tmctil_s^{n,i}|^2 ds > \varepsilon/16 \right) + \Pmb\left( \int_0^t e^{-\kappa  (t-s)} |\Tmc_s^{n,i}|^2 ds > \varepsilon/16 \right) \\
		& \quad + \Pmb\left( \int_0^t e^{-\kappa  (t-s)} \frac{1}{n^2} ds > \varepsilon/16 \right) + \Pmb\left( \int_0^t e^{-\kappa  (t-s)} \frac{1}{n^2} |\Xlimit_{\frac{i}{n}}(s)|^2\, ds > \varepsilon/16 \right) \\
		& =: p(n,t,\varepsilon,i,1)+p(n,t,\varepsilon,i,2)+p(n,t,\varepsilon,i,3)+p(n,t,\varepsilon,i,4),
	\end{align*}
	and hence 
	\begin{equation}
		\label{eq:pn-total}
		p(n,t,\varepsilon) \le \sum_{i=1}^n \left[ p(n,t,\varepsilon,i,1)+p(n,t,\varepsilon,i,2)+p(n,t,\varepsilon,i,3)+p(n,t,\varepsilon,i,4) \right].
	\end{equation}
	
	Next we analyze each term.
	For $p(n,t,\varepsilon,i,4)$, applying Markov's inequality and Jensen's inequality, we have
	\begin{align*}
		p(n,t,\varepsilon,i,4)
		& \le e^{-\theta \varepsilon/16} \Emb \left[\exp \left( \theta \int_0^t e^{-\kappa  (t-s)} \frac{1}{n^2} |\Xlimit_{\frac{i}{n}}(s)|^2\, ds \right) \right] \\
		& \le e^{-\theta \varepsilon/16} \Emb \left[\int_0^t C_1e^{-\kappa  (t-s)} \exp \left( \frac{C_1\theta}{n^2} |\Xlimit_{\frac{i}{n}}(s)|^2 \right) ds \right]
	\end{align*}
	for each $\theta > 0$.
	Taking $\theta=n^2(\kappa \wedge \theta_0)/8C_1$ and using Lemma \ref{lem:square-exponential-moment}(ii), we have
	$$p(n,t,\varepsilon,i,4) \le C_2 e^{-C_3 n^2 \varepsilon}.$$	
	For $p(n,t,\varepsilon,i,3)$, we have
	$$p(n,t,\varepsilon,i,3)=0$$
	whenever $n>C_4/\sqrt{\varepsilon}$.	
	For $p(n,t,\varepsilon,i,2)$, we will show in Lemma \ref{lem:pn2}(ii) below that
	\begin{equation*}
		p(n,t,\varepsilon,i,2) \le C_5e^{-C_6 n \varepsilon}.
	\end{equation*}	
	Combining these with \eqref{eq:pn-total} and Lemma \ref{lem:Tmctil}(ii) below  gives
	\begin{equation*}
		p(n,t,\varepsilon) \le C_7ne^{-C_8\sqrt{n \varepsilon}} \le C_9e^{-C_{10} \sqrt{n \varepsilon}},
	\end{equation*}
	namely \eqref{eq:pnt-special} holds, for $n>C_{11}/\sqrt{\varepsilon}$.
	It also follows from Lemma \ref{lem:Tmctil}(ii) that if in addition \eqref{eq:condition-special} holds, then the above $\sqrt{n \varepsilon}$ could be improved to $n\varepsilon$.
	This gives part (ii).
	
	The proof of part (i) is the same as that of part (ii), except that Lemmas \ref{lem:square-exponential-moment}(ii), \ref{lem:Tmctil}(ii), \ref{lem:pn2}(ii) and all the relevant constants (such as $C_1$) are to be replaced by Lemmas \ref{lem:square-exponential-moment}(i), \ref{lem:Tmctil}(i), \ref{lem:pn2}(i), and $T$-dependent constants (such as $C_1(T)$).
\end{proof}

In the following lemma we estimate the probability $p(n,t,\varepsilon,i,1)$.
Recall that
\begin{equation*}
	p(n,t,\varepsilon,i,1) = \Pmb\left( \int_0^t e^{-\kappa  (t-s)} |\Tmctil_s^{n,i}|^2 ds > \varepsilon/16 \right),
\end{equation*}
where $\Tmctil_s^{n,i}$ was introduced in \eqref{eq:Tmctil}.

\begin{Lemma}
	\phantomsection
	\label{lem:Tmctil}
	\begin{enumerate}[(i)]
	\item 
		For every $T \in (0,\infty)$, there exist some $C(T), K_T \in (0,\infty)$ such that
		\begin{equation}
			\label{eq:Tmctil-sub-Gaussian-1}
			\max_{i=1,\dotsc,n} \sup_{t \in [0,T]} p(n,t,\varepsilon,i,1) \le C(T)e^{-K_T\sqrt{n\varepsilon}}, \quad \forall n \in \Nmb, \: \varepsilon>0.
		\end{equation}	
		If in addition \eqref{eq:condition-special} holds, then $\sqrt{n\varepsilon}$ in \eqref{eq:Tmctil-sub-Gaussian-1} could be improved to $n\varepsilon$. 
	\item 
		Suppose that Condition \ref{cond:strong} holds. Then there exist some $C, K \in (0,\infty)$ such that
		\begin{equation}
			\label{eq:Tmctil-sub-Gaussian-2}
			\max_{i=1,\dotsc,n} \sup_{t \ge 0} p(n,t,\varepsilon,i,1) \le Ce^{-K\sqrt{n\varepsilon}}, \quad \forall n \in \Nmb, \: \varepsilon>0.
		\end{equation}
		If in addition \eqref{eq:condition-special} holds, then $\sqrt{n\varepsilon}$ in \eqref{eq:Tmctil-sub-Gaussian-2} could be improved to $n\varepsilon$.
	\end{enumerate}	
\end{Lemma}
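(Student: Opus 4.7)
The plan is to exploit the multiplicative structure of $\Tmctil_s^{n,i}$, which decouples the random graph sampling from the McKean--Vlasov dynamics. Writing $\eta_{ij}^n := \xi_{ij}^n - G(\frac{i}{n}, \frac{j}{n})$, we have
\begin{equation*}
\Tmctil_s^{n,i} = Y_i^n \, b\bigl(\Xlimit_{\frac{i}{n}}(s), 0\bigr), \qquad Y_i^n := \frac{1}{n} \sum_{j=1}^n \eta_{ij}^n,
\end{equation*}
and by the standing assumption $Y_i^n$ is independent of $\{\Xlimit_u : u \in I\}$. The Lipschitz bound $|b(x,0)|^2 \le C(1 + |x|^2)$ then gives
\begin{equation*}
\int_0^t e^{-\kappa(t-s)} |\Tmctil_s^{n,i}|^2 \, ds \le C\, |Y_i^n|^2 \, Z_t^i, \qquad Z_t^i := \int_0^t e^{-\kappa(t-s)} \bigl(1 + |\Xlimit_{\frac{i}{n}}(s)|^2\bigr) \, ds.
\end{equation*}

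I would then split the event $\{|Y_i^n|^2 Z_t^i > \varepsilon^*\}$, with $\varepsilon^*$ proportional to $\varepsilon$, into $\{|Y_i^n|^2 > \alpha\} \cup \{Z_t^i > \beta\}$ for any positive $\alpha, \beta$ with $\alpha\beta = \varepsilon^*$, and bound each piece separately. Since $|\eta_{ij}^n| \le 1$ and the $\eta_{ij}^n$ are centered and mutually independent in $j$ for fixed $i$, Hoeffding's inequality yields $\Pmb(|Y_i^n| > \sqrt{\alpha}) \le 2 e^{-2 n \alpha}$. For the state factor, Jensen's inequality applied to the probability measure $e^{-\kappa(t-s)}\,ds / \int_0^t e^{-\kappa(t-r)}\,dr$ on $[0,t]$ reduces $\Emb \exp(\theta Z_t^i)$ to $\sup_{s \in [0,t]} \Emb \exp(\theta' (1 + |\Xlimit_{\frac{i}{n}}(s)|^2))$ for $\theta' = \theta \int_0^t e^{-\kappa(t-r)}\,dr$. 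This last quantity is finite uniformly in $s \ge 0$ and $i$ by Lemma \ref{lem:square-exponential-moment}(ii) under Condition \ref{cond:strong}, and uniformly in $t \in [0,T]$ by Lemma \ref{lem:square-exponential-moment}(i) otherwise (using $\int_0^t e^{-\kappa(t-r)}\,dr \le T e^{|\kappa|T}$ to control $\theta'$), so Markov's inequality gives $\Pmb(Z_t^i > \beta) \le C_1 e^{-\theta \beta}$. Combining,
\begin{equation*}
p(n,t,\varepsilon,i,1) \le 2 e^{-2 n \alpha} + C_1 e^{-\theta \beta}.
\end{equation*}

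Balancing the two exponents by choosing $2 n \alpha = \theta \beta$ alongside $\alpha \beta = \varepsilon^*$ forces $\alpha = \Theta(\sqrt{\varepsilon/n})$, producing the exponent of order $\sqrt{n \varepsilon}$ and establishing \eqref{eq:Tmctil-sub-Gaussian-2}; the same scheme with $T$-dependent constants from Lemma \ref{lem:square-exponential-moment}(i) delivers \eqref{eq:Tmctil-sub-Gaussian-1}. Under the additional hypothesis \eqref{eq:condition-special} the argument collapses: if $\xi_{ij}^n = G(\frac{i}{n}, \frac{j}{n})$ then $Y_i^n \equiv 0$ and the probability vanishes, while if $b(\cdot, 0)$ is bounded then $|\Tmctil_s^{n,i}| \le \|b(\cdot,0)\|_\infty |Y_i^n|$, the state factor $Z_t^i$ is replaced by a deterministic $O(1)$ quantity, the splitting is unnecessary, and Hoeffding applied directly to $|Y_i^n|$ delivers the improved rate $n\varepsilon$. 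The main obstacle is precisely the mismatch of tail scales in the product $|Y_i^n|^2 Z_t^i$---a $1/\sqrt{n}$-scale sub-Gaussian factor against a sub-exponential, possibly unbounded state factor---and the balancing step above is tailored to pay the inevitable price of the resulting $\sqrt{n \varepsilon}$ rate.
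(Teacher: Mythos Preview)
Your argument is correct, and it takes a genuinely different route from the paper's proof. The paper does not split the product $|Y_i^n|^2 Z_t^i$ into two events; instead it applies Markov's inequality directly to the square root $|Y_i^n|\sqrt{\int_0^t e^{-\kappa(t-s)}|b(\Xlimit_{i/n}(s),0)|^2\,ds}$ with parameter $\theta$, conditions on $\Xlimit_{i/n}$, and uses Hoeffding's \emph{lemma} (the moment-generating-function bound) on each $\eta_{ij}^n$ to produce a factor $\exp\bigl(\tfrac{\theta^2}{8n}\int_0^t e^{-\kappa(t-s)}|b(\Xlimit_{i/n}(s),0)|^2\,ds\bigr)$, then applies Jensen and Lemma~\ref{lem:square-exponential-moment} exactly as you do; the choice $\theta\propto\sqrt{n}$ yields the $\sqrt{n\varepsilon}$ exponent. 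For the improved rate under bounded $b(\cdot,0)$, the paper uses a Gaussian linearization trick (introducing an auxiliary standard normal $Z$ so that $\Emb e^{\theta|Y_i^n|^2}=\Emb e^{\sqrt{2\theta}\,Z\,Y_i^n}$) and then Hoeffding's lemma conditionally on $Z$. Your approach is more elementary---you only need Hoeffding's tail inequality, not the MGF form, and your treatment of the bounded-$b(\cdot,0)$ case avoids the auxiliary Gaussian entirely---at the cost of the mild overhead of introducing the splitting parameters $\alpha,\beta$ and solving for their balance. Both methods exploit exactly the same independence structure and the same square-exponential moment bounds, and arrive at identical rates.
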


\begin{proof}
	We first prove part (ii).
	Fix $n,t,\varepsilon,i$.
	Applying Markov's inequality, we have
	\begin{align}
		p(n,t,\varepsilon,i,1) 
		& = \Pmb\left( \int_0^t e^{-\kappa  (t-s)} \left| \frac{1}{n} \sum_{j=1}^n \left( \xi_{ij}^n - G(\frac{i}{n},\frac{j}{n}) \right) b(\Xlimit_{\frac{i}{n}}(s),0) \right|^2 ds > \varepsilon/16 \right) \label{eq:pn1} \\
		& = \Pmb\left( \left| \frac{1}{n} \sum_{j=1}^n \left( \xi_{ij}^n - G(\frac{i}{n},\frac{j}{n}) \right) \right| \sqrt{\int_0^t e^{-\kappa  (t-s)} \left| b(\Xlimit_{\frac{i}{n}}(s),0) \right|^2 ds} > \sqrt{\varepsilon}/4 \right) \notag \\
		& \le e^{-\theta \sqrt{\varepsilon}/4} \Emb \left[ \exp \left( \theta \left| \frac{1}{n} \sum_{j=1}^n \left( \xi_{ij}^n - G(\frac{i}{n},\frac{j}{n}) \right) \right| \sqrt{\int_0^t e^{-\kappa  (t-s)} \left| b(\Xlimit_{\frac{i}{n}}(s),0) \right|^2 ds} \right) \right] \notag
	\end{align}
	for each $\theta > 0$.
	Since $e^{|x|} \le e^x + e^{-x}$, we have
	\begin{align*}
		& \Emb \left[ \exp \left( \theta \left| \frac{1}{n} \sum_{j=1}^n \left( \xi_{ij}^n - G(\frac{i}{n},\frac{j}{n}) \right) \right| \sqrt{\int_0^t e^{-\kappa  (t-s)} \left| b(\Xlimit_{\frac{i}{n}}(s),0) \right|^2 ds} \right) \right] \\
		& \le \Emb \left[ \exp \left( \frac{\theta}{n} \sum_{j=1}^n \left( \xi_{ij}^n - G(\frac{i}{n},\frac{j}{n}) \right) \sqrt{\int_0^t e^{-\kappa  (t-s)} \left| b(\Xlimit_{\frac{i}{n}}(s),0) \right|^2 ds} \right) \right] \\
		& \qquad + \Emb \left[ \exp \left( - \frac{\theta}{n} \sum_{j=1}^n \left( \xi_{ij}^n - G(\frac{i}{n},\frac{j}{n}) \right) \sqrt{\int_0^t e^{-\kappa  (t-s)} \left| b(\Xlimit_{\frac{i}{n}}(s),0) \right|^2 ds} \right) \right].
	\end{align*}	
	Since $0 \le \xi_{ij}^n \le 1$ and $\Emb \left[\xi_{ij}^n - G(\frac{i}{n},\frac{j}{n}) \,|\, \Xlimit_{\frac{i}{n}} \right] = 0$, we can condition on $\Xlimit_{i/n}$ and apply Hoeffding's lemma to obtain that
	\begin{align*}
		& \Emb \left[ \exp \left( \pm \frac{\theta}{n} \sum_{j=1}^n \left( \xi_{ij}^n - G(\frac{i}{n},\frac{j}{n}) \right) \sqrt{\int_0^t e^{-\kappa  (t-s)} \left| b(\Xlimit_{\frac{i}{n}}(s),0) \right|^2 ds} \right) \right] \\
		& = \Emb \left\{ \prod_{j=1}^n \Emb \left[ \exp \left( \pm \frac{\theta}{n} \left( \xi_{ij}^n - G(\frac{i}{n},\frac{j}{n}) \right) \sqrt{\int_0^t e^{-\kappa  (t-s)} \left| b(\Xlimit_{\frac{i}{n}}(s),0) \right|^2 ds} \right) \,\Big|\, \Xlimit_{\frac{i}{n}} \right] \right\}\\
		& \le \Emb \left[ \exp \left( \frac{\theta^2}{8n} \int_0^t e^{-\kappa  (t-s)} \left| b(\Xlimit_{\frac{i}{n}}(s),0) \right|^2 ds \right) \right] \\
		& \le \Emb \left[ \int_0^t C_1 e^{-\kappa  (t-s)} \exp \left( \frac{C_1\theta^2}{8n} \left| b(\Xlimit_{\frac{i}{n}}(s),0) \right|^2 \right) ds \right] \\
		& \le \Emb \left[ \int_0^t C_1 e^{-\kappa  (t-s)} \exp \left( \frac{C_2\theta^2}{8n} (1+|\Xlimit_{\frac{i}{n}}(s)|^2) \right) ds \right],
	\end{align*}
	where the second inequality follows from Jensen's inequality, and the last line uses the linear growth property of $b$.
	Combining these three estimates gives
	\begin{equation*}
		p(n,t,\varepsilon,i,1) \le 2e^{-\theta \sqrt{\varepsilon}/4} \Emb \left[ \int_0^t C_1 e^{-\kappa  (t-s)} \exp \left( \frac{C_2\theta^2}{8n} (1+|\Xlimit_{\frac{i}{n}}(s)|^2) \right) ds \right].
	\end{equation*} 
	Taking $\theta = \sqrt{n(\kappa \wedge \theta_0)/C_2}$ and using Lemma \ref{lem:square-exponential-moment}(ii), we have the desired result \eqref{eq:Tmctil-sub-Gaussian-2}.
	
	Now we prove the strengthened version of \eqref{eq:Tmctil-sub-Gaussian-2} under the extra assumption that \eqref{eq:condition-special} holds, namely $\xi_{ij}^n=G(\frac{i}{n},\frac{j}{n}) \mbox{ or } b(\cdot,0) \mbox{ is bounded}$.	
	If $\xi_{ij}^n=G(\frac{i}{n},\frac{j}{n})$, from \eqref{eq:pn1} we have $p(n,t,\varepsilon,i,1)=0$ and the desired result clearly holds.	
	If $b(\cdot,0)$ is bounded, from \eqref{eq:pn1} we have
	\begin{align*}
		p(n,t,\varepsilon,i,1) 
		& \le \Pmb\left( \left| \frac{1}{n} \sum_{j=1}^n \left( \xi_{ij}^n - G(\frac{i}{n},\frac{j}{n}) \right) \right|^2 > C_3\varepsilon \right) \\
		& \le e^{-\theta C_3 \varepsilon} \Emb \left[ \exp \left( \theta \left| \frac{1}{n} \sum_{j=1}^n \left( \xi_{ij}^n - G(\frac{i}{n},\frac{j}{n}) \right) \right|^2 \right) \right]
	\end{align*}	
	for each $\theta > 0$.
	Letting $Z$ be an independent standard normal random variable, we have
	\begin{align*}
		\Emb \left[ \exp \left( \theta \left| \frac{1}{n} \sum_{j=1}^n \left( \xi_{ij}^n - G(\frac{i}{n},\frac{j}{n}) \right) \right|^2 \right) \right] & = \Emb \Big[ \exp \Big(Z \frac{\sqrt{2\theta}}{n} \sum_{j=1}^n \left( \xi_{ij}^n - G(\frac{i}{n},\frac{j}{n}) \right) \Big) \Big] \\
		& = \Emb \Big\{ \prod_{j=1}^n \Emb \Big[ \exp \Big(Z \frac{\sqrt{2\theta}}{n} \left( \xi_{ij}^n - G(\frac{i}{n},\frac{j}{n}) \right) \Big) \,\Big|\, Z \Big] \Big\} \\
		& \le \Emb \left[ \exp \left(\frac{\theta Z^2}{4n}\right) \right].
	\end{align*}
	where the last line uses Hoeffding's lemma again.
	It then follows from the formula of the moment generating function of $Z^2$ that
	$$\Emb \left[ \exp \left( \theta \left| \frac{1}{n} \sum_{j=1}^n \left( \xi_{ij}^n - G(\frac{i}{n},\frac{j}{n}) \right) \right|^2 \right) \right] \le \left(1-\frac{\theta}{2n}\right)^{-1/2}.$$
	Taking $\theta = n$ gives the strengthened version of \eqref{eq:Tmctil-sub-Gaussian-2}.
	
	The proof of part (i) is the same as that of part (ii), except that Lemma \ref{lem:square-exponential-moment}(ii) and all the relevant constants (such as $C_1$) are to be replaced by Lemma \ref{lem:square-exponential-moment}(i) and $T$-dependent constants (such as $C_1(T)$).
\end{proof}

For the analysis of $p(n,t,\varepsilon,i,2)$, we will need the following standard property of sub-Gaussian random vectors. A proof is provided in Appendix \ref{sec:appendix-2} for completeness.

\begin{Lemma}
	\label{lem:sub-Gaussian}
	Let $Y$ be an $\Rd$-valued random variable with $\Emb Y=0$ and $\Emb[e^{|Y|^2/a}] \le 2$ for some $a \in (0,\infty)$. Then for any $\lambda \in \Rd$,
	\begin{equation*}
		\Emb[e^{\lambda \cdot Y}] \le \exp \left(\frac{5a}{2}|\lambda|^2\right).
	\end{equation*}
\end{Lemma}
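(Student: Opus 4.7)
The plan is to reduce the vector statement to a one-dimensional sub-Gaussian bound and then treat the scalar problem by a short case analysis on the size of the variance proxy.

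For the reduction, fix $\lambda \in \Rd$ and set $X := \lambda \cdot Y$. Then $X$ is real-valued with $\Emb X = 0$, and since $|X|^2 \le |\lambda|^2 |Y|^2$ by Cauchy--Schwarz, the hypothesis gives $\Emb[\exp(X^2/(a|\lambda|^2))] \le \Emb[\exp(|Y|^2/a)] \le 2$. Thus it suffices to prove the following scalar version: if $X$ is a real random variable with $\Emb X = 0$ and $\Emb[e^{X^2/s}] \le 2$ for some $s > 0$, then $\Emb[e^X] \le e^{5s/2}$. The lemma then follows upon taking $s = a|\lambda|^2$.

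For the scalar statement I would split by the size of $s$. When $s$ is not too small, centering is unnecessary: the Young-type inequality $X \le X^2/(2s) + s/2$, combined with Jensen applied to $z \mapsto z^{1/2}$ and the hypothesis, directly produces $\Emb e^X \le \sqrt{2}\,e^{s/2}$, which is below $e^{5s/2}$ once $s$ exceeds a small absolute constant. When $s$ is small, centering is essential: starting from the pointwise inequality $e^x \le 1 + x + (x^2/2)\,e^{|x|}$ and using $\Emb X = 0$ gives $\Emb e^X \le 1 + \frac{1}{2}\Emb[X^2 e^{|X|}]$. A second application of the same Young inequality converts $e^{|X|}$ into $e^{s/2} e^{X^2/(2s)}$, and the elementary bound $z e^z \le e^{2z}$ (with $z = X^2/(2s)$) reduces $\Emb[X^2 e^{X^2/(2s)}]$ to a constant multiple of $s$ times $\Emb e^{X^2/s}$. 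The resulting bound $\Emb e^X \le 1 + 2s\,e^{s/2}$ is then $\le e^{5s/2}$ for small $s$ via the elementary inequality $1 + x \le e^x$.

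The main delicacy is choosing the thresholds so that the two cases overlap and the explicit constant $5/2$ comes out cleanly on both sides; the underlying estimates are routine once the right Young parameters are picked, and no probabilistic input is needed beyond the stated hypotheses.
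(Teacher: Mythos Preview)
Your proposal is correct, and after the scalar reduction (which the paper does not make but which is a clean simplification) your small-$s$ case is essentially the paper's argument: the paper also starts from $e^{\lambda\cdot Y}=1+\lambda\cdot Y+\cdots$, uses centering to kill the linear term, and bounds the remainder by $\frac{|\lambda|^2}{2}\Emb[|Y|^2 e^{|\lambda||Y|}]$ via the same Young/``$z\le e^z$'' manipulations, arriving at the identical estimate $\Emb e^{\lambda\cdot Y}\le 1+2a|\lambda|^2 e^{a|\lambda|^2/2}$ (i.e.\ your $1+2se^{s/2}$). The difference is only in the last elementary step: the paper observes
\[
1+2se^{s/2}\le (1+2s)e^{s/2}\le e^{2s}e^{s/2}=e^{5s/2}
\]
for \emph{all} $s\ge 0$, so no case split is needed. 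Your large-$s$ branch (which ignores centering) is therefore redundant, though harmless; recognizing the one-line inequality above would streamline your argument to match the paper's.
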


Recall $\Tmc_t^{n,i}$ defined in \eqref{eq:Tmc}.
Let
\begin{align}
	\Tmc^{n,i,j}_t & := \left( \xi_{ij}^n - G(\frac{i}{n},\frac{j}{n}) \right) \left( b(\Xlimit_{\frac{i}{n}}(t),\Xlimit_{\frac{j}{n}}(t)) - b(\Xlimit_{\frac{i}{n}}(t),0) \right) \notag \\
	& \qquad + G(\frac{i}{n},\frac{j}{n}) \left( b(\Xlimit_{\frac{i}{n}}(t),\Xlimit_{\frac{j}{n}}(t)) - \int_{\Rmb^d} b(\Xlimit_{\frac{i}{n}}(t),x) \,\lawlimit_{\frac{j}{n},t}(dx) \right). \label{eq:Tmc-ij}
\end{align}	
Note that
$\Tmc_t^{n,i}=\frac{1}{n}\sum_{j=1}^n \Tmc^{n,i,j}_t$.
We will estimate $p(n,t,\varepsilon,i,2)$ by proving the following sub-Gaussian bounds of $\Tmc^{n,i,j}_t$.

\begin{Lemma}
	\phantomsection
	\label{lem:Tmc}
	\begin{enumerate}[(i)]
	\item 
		For every $T \in (0,\infty)$, there exists some $C(T) \in (0,\infty)$ such that
		\begin{equation*}
			\max_{i=1,\dotsc,n} \sup_{t \in [0,T]} \Emb \Big[ \exp \Big(s\Big|\frac{1}{n}\sum_{j \ne i} \Tmc^{n,i,j}_t\Big|^2\Big) \Big] \le \left( 1-\frac{C(T)s}{n} \right)^{-d/2}, \: \forall \, s \in \left(0,\frac{n}{C(T)}\right), n \in \Nmb.
		\end{equation*}	
	\item 
		Suppose Condition \ref{cond:strong} holds. 
		Then there exists some $C \in (0,\infty)$ such that
		\begin{equation*}
			\max_{i=1,\dotsc,n} \sup_{t \ge 0} \Emb \Big[ \exp \Big(s\Big|\frac{1}{n}\sum_{j \ne i} \Tmc^{n,i,j}_t\Big|^2\Big) \Big] \le \left( 1-\frac{Cs}{n} \right)^{-d/2}, \: \forall \, s \in \left(0,\frac{n}{C}\right), n \in \Nmb.
		\end{equation*}
	\end{enumerate}	
\end{Lemma}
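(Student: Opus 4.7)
The plan is to exploit a conditional independence structure together with a Gaussian linearization of the squared norm. Conditional on $\Xlimit_{\frac{i}{n}}(t)$, I claim the summands $\{\Tmc^{n,i,j}_t\}_{j\ne i}$ are independent $\Rd$-valued mean-zero random vectors. Independence follows from the mutual independence of $\{\xi_{ij}^n\}_j$, the fact that $\{\xi_{ij}^n\}$ is jointly independent of $\{\Xlimit_u\}$, and the mutual independence of $\{\Xlimit_{\frac{j}{n}}(t)\}_{j\ne i}$ from each other and from $\Xlimit_{\frac{i}{n}}(t)$. Conditional centering follows from $\Emb[\xi_{ij}^n] = G(\tfrac{i}{n},\tfrac{j}{n})$ and from $\Lmc(\Xlimit_{\frac{j}{n}}(t)) = \lawlimit_{\frac{j}{n},t}$, since each of the two summands defining $\Tmc^{n,i,j}_t$ has conditional mean zero given $\Xlimit_{\frac{i}{n}}(t)$.

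Next, I would prove a uniform sub-Gaussian tail for each summand. Using the Lipschitz property of $b$, $|\xi_{ij}^n - G|\le 1$ and $G\le 1$, one gets the pathwise bound $|\Tmc^{n,i,j}_t| \le 2K_b|\Xlimit_{\frac{j}{n}}(t)| + K_b\Emb|\Xlimit_{\frac{j}{n}}(t)|$. The uniform mean bound $\sup_{u,t}\Emb|\Xlimit_u(t)| < \infty$ holds on $[0,T]$ by Proposition \ref{prop:well-posedness}(i) for part (i), and uniformly in $t\ge 0$ under Condition \ref{cond:strong} by Lemma \ref{lem:Lipschitz-special} for part (ii). Combined with Lemma \ref{lem:square-exponential-moment}, this yields a constant $a\in(0,\infty)$ (depending on $T$ in part (i)) such that
\begin{equation*}
\Emb\big[\exp(|\Tmc^{n,i,j}_t|^2/a)\,\big|\,\Xlimit_{\frac{i}{n}}(t)\big] \le 2,
\end{equation*}
uniformly in $n,i,j$ (and $t$ in part (ii)); the conditioning is harmless since the pathwise bound on $\Tmc^{n,i,j}_t$ involves only $\Xlimit_{\frac{j}{n}}(t)$, which is independent of $\Xlimit_{\frac{i}{n}}(t)$.

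Finally, I would square the norm by a Gaussian trick: for a standard Gaussian $Z$ on $\Rd$ independent of everything, $\exp(s|v|^2) = \Emb_Z\exp(\sqrt{2s}\,Z\cdot v)$. Then Fubini gives
\begin{equation*}
\Emb\,\exp\!\Big(s\Big|\tfrac1n\!\sum_{j\ne i}\Tmc^{n,i,j}_t\Big|^2\Big)
= \Emb_Z\,\Emb\Big[\prod_{j\ne i}\exp\!\Big(\tfrac{\sqrt{2s}}{n}\,Z\cdot\Tmc^{n,i,j}_t\Big)\Big].
\end{equation*}
Conditioning on $(Z,\Xlimit_{\frac{i}{n}}(t))$ and using conditional independence, Lemma \ref{lem:sub-Gaussian} applied with $\lambda=\tfrac{\sqrt{2s}}{n}Z$ bounds each factor by $\exp\!\big(\tfrac{5a}{2}\cdot\tfrac{2s|Z|^2}{n^2}\big)$, so the product is at most $\exp\!\big(\tfrac{5as|Z|^2}{n}\big)$. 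Integrating this against the law of $Z$ via the $\chi^2_d$ moment generating function, $\Emb_Z\exp(\tau|Z|^2) = (1-2\tau)^{-d/2}$ for $\tau<1/2$, yields $(1-\tfrac{10as}{n})^{-d/2}$, giving the claimed estimate with $C=10a$ (respectively $C(T)$).

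The main obstacle is Step~2: securing a sub-Gaussian constant $a$ uniform in all of $n,i,j,t$. For the finite-horizon part this is straightforward from Lemma \ref{lem:square-exponential-moment}(i), but the uniform-in-time version crucially exploits the dissipativity in Condition \ref{cond:strong} through Lemma \ref{lem:square-exponential-moment}(ii) and Lemma \ref{lem:Lipschitz-special}; without these the sub-Gaussian parameter would degrade as $t\to\infty$ and the final bound would blow up.
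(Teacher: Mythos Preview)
Your proposal is correct and follows essentially the same approach as the paper: both arguments condition on $\Xlimit_{\frac{i}{n}}(t)$ to obtain conditional independence and mean zero of the summands, invoke Lemma~\ref{lem:square-exponential-moment} for a uniform conditional sub-Gaussian bound, apply Lemma~\ref{lem:sub-Gaussian}, and then use the Gaussian linearization $\exp(s|v|^2)=\Emb_Z\exp(\sqrt{2s}\,Z\cdot v)$ together with the $\chi^2_d$ moment generating function to conclude. The only cosmetic difference is that the paper writes the pathwise bound directly as $|\Tmc^{n,i,j}_t|^2 \le C_1(1+|\Xlimit_{\frac{j}{n}}(t)|^2)$, absorbing the uniform mean bound into the constant, whereas you spell it out explicitly.
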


\begin{proof}
	We first prove part (ii).
	Fix $t,n,i$ and $j \ne i$.
	Using the Lipschitz property of $b$ and Lemma \ref{lem:square-exponential-moment}(ii), we have
	$$|\Tmc^{n,i,j}_t|^2 \le C_1(1+|\Xlimit_{\frac{j}{n}}(t)|^2).$$
	It then follows from Lemma \ref{lem:square-exponential-moment}(ii) that
	\begin{equation*}
		\Emb\left[ \exp \left(|\Tmc^{n,i,j}_t|^2/C_2\right) \,\big|\, \Xlimit_{\frac{i}{n}}(t)\right] \le 2.
	\end{equation*}
	Using this and the fact that $\Emb\left[ \Tmc^{n,i,j}_t \,\big|\, \Xlimit_{\frac{i}{n}}(t)\right] = 0$, we can apply Lemma \ref{lem:sub-Gaussian} to get
	\begin{equation*}
		\Emb\left[ \exp(\lambda \cdot \Tmc^{n,i,j}_t) \,\big|\, \Xlimit_{\frac{i}{n}}(t)\right] \le \exp \left(\frac{5C_2}{2}|\lambda|^2\right)
	\end{equation*}
	for each $\lambda \in \Rmb^d$.
	Let $Z$ be an independent $d$-dimensional standard normal random vector.
	Using the independence of $(Z, \{\xi_{ij}^n\}, \{\Xlimit_{\frac{i}{n}}(t)\})$, we have that for any $s>0$,
	\begin{align*}
		\Emb \Big[ \exp \Big(s\Big|\frac{1}{n}\sum_{j \ne i} \Tmc^{n,i,j}_t\Big|^2\Big) \Big] & = \Emb \Big[ \exp \Big(Z \cdot \frac{\sqrt{2s}}{n}\sum_{j \ne i} \Tmc^{n,i,j}_t \Big) \Big] \\
		& = \Emb \Big\{ \prod_{j \ne i} \Emb \Big[ \exp \Big(Z \cdot \frac{\sqrt{2s}}{n} \Tmc^{n,i,j}_t \Big) \,\Big|\, Z, \Xlimit_{\frac{i}{n}}(t) \Big] \Big\} \\
		& \le \Emb \left[ \exp \left(\frac{5C_2}{2}\left|\frac{\sqrt{2s}Z}{n}\right|^2(n-1)\right) \right].
	\end{align*}
	It then follows from the formula of the moment generating function of $|Z|^2$ that
	\begin{equation*}
		\Emb \Big[ \exp \Big(s\Big|\frac{1}{n}\sum_{j \ne i} \Tmc^{n,i,j}_t\Big|^2\Big) \Big] \le \left( 1-\frac{C_3s}{n} \right)^{-d/2},
	\end{equation*}
	for all $t \ge 0$, $n \in \Nmb$, $i=1,\dotsc,n$, $s < n/C_3$.
	This gives part (ii).
	
	The proof of part (i) is the same as that of part (ii), except that Lemma \ref{lem:square-exponential-moment}(ii) and all the relevant constants (such as $C_1$) are to be replaced by Lemma \ref{lem:square-exponential-moment}(i) and $T$-dependent constants (such as $C_1(T)$).
\end{proof}

Now we show the following estimate for $p(n,t,\varepsilon,i,2)$.
Recall that
$$p(n,t,\varepsilon,i,2) = \Pmb\left( \int_0^t e^{-\kappa  (t-s)} |\Tmc_s^{n,i}|^2 ds > \varepsilon/16 \right),$$
where $\Tmc_s^{n,i}$ was introduced in \eqref{eq:Tmc}.

\begin{Lemma}
	\phantomsection
	\label{lem:pn2}
	\begin{enumerate}[(i)]
	\item 
		For every $T \in (0,\infty)$, there exist some $C(T), K_T \in (0,\infty)$ such that
		\begin{equation*}
			\max_{i=1,\dotsc,n} \sup_{t \in [0,T]} p(n,t,\varepsilon,i,2) \le C(T)e^{-K_Tn\varepsilon}, \quad \forall n \in \Nmb, \: \varepsilon>0.
		\end{equation*}	
	\item
		Suppose Condition \ref{cond:strong} holds.
		Then there exist some $C,K \in (0,\infty)$ such that
		\begin{equation*}
			\max_{i=1,\dotsc,n} \sup_{t \ge 0} p(n,t,\varepsilon,i,2) \le Ce^{-Kn \varepsilon}, \quad \forall n \in \Nmb, \: \varepsilon>0.
		\end{equation*}	
	\end{enumerate}
\end{Lemma}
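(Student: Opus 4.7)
\emph{Plan.} My strategy is to carry out a Cramér-type exponential Markov argument directly parallel to the proof of Lemma \ref{lem:Tmctil}, but with Hoeffding's lemma replaced by the moment generating function bound supplied by Lemma \ref{lem:Tmc}. The key structural point is that $A_t := \int_0^t e^{-\kappa(t-s)}\,ds$ is uniformly bounded: by some $A_T < \infty$ for $t \in [0,T]$ in part (i) (even if $\kappa < 0$), and by $1/\kappa$ for all $t \ge 0$ under Condition \ref{cond:strong} in part (ii).

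First I would split $\Tmc_s^{n,i} = \frac{1}{n}\sum_{j=1}^n \Tmc_s^{n,i,j}$ into its $j=i$ and $j\ne i$ pieces, so that $|\Tmc_s^{n,i}|^2 \le 2\bigl|\frac{1}{n}\sum_{j\ne i}\Tmc_s^{n,i,j}\bigr|^2 + \frac{2}{n^2}|\Tmc_s^{n,i,i}|^2$. By the Lipschitz property of $b$ together with Lemma \ref{lem:square-exponential-moment}, the diagonal term satisfies $|\Tmc_s^{n,i,i}|^2 \le C(1+|\Xlimit_{i/n}(s)|^2)$, so its contribution to the integrand is of order $1/n^2$ and can be handled exactly as the term $p(n,t,\varepsilon,i,4)$ in the proof of Proposition \ref{prop:p-n-eps}, producing a bound of order $e^{-Cn^2\varepsilon}$ that is negligible compared to the target $e^{-Kn\varepsilon}$.

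For the off-diagonal piece, I would apply the exponential Markov inequality with parameter $\theta>0$ and then Jensen's inequality with respect to the probability measure $\frac{1}{A_t}e^{-\kappa(t-s)}\,ds$ on $[0,t]$, swapping the exponential and the time integral. Taking expectation and using Fubini gives
\begin{equation*}
	\Emb\biggl[\exp\Bigl(\theta \int_0^t e^{-\kappa(t-s)} \bigl|\tfrac{1}{n}\sum_{j\ne i}\Tmc_s^{n,i,j}\bigr|^2 ds\Bigr)\biggr] \le \frac{1}{A_t}\int_0^t e^{-\kappa(t-s)}\,\Emb\Bigl[\exp\bigl(\theta A_t \bigl|\tfrac{1}{n}\sum_{j\ne i}\Tmc_s^{n,i,j}\bigr|^2\bigr)\Bigr] ds,
\end{equation*}
and Lemma \ref{lem:Tmc} bounds the inner expectation by $(1-CA_t\theta/n)^{-d/2}$ whenever $A_t\theta < n/C$. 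Choosing $\theta = n/(2CA_T)$ in part (i), respectively $\theta = n\kappa/(2C)$ in part (ii), collapses this factor to the dimensional constant $2^{d/2}$, leaving a bound of the desired form $2^{d/2} e^{-Kn\varepsilon}$ after combining with the negligible diagonal contribution.

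I do not anticipate a substantive obstacle: the work is essentially bookkeeping. The one step requiring attention is the Jensen reduction, where one must first pull out the normalizing constant $A_t$ in order to integrate against a genuine probability measure, and then verify that $\theta A_t$ stays within the legitimate range $(0, n/C)$ of the moment generating function bound in Lemma \ref{lem:Tmc}. This constraint is precisely what forces the linear-in-$n$ scaling of $\theta$, and hence the $e^{-Kn\varepsilon}$ rate.
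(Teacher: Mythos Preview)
Your proposal is correct and follows essentially the same route as the paper: exponential Markov inequality, Jensen's inequality against the (normalized) kernel $e^{-\kappa(t-s)}\,ds$, Lemma \ref{lem:Tmc} for the off-diagonal sum, and the choice $\theta \propto n$. The only organizational difference is that you split off the diagonal term $j=i$ \emph{before} the Markov--Jensen step via a union bound on the probability, whereas the paper splits \emph{after}, applying Cauchy--Schwarz to the exponential $\Emb\exp(C_1\theta|\Tmc_t^{n,i}|^2)$ to separate $\frac{1}{n}\Tmc_t^{n,i,i}$ from $\frac{1}{n}\sum_{j\ne i}\Tmc_t^{n,i,j}$ and then invoking Lemma \ref{lem:square-exponential-moment}(ii) directly on the diagonal factor; both variants are equivalent in substance.
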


\begin{proof}
	We first prove part (ii).
	Fix $n,t,\varepsilon,i$.
	Applying Markov's inequality and Jensen's inequality, we have
	\begin{align*}
		p(n,t,\varepsilon,i,2)
		& \le e^{-\theta \varepsilon/16} \Emb \left[\exp \left( \theta \int_0^t e^{-\kappa  (t-s)} |\Tmc_s^{n,i}|^2\, ds \right) \right] \\
		& \le e^{-\theta \varepsilon/16} \Emb \left[\int_0^t C_1e^{-\kappa  (t-s)} \exp \left( C_1\theta |\Tmc_s^{n,i}|^2 \right) ds \right]
	\end{align*}
	for each $\theta > 0$.
	Recall $\Tmc^{n,i,j}_t$ in \eqref{eq:Tmc-ij}.
	Using Holder's inequality and Lemma \ref{lem:Tmc}(ii), we have
	\begin{align*}
		\Emb \left[ \exp \left( C_1\theta |\Tmc_t^{n,i}|^2 \right) \right] 
		& = \Emb \left[ \exp \left( C_1\theta \left| \frac{1}{n} \sum_{j=1}^n \Tmc^{n,i,j}_t \right|^2 \right) \right] \\
		& \le \Emb \left[ \exp \left( 2C_1\theta \left| \frac{1}{n} \Tmc^{n,i,i}_t \right|^2 + 2C_1\theta \left| \frac{1}{n} \sum_{j \ne i}^n \Tmc^{n,i,j}_t \right|^2 \right) \right] \\
		& \le \sqrt{\Emb \left[ \exp \left( 4C_1\theta \left| \frac{1}{n} \Tmc^{n,i,i}_t \right|^2 \right) \right] \Emb \left[ \exp \left( 4C_1\theta \left| \frac{1}{n} \sum_{j \ne i}^n \Tmc^{n,i,j}_t \right|^2 \right) \right]} \\
		& \le \sqrt{\Emb \left[ \exp \left( 4C_1\theta \left| \frac{1}{n} \Tmc^{n,i,i}_t \right|^2 \right) \right] \left( 1-\frac{C_2\theta}{n} \right)^{-d/2}}.
	\end{align*}
	Now taking $\theta=\delta n$ for small enough $\delta>0$ and using Lemma \ref{lem:square-exponential-moment}(ii), we have
	\begin{equation*}
		p(n,t,\varepsilon,i,2) \le C_3e^{-C_4n\varepsilon}.
	\end{equation*}
	This gives part (ii).
	
	The proof of part (i) is the same as that of part (ii), except that Lemmas \ref{lem:square-exponential-moment}(ii), \ref{lem:Tmc}(ii) and all the relevant constants (such as $C_1$) are to be replaced by Lemmas \ref{lem:square-exponential-moment}(i), \ref{lem:Tmc}(i) and $T$-dependent constants (such as $C_1(T)$).
\end{proof}

\subsection{Proofs of Theorems \ref{thm:finite} and \ref{thm:infinite}}
\label{sec:complete-pf}

Finally we need the following result on the concentration estimates of the distance $W_1(\emplimit^n_t, \lawlimitmean_t)$ in Proposition \ref{prop:prep}.

\begin{Lemma}
	\phantomsection
	\label{lem:concentration-empirical}
	\begin{enumerate}[(i)]
	\item 
		For every $T \in (0,\infty)$, there exists some $K_T \in (0,\infty)$ such that for any $d' > d$, there exists some $N_T \in \Nmb$ such that
		\begin{equation*}
			\sup_{0 \le t \le T} \Pmb\left( W_1(\emplimit^n_t,\lawlimitmean_t) > \varepsilon \right) \le \exp \left(-K_T n\varepsilon^2 \right),
		\end{equation*}		
		for	all $\varepsilon>0$ and $n \ge N_T \max(\varepsilon^{-(d'+2)},1)$.
	\item
		Suppose Condition \ref{cond:strong} holds.
		Then there exists some $K \in (0,\infty)$ such that for any $d' > d$, there exists some $N_0 \in \Nmb$ such that
		\begin{equation*}
			\sup_{t \ge 0} \Pmb\left( W_1(\emplimit^n_t,\lawlimitmean_t) > \varepsilon \right) \le \exp \left(-K n\varepsilon^2 \right),
		\end{equation*}		
		for	all $\varepsilon>0$ and $n \ge N_0 \max(\varepsilon^{-(d'+2)},1)$.
	\end{enumerate}
\end{Lemma}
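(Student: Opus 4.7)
The plan is to insert the intermediate measure $\lawlimit^n_t := \frac{1}{n}\sum_{i=1}^n \lawlimit_{i/n,t}$, namely the average of the marginal laws of the independent random variables $\{\Xlimit_{i/n}(t):i=1,\dotsc,n\}$, and bound
\begin{equation*}
W_1(\emplimit^n_t, \lawlimitmean_t) \le W_1(\emplimit^n_t, \lawlimit^n_t) + W_1(\lawlimit^n_t, \lawlimitmean_t).
\end{equation*}
The first term is a genuine ``empirical vs.\ its mean'' distance of independent but non-identically distributed samples; the second term is a deterministic Riemann-type discretization error.

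First, I would bound the deterministic term using convexity of $W_1$. Writing $I_i^n := ((i-1)/n,i/n]$ we have
\begin{equation*}
W_1(\lawlimit^n_t, \lawlimitmean_t) \le \sum_{i=1}^n \int_{I_i^n} W_1(\lawlimit_{i/n,t}, \lawlimit_{u,t})\,du.
\end{equation*}
For every $i$ such that $I_i^n$ is contained in a single $I_j$, Lemma \ref{lem:Lipschitz} (in part (i), over $[0,T]$) or Lemma \ref{lem:Lipschitz-special} (in part (ii), under Condition \ref{cond:strong} and uniformly in $t\ge 0$) gives $W_1 \le W_2 \le C|i/n-u|$, contributing $O(1/n^2)$ per interval and $O(1/n)$ in total. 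At most $N-1$ intervals straddle a boundary between the $I_j$'s, and on those intervals I would control $W_1(\lawlimit_{i/n,t},\lawlimit_{u,t})$ crudely by the sum of second moments via Lemma \ref{lem:square-exponential-moment}; these boundary intervals contribute $O(N/n)$ in total. Hence $W_1(\lawlimit^n_t, \lawlimitmean_t) \le C/n$ uniformly in $t$ on the appropriate horizon.

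Second, for the stochastic term I would invoke the heterogeneous empirical-measure concentration inequality Proposition~\ref{prop:concentration-empirical} (the non-i.i.d.\ generalization of \cite[Theorem~2.1]{BolleyGuillinVillani2007quantitative} already established in the paper). Its hypothesis is a uniform sub-Gaussian integrability bound $\sup_i \int_{\Rd} e^{\theta|x|^2}\,\lawlimit_{i/n,t}(dx) \le C$, which holds on $[0,T]$ by Lemma \ref{lem:square-exponential-moment}(i) (for part (i)) and uniformly in $t \ge 0$ under Condition \ref{cond:strong} by Lemma \ref{lem:square-exponential-moment}(ii) (for part (ii)). This yields, for every $d'>d$ and all $n \ge N\max(\varepsilon^{-(d'+2)},1)$,
\begin{equation*}
\Pmb\bigl(W_1(\emplimit^n_t, \lawlimit^n_t) > \varepsilon/2\bigr) \le \exp\bigl(-K n\varepsilon^2\bigr),
\end{equation*}
where the dimension-dependent sample-size requirement is the standard cost of Wasserstein-$1$ concentration in $\Rd$.

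Combining: once $n$ is large enough that $C/n \le \varepsilon/2$ (which is already implied by $n \ge N\varepsilon^{-(d'+2)}$ for small $\varepsilon$), the triangle inequality and the two steps above give the claim. The main delicate point, and my one substantive obstacle, is ensuring that the constants $K$ and $N_0$ in part (ii) can be chosen independently of $t$: this hinges on the fact that Lemma \ref{lem:square-exponential-moment}(ii) furnishes sub-Gaussian parameters that are uniform in $t \ge 0$, so the hypothesis of Proposition \ref{prop:concentration-empirical} applied pointwise in $t$ holds with $t$-independent constants, and the same is true of the Riemann error from Lemma \ref{lem:Lipschitz-special}. Part (i) is analogous but easier, with all constants depending only on $T$.
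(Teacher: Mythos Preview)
Your proposal is correct and follows essentially the same route as the paper: insert $\lawlimit^n_t = \frac{1}{n}\sum_i \lawlimit_{i/n,t}$, control the deterministic discretization error $W_1(\lawlimit^n_t,\lawlimitmean_t)$ via Lemmas~\ref{lem:Lipschitz}/\ref{lem:Lipschitz-special}, and apply Proposition~\ref{prop:concentration-empirical} to the stochastic term. One small point: the hypothesis of Proposition~\ref{prop:concentration-empirical} is not just the uniform square-exponential moment $E_\alpha<\infty$ but also that $\lawlimit^n_t$ satisfies a $T_1(\lambda)$ inequality with $\lambda$ independent of $n,t$; the paper closes this by citing \cite[Corollary~2.4]{BolleyVillani2005weighted}, which derives $T_1$ from the square-exponential moment bound, and you should make that step explicit.
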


\begin{proof}
	We first prove part (ii).
	Let $\lawlimit^n_t := \frac{1}{n} \sum_{i=1}^n \lawlimit_{\frac{i}{n},t}$.
	From Lemma \ref{lem:square-exponential-moment}(ii) we know that 
	$$\sup_{t \ge 0} \sup_{n \in \Nmb} \max_{i=1,\dotsc,n} \int_{\Rd} e^{\alpha |x|^2} \, \lawlimit_{\frac{i}{n},t}(dx) < \infty$$ 
	for some $\alpha > 0$.
	Then there is some $\lambda>0$ such that $\lawlimit^n_t$ satisfies the Talagrand (transport) $T_1(\lambda)$ inequality (see \eqref{eq:Talagrand} in Proposition \ref{prop:concentration-empirical}) for each $n \in \Nmb$ and $t \ge 0$ (by e.g.\ \cite[Corollary 2.4]{BolleyVillani2005weighted}).
	It then follows from Proposition \ref{prop:concentration-empirical} that for any $d' > d$, there exists some $N_0 \in \Nmb$ such that
	\begin{equation*}
		\sup_{t \ge 0} \Pmb\left( W_1(\emplimit^n_t,\lawlimit^n_t) > \varepsilon/2 \right) \le \exp \left(-4\lambda n\varepsilon^2 \right),
	\end{equation*}		
	for	all $\varepsilon>0$ and $n \ge N_0 \max(\varepsilon^{-(d'+2)},1)$.
	From Lemma \ref{lem:Lipschitz-special} we know that
	\begin{equation*}
		\sup_{t \ge 0} W_1(\lawlimit^n_t,\lawlimitmean_t) \le \varepsilon/2
	\end{equation*}
	for all $n \ge C_1/\varepsilon$.
	Combining these two estimates with the triangle inequality gives the desired result.
	
	The proof of part (i) is the same as that of part (ii), except that Lemmas \ref{lem:Lipschitz-special}, \ref{lem:square-exponential-moment}(ii) and all the relevant constants (such as $C_1$) are to be replaced by Lemmas \ref{lem:Lipschitz}, \ref{lem:square-exponential-moment}(i) and $T$-dependent constants (such as $C_1(T)$). 
\end{proof}

Now we are ready to prove Theorem \ref{thm:infinite}.

\begin{proof}[Proof of Theorem \ref{thm:infinite}]

Using Proposition \ref{prop:prep}(ii) and recalling \eqref{eq:pn}, we have
\begin{equation*}
	\Pmb(W_1(\emppre^n_t, \lawlimitmean_t) > \varepsilon) \le \Pmb(W_1(\emplimit^n_t, \lawlimitmean_t) > \varepsilon/2) + p(n,t,C_1\varepsilon^2).
\end{equation*}
The result then follows on combining this with Lemma \ref{lem:concentration-empirical}(ii) and Proposition \ref{prop:p-n-eps}(ii).
\end{proof}

Finally we prove Theorem \ref{thm:finite}.

\begin{proof}[Proof of Theorem \ref{thm:finite}]
Fix $T,\varepsilon \in (0,\infty)$ and $d'>d$.
We will suppress the dependency of constants $C(T)$'s on $T$.
From Proposition \ref{prop:prep}(i) we have
\begin{align*}
	W_1(\emppre^n_t, \lawlimitmean_t) & \le W_1(\emplimit^n_t, \lawlimitmean_t) + C_1 \left( \int_0^t e^{-\kappa  (t-s)} \frac{1}{n} \sum_{i=1}^n \left| |\Rmc_s^{n,i}| + \frac{1}{n} \left( 1+|\Xlimit_{\frac{i}{n}}(s)| \right) \right|^2 ds \right)^{1/2} \\
	& \le W_1(\emplimit^n_t, \lawlimitmean_t) + C_1 e^{|\kappa|(T-t)/2} \left( \int_0^T e^{-\kappa  (T-s)} \frac{1}{n} \sum_{i=1}^n \left| |\Rmc_s^{n,i}| + \frac{1}{n} \left( 1+|\Xlimit_{\frac{i}{n}}(s)| \right) \right|^2 ds \right)^{1/2}.
\end{align*}
Therefore
\begin{align}
	& \Pmb \left(\sup_{0 \le t \le T} W_1(\emppre^n_t, \lawlimitmean_t) > \varepsilon\right) \notag \\ 
	& \le \Pmb \left(\sup_{0 \le t \le T} W_1(\emplimit^n_t, \lawlimitmean_t) > \frac{\varepsilon}{2}\right) + \Pmb \left(\int_0^T e^{-\kappa  (T-s)} \frac{1}{n} \sum_{i=1}^n \left| |\Rmc_s^{n,i}| + \frac{1}{n} \left( 1+|\Xlimit_{\frac{i}{n}}(s)| \right) \right|^2 ds > C_2\varepsilon^2\right) \notag \\
	& =: \ptil(n,T,\varepsilon,1) + \ptil(n,T,\varepsilon,2). \label{eq:finite-time-1}
\end{align}

We first analyze $\ptil(n,T,\varepsilon,1)$.
Let $\Delta > 0$ (to be chosen later) and $M := \lceil \frac{T}{\Delta} \rceil$.
Decompose the interval $[0,T]$ as
$$[0,T] = [0,\Delta] \cup [\Delta,2\Delta] \cup \dotsb \cup [(M-1)\Delta,T] \subset \cup_{h=0}^{M-1} [h\Delta,(h+1)\Delta].$$
Then, by triangle inequality, we have
\begin{align}
	\ptil(n,T,\varepsilon,1) & \le \Pmb \left(\sup_{h=0,\dotsc,M-1} \sup_{h\Delta \le t \le (h+1)\Delta} W_1(\emplimit^n_t, \lawlimitmean_t) > \frac{\varepsilon}{2}\right) \notag \\
	& \le \Pmb \left(\sup_{h=0,\dotsc,M-1} \sup_{h\Delta \le t \le (h+1)\Delta} W_1(\emplimit^n_t, \emplimit^n_{h\Delta}) > \frac{\varepsilon}{6}\right) \notag \\
	& \qquad + \Pmb \left(\sup_{h=0,\dotsc,M-1} W_1(\emplimit^n_{h\Delta}, \lawlimitmean_{h\Delta}) > \frac{\varepsilon}{6}\right) \notag \\
	& \qquad + \Pmb \left(\sup_{h=0,\dotsc,M-1} \sup_{h\Delta \le t \le (h+1)\Delta} W_1(\lawlimitmean_{h\Delta}, \lawlimitmean_t) > \frac{\varepsilon}{6}\right). \label{eq:finite-time-2}
\end{align}

For the first term on the right hand side of \eqref{eq:finite-time-2}, from Lemma \ref{lem:regularity-n} we have
$$\Pmb \left(\sup_{h\Delta \le t \le (h+1)\Delta} W_1(\emplimit^n_t, \emplimit^n_{h\Delta}) > \frac{\varepsilon}{6}\right) \le \exp \left( -n \left( C_3\varepsilon^2-C_4\Delta\right) \right)$$
for all $h=0,\dotsc,M-1$.
So
$$\Pmb \left(\sup_{h=0,\dotsc,M-1} \sup_{h\Delta \le t \le (h+1)\Delta} W_1(\emplimit^n_t, \emplimit^n_{h\Delta}) > \frac{\varepsilon}{6}\right) \le M \exp \left( -n \left( C_3\varepsilon^2-C_4\Delta\right) \right).$$
Taking $\Delta \le C_3\varepsilon^2/2C_4$, we have $M \le C_5(T/\varepsilon^2 + 1)$ and
\begin{equation}
	\label{eq:finite-time-3}
	\Pmb \left(\sup_{h=0,\dotsc,M-1} \sup_{h\Delta \le t \le (h+1)\Delta} W_1(\emplimit^n_t, \emplimit^n_{h\Delta}) > \frac{\varepsilon}{6}\right) \le C_5\left( \frac{T}{\varepsilon^2} + 1 \right) \exp \left( -\frac{C_3}{2}n\varepsilon^2 \right).
\end{equation}
For the second term on the right hand side of \eqref{eq:finite-time-2}, by Lemma \ref{lem:concentration-empirical}(i), there exist some $K \in (0,\infty)$ and $N_0 \in \Nmb$ such that 
\begin{equation*}
	\Pmb(W_1(\emplimit^n_{h\Delta}, \lawlimitmean_{h\Delta}) > \varepsilon/6) \le e^{-Kn\varepsilon^2}
\end{equation*}
for all $\varepsilon>0$ and $n \ge N_0 \max(\varepsilon^{-(d'+2)},1)$.
Hence
\begin{align}
	\Pmb\left(\sup_{h=0,\dotsc,M-1} W_1(\emplimit^n_{h\Delta}, \lawlimitmean_{h\Delta}) > \frac{\varepsilon}{6}\right) & \le \sum_{h=0}^{M-1} \Pmb(W_1(\emplimit^n_{h\Delta}, \lawlimitmean_{h\Delta}) > \varepsilon/6) \notag \\
	& \le M e^{-Kn\varepsilon^2} \le C_5\left( \frac{T}{\varepsilon^2} + 1 \right) \exp \left( -Kn \varepsilon^2 \right). \label{eq:finite-time-4}
\end{align}
For the last term on the right hand side of \eqref{eq:finite-time-2}, taking $\Delta \le C_6 \varepsilon^2$ for some $C_6 \in (0,\infty)$ small enough, from Lemma \ref{lem:regularity} we have
\begin{equation*}
	h\Delta \le t \le (h+1)\Delta \Rightarrow W_1(\lawlimitmean_t, \lawlimitmean_{h\Delta}) \le \varepsilon/6,
\end{equation*}
and hence
\begin{equation}
	\label{eq:finite-time-5}
	\Pmb \left(\sup_{h=0,\dotsc,M-1} \sup_{h\Delta \le t \le (h+1)\Delta} W_1(\lawlimitmean_{h\Delta}, \lawlimitmean_t) > \frac{\varepsilon}{6}\right)=0. 
\end{equation}
Combining \eqref{eq:finite-time-2}--\eqref{eq:finite-time-5} and taking $\Delta = \min(C_3\varepsilon^2/2C_4, C_6 \varepsilon^2)$, we have
\begin{equation}
	\label{eq:finite-time-6}
	\ptil(n,T,\varepsilon,1) \le 2C_5\left( \frac{T}{\varepsilon^2} + 1 \right) \exp \left( -C_7n\varepsilon^2 \right)
\end{equation}
for all $\varepsilon>0$ and $n \ge N_0 \max(\varepsilon^{-(d'+2)},1)$.

Next we analyze $\ptil(n,T,\varepsilon,2)$ in \eqref{eq:finite-time-1}.
Using \eqref{eq:pn} and Proposition \ref{prop:p-n-eps}(i) we have
\begin{equation}
	\label{eq:finite-time-7}
	\ptil(n,T,\varepsilon,2) = p(n,T,C_2\varepsilon^2) \le C_8e^{-C_9\sqrt{n} \varepsilon},
\end{equation}
for all $\varepsilon > 0$ and $n \ge C_{10}/\varepsilon$.

Combining \eqref{eq:finite-time-1}, \eqref{eq:finite-time-6} and \eqref{eq:finite-time-7}, we have the desired result
\begin{equation}
	\label{eq:finite-time-8}
	\Pmb \left(\sup_{0 \le t \le T} W_1(\emppre^n_t, \lawlimitmean_t) > \varepsilon\right) \le C_{11} \left( \frac{T}{\varepsilon^2} + 1 \right) e^{- C_{12}\sqrt{n} \varepsilon}
\end{equation}
for all $\varepsilon > 0$ and $n \ge \max(N_0,C_{10}) \max(\varepsilon^{-(d'+2)},1)$.
If in addition \eqref{eq:condition-special} holds, then using Proposition \ref{prop:p-n-eps}(i) we can improve $\sqrt{n}\varepsilon$ in \eqref{eq:finite-time-7} and \eqref{eq:finite-time-8} to $n\varepsilon^2$.
This completes the proof.
\end{proof}

\section{Acknowledgment}
We would like to thank the anonymous referees for their careful reading and many valuable suggestions on the paper.

\appendix

\section{Proofs of results in Section \ref{sec:property}}
\label{sec:appendix-1}

\begin{proof}[Proof of Lemma \ref{lem:square-exponential-moment}]
	The proof is similar to that of \cite[Proposition 4.3]{BudhirajaFan2017} and hence we only provide a sketch.
	
	(i) For each $t \in [0,T]$ and $u \in I$, using Proposition \ref{prop:well-posedness}(i) and the linear growth properties of $f$ and $b$, we have
	\begin{equation*}
		|\Xlimit_u(t)| \le |\Xlimit_u(0)| + \int_0^t C_1(T)(1+|\Xlimit_u(s)|)\,ds + |B_u(t)|.
	\end{equation*}
	It then follows from Gronwall's inequality that
	\begin{equation*}
		\sup_{t \in [0,T]} |\Xlimit_u(t)| \le C_2(T)\left( |\Xlimit_u(0)| + 1 + \sup_{t \in [0,T]} |B_u(t)| \right).
	\end{equation*}
	Thus for $\theta > 0$,
	\begin{align*}
		\Emb\left[ \sup_{t \in [0,T]} e^{\theta |\Xlimit_u(t)|^2}\right] & \le \Emb\left[ e^{C_3(T)\theta |\Xlimit_u(0)|^2} e^{C_3(T)\theta} e^{C_3(T)\theta \sup_{t \in [0,T]} |B_u(t)|^2} \right] \\
		& \le \left( \Emb\left[ e^{2C_3(T)\theta |\Xlimit_u(0)|^2} \right] \right)^{1/2} e^{C_3(T)\theta} \left( \Emb\left[e^{2C_3(T)\theta \sup_{t \in [0,T]} |B_u(t)|^2} \right] \right)^{1/2}.
	\end{align*}
	The proof of part (i) follows by choosing $\theta>0$ such that $\Emb\left[ e^{2C_3(T)\theta |\Xlimit_u(0)|^2} \right]<\infty$ and $\Emb\left[e^{2C_3(T)\theta \sup_{t \in [0,T]} |B_u(t)|^2} \right]<\infty$.
	
	(ii) Fix $\theta \in (0,(\kappa \wedge \theta_0)/4)$. 
	Note that for the function $\phi(x):=e^{\theta|x|^2}$, we have $\nabla\phi(x)=2\theta e^{\theta|x|^2}x$ and $\Delta\phi(x)=2\theta e^{\theta|x|^2}(d+2\theta|x|^2)$.
	It then follows from It\^o's formula that
	\begin{align}
		d e^{\theta|\Xlimit_u(t)|^2} & = e^{\theta|\Xlimit_u(t)|^2} \left[2\theta\Xlimit_u(t) \cdot \left( f(\Xlimit_u(t))\,dt + \int_0^1 \int_{\Rmb^d} b(\Xlimit_u(t),x)G(u,v) \,\lawlimit_{v,t}(dx)\,dv\,dt \right. \right. \notag \\
		& \quad + dB_u(t) \Big) + \theta \left( d+2\theta|\Xlimit_u(t)|^2 \right)dt \Big]. \label{eq:square-exp-pf-1}
	\end{align}
	Using \eqref{eq:c0} we have
	\begin{align}
		\Xlimit_u(t) \cdot f(\Xlimit_u(t)) & = \left(\Xlimit_u(t)-0\right) \cdot \left(f(\Xlimit_u(t))-f(0)\right) + \Xlimit_u(t) \cdot f(0) \notag \\
		& \le -c_0|\Xlimit_u(t)|^2 + C_1|\Xlimit_u(t)| \le -(c_0- \half K_b)|\Xlimit_u(t)|^2 + C_2. \label{eq:square-exp-pf-2}
	\end{align}
	Using Lemma \ref{lem:Lipschitz-special} and the Lipschitz property of $b$ we have
	\begin{equation}
		\label{eq:square-exp-pf-3}
		\Xlimit_u(t) \cdot \int_0^1 \int_{\Rmb^d} b(\Xlimit_u(t),x)G(u,v) \,\lawlimit_{v,t}(dx)\,dv \le K_b|\Xlimit_u(t)|^2 + C_3|\Xlimit_u(t)| \le \frac{3}{2}K_b|\Xlimit_u(t)|^2 + C_4.
	\end{equation}	
	Similar to the proof of \cite[Proposition 4.3(ii)]{BudhirajaFan2017}, one can verify that
	$$\int_0^t e^{\theta|\Xlimit_u(s)|^2} \Xlimit_u(s) \cdot dB_u(s)$$
	is a square integrable martingale for all $\theta \in (0,(\kappa \wedge \theta_0)/4)$.
	Then 
	$$\Emb \int_{t_1}^{t_2} e^{\theta|\Xlimit_u(t)|^2} 2\theta\Xlimit_u(t) \cdot dB_u(t)=0, \quad \forall\, t_2 \ge t_1 \ge 0.$$
	Combining this and \eqref{eq:square-exp-pf-1}--\eqref{eq:square-exp-pf-3} gives
	\begin{equation*}
		d\Emb \left[e^{\theta|\Xlimit_u(t)|^2}\right] \le \Emb \left[ e^{\theta|\Xlimit_u(t)|^2} \left(A + B|\Xlimit_u(t)|^2\right) \right] dt,
	\end{equation*}
	where
	\begin{equation*}
		A = \theta C_5, \quad
		B = 2\theta^2-2\kappa\theta.
	\end{equation*}
	Since $\theta < (\kappa \wedge \theta_0)/4 < \kappa$, we have $B<0$.
	Therefore
	\begin{equation}
		\label{eq:square-exp-pf-4}
		d\Emb \left[e^{\theta|\Xlimit_u(t)|^2}\right] \le C_6(\theta) \Emb \left[ e^{\theta|\Xlimit_u(t)|^2} \left(C_7(\theta) - |\Xlimit_u(t)|^2\right) \right] dt.
	\end{equation}
	Decomposing the expectation on the right hand side of \eqref{eq:square-exp-pf-4} according to the size of $|\Xlimit_u(t)|$, we get
	\begin{equation*}
		d\Emb \left[e^{\theta|\Xlimit_u(t)|^2}\right] \le \left(C_8(\theta)-C_9(\theta)\Emb \left[e^{\theta|\Xlimit_u(t)|^2}\right]\right)dt.
	\end{equation*}
	Since $\theta < (\kappa \wedge \theta_0)/4 < \theta_0$, $\Emb \left[e^{\theta|\Xlimit_u(0)|^2}\right]<\infty$.
	A standard estimate shows that $\sup_{t \ge 0} \Emb \left[e^{\theta|\Xlimit_u(t)|^2}\right]<\infty$, uniformly in $u \in I$.
\end{proof}

\begin{proof}[Proof of Lemma \ref{lem:regularity-moment}]
	Fix $T \in (0,\infty)$, $0 \le s \le t \le T$ and $u \in I$.
	We will suppress the dependency of constants $C(T)$'s on $T$.
	From \eqref{eq:system} we have
	\begin{equation*}
		\Xlimit_u(t)-\Xlimit_u(s) = \int_s^t \left( f(\Xlimit_u(r)) + \int_0^1 \int_{\Rmb^d} b(\Xlimit_u(r),x)G(u,v) \,\lawlimit_{v,r}(dx)\,dv\right)dr + B_u(t) - B_u(s).
	\end{equation*}
	It then follows from the linear growth properties of $f$ and $b$ and Proposition \ref{prop:well-posedness}(i) that
	\begin{equation*}
		|\Xlimit_u(t)-\Xlimit_u(s)| \le C_1 \int_s^t \left( 1 + |\Xlimit_u(r)| \right)dr + |B_u(t) - B_u(s)|.
	\end{equation*}
	Using Proposition \ref{prop:well-posedness}(i) and Holder's inequality we have
	\begin{align*}
		\Emb |\Xlimit_u(t)-\Xlimit_u(s)|^4 & \le C_2 (t-s)^{3} \int_s^t \left( 1 + \Emb |\Xlimit_u(r)|^4 \right) dr + C_2 \Emb |B_u(t) - B_u(s)|^4 \\
		& \le C_3 (t-s)^4 + C_3 (t-s)^{2}.
	\end{align*}
	This gives the first assertion.
	
	For the last assertion, fix $t_0,\Delta \in [0,T]$, $t_0 \le s \le t \le t_0+\Delta$ and $u \in I$.
	Since
	\begin{align*}
		& \Emb \left[ \sup_{t_0 \le s \le t \le t_0+\Delta} \exp \left(\theta |\Xlimit_u(t)-\Xlimit_u(s)|^2 \right) \right] \\
		& \le \Emb \left[ \sup_{t_0 \le s \le t \le t_0+\Delta} \exp \left(2\theta |\Xlimit_u(t)-\Xlimit_u(t_0)|^2 + 2\theta |\Xlimit_u(s)-\Xlimit_u(t_0)|^2 \right) \right] \\
		& \le \Emb \left[ \sup_{t_0 \le t \le t_0+\Delta} \exp \left(4\theta |\Xlimit_u(t)-\Xlimit_u(t_0)|^2 \right) \right]
	\end{align*}
	for $\theta > 0$, it suffices to show that
	\begin{equation}
		\label{eq:regularity-moment-claim}
		\Emb \left[ \sup_{t_0 \le t \le t_0+\Delta} \exp \left(\theta_T |\Xlimit_u(t)-\Xlimit_u(t_0)|^2 \right) \right] \le 1+C(T)\Delta
	\end{equation}
	for some $\theta_T,C(T)>0$.
	Let $\theta \colon [0,T] \to \Rmb$ be a non-negative continuously differentiable function (to be chosen later) and write
	$$Z_u(r) := e^{\theta(r)|\Xlimit_u(r)-\Xlimit_u(t_0)|^2}.$$
	Using It\^o's formula we have
	\begin{align*}
		& Z_u(t) - 1 \\
		& = M_u(t) + \int_{t_0}^t Z_u(r) \left[ 2\theta(r)(\Xlimit_u(r)-\Xlimit_u(t_0)) \cdot \left( f(\Xlimit_u(r)) + \int_0^1 \int_{\Rmb^d} b(\Xlimit_u(r),x)G(u,v) \,\lawlimit_{v,r}(dx)\,dv\right) \right. \\
		& \left. \qquad \qquad + \theta(r) \left(d+2\theta(r)|\Xlimit_u(r)-\Xlimit_u(t_0)|^2\right) + \theta'(r)|\Xlimit_u(r)-\Xlimit_u(t_0)|^2\right]dr,
	\end{align*}
	where
	$$M_u(t):= \int_{t_0}^t 2Z_u(r) \theta(r)(\Xlimit_u(r)-\Xlimit_u(t_0)) \cdot dB_u(r).$$
	Using Lemma \ref{lem:square-exponential-moment}(i) we can choose some small $\zeta \in (0,\infty)$ such that 
	\begin{equation}
		\label{eq:regularity-moment-pf-1}
		\sup_{u \in I} \sup_{0 \le r \le T} \Emb e^{\zeta|\Xlimit_u(r)-\Xlimit_u(t_0)|^2} < \infty.
	\end{equation}
	The function $\theta(r)$ will be chosen below such that $\sup_{0 \le r \le T} \theta(r) < \zeta/2$.
	With such a choice of $\theta(r)$, $M_u(r)$ is a martingale.
	By Young's inequality and the linear growth properties of $f$ and $b$, we have that for every $\eta > 0$,
	\begin{align*}
		& 2(\Xlimit_u(r)-\Xlimit_u(t_0)) \cdot \left( f(\Xlimit_u(r)) + \int_0^1 \int_{\Rmb^d} b(\Xlimit_u(r),x)G(u,v) \,\lawlimit_{v,r}(dx)\,dv\right) \\
		& \le \eta |\Xlimit_u(r)-\Xlimit_u(t_0)|^2 + \frac{C_4(1+|\Xlimit_u(r)|^2)}{\eta}.
	\end{align*}
	So, by letting
	\begin{align*}
		A(r) & := \theta(r) \left[ d + \frac{C_4(1+|\Xlimit_u(r)|^2)}{\eta} \right], \\
		B(r) & := \eta \theta(r) + 2\theta^2(r) + \theta'(r),
	\end{align*}
	we obtain
	\begin{align*}
		Z_u(t) \le 1 + M_u(t) + \int_{t_0}^t Z_u(r) \left[ A(r) + B(r) |\Xlimit_u(r)-\Xlimit_u(t_0)|^2 \right] dr.
	\end{align*}
	The remaining argument is similar to that in \cite[Section 5.1]{BolleyGuillinVillani2007quantitative} and so we only give a sketch.
	Choose $\theta(r)$ to be the solution of the ODE
	$$\eta \theta(r) + 2\theta^2(r) + \theta'(r) = 0$$
	with $\theta(0) \in (0,\zeta/2)$.
	It is easy to see that the solution is decreasing and strictly positive.
	Thus $B(r)=0$ and $0 < \theta(T) \le \theta(r) < \zeta/2$ for every $r \in [0,T]$.
	As a consequence,
	$$\Emb \sup_{t_0 \le t \le t_0+\Delta} Z_u(t) \le 1 + \Emb \sup_{t_0 \le t \le t_0+\Delta} M_u(t) + \int_{t_0}^{t_0+\Delta} \Emb Z_u(r)A(r) \,dr.$$
	Using the bound in \eqref{eq:regularity-moment-pf-1} one can check exactly as in \cite[Section 5.1]{BolleyGuillinVillani2007quantitative} that
	$$\sup_{0 \le r \le T} \Emb Z_u(r)A(r) < \infty \mbox{ and } \Emb \sup_{t_0 \le t \le t_0+\Delta} M_u(t) \le C_5\Delta.$$
	This proves \eqref{eq:regularity-moment-claim}, with $C(T):=C_5+\sup_{0 \le r \le T} \Emb Z_u(r)A(r) < \infty$, and hence the result follows.
\end{proof}

\begin{proof}[Proof of Lemma \ref{lem:regularity-n}]
	Noting that 
	$W_1(\emplimit^n_s, \emplimit^n_t) \le \frac{1}{n} \sum_{i=1}^n |\Xlimit_{\frac{i}{n}}(t)-\Xlimit_{\frac{i}{n}}(s)|,$
	we have
	$$\Pmb \left(\sup_{t_0 \le s,t \le t_0+\Delta} W_1(\emplimit^n_s, \emplimit^n_t) > \varepsilon\right) \le \Pmb \left(\frac{1}{n} \sum_{i=1}^n V_i^n > \varepsilon\right),$$
	where
	$V_i^n := \sup_{t_0 \le s,t \le t_0+\Delta} |\Xlimit_{\frac{i}{n}}(t)-\Xlimit_{\frac{i}{n}}(s)|.$
	
	By Markov's inequality and the independence of $\Xlimit_u$, we have
	$$\Pmb \left(\frac{1}{n} \sum_{i=1}^n V_i^n > \varepsilon\right) \le \exp \left( -n \sup_{\theta \ge 0} \left[ \varepsilon\theta - \frac{1}{n} \sum_{i=1}^n \log \Emb \exp(\theta V_i^n) \right] \right).$$
	But for any given $\theta \ge 0$, with $C(T)$ and $\theta_T$ as in Lemma \ref{lem:regularity-moment}, we have
	$$\Emb \exp(\theta V_i^n) \le \Emb \exp\left(\frac{\theta^2}{4\theta_T} + \theta_T(V_i^n)^2 \right) \le (1+C(T)\Delta)\exp\left(\frac{\theta^2}{4\theta_T}\right).$$
	Therefore
	\begin{align*}
		\Pmb \left(\frac{1}{n} \sum_{i=1}^n V_i^n > \varepsilon\right) & \le \exp \left( -n \sup_{\theta \ge 0} \left[ \varepsilon\theta - \frac{\theta^2}{4\theta_T} - \log(1+C(T)\Delta) \right] \right) \\
		& = \exp \left( -n \left[ \theta_T\varepsilon^2 - \log(1+C(T)\Delta) \right] \right) \le \exp \left( -n \left[ \theta_T\varepsilon^2 - C(T)\Delta \right] \right).
	\end{align*}
	This completes the proof.
\end{proof}

\section{Sub-Gaussian inequality}
\label{sec:appendix-2}

\begin{proof}[Proof of Lemma \ref{lem:sub-Gaussian}]
	Since $\Emb Y=0$, we have
	\begin{equation*}
		\Emb[e^{\lambda \cdot Y}]
		= 1 + \Emb\left[ \sum_{k=2}^\infty \frac{(\lambda \cdot Y)^k}{k!} \right] 
		\le 1 + \frac{|\lambda|^2}{2} \Emb\left[ \sum_{k=2}^\infty \frac{|\lambda|^{k-2} |Y|^k}{(k-2)!} \right] 
		= 1 + \frac{|\lambda|^2}{2} \Emb\left[ |Y|^2 e^{|\lambda||Y|} \right].
	\end{equation*}
	Since $x^2 < 2a(1+\frac{x^2}{2a}) \le 2ae^{x^2/2a}$ and $tx \le \frac{at^2}{2} + \frac{x^2}{2a}$ for $x,t \in \Rmb$, we have
	\begin{align*}
		\Emb[e^{\lambda \cdot Y}] & \le 1 + \frac{|\lambda|^2}{2} \Emb\left[ 2a \exp \left(\frac{|Y|^2}{2a}\right) \exp \left(\frac{a|\lambda|^2}{2} + \frac{|Y|^2}{2a}\right) \right] \\
		& = 1 + a|\lambda|^2 \exp \left(\frac{a|\lambda|^2}{2}\right) \Emb\left[ \exp \left(\frac{|Y|^2}{a}\right) \right] \\
		& \le \left(1 + 2a|\lambda|^2 \right) \exp \left(\frac{a|\lambda|^2}{2}\right) \\
		& \le \exp \left(\frac{5a}{2}|\lambda|^2\right),
	\end{align*}
	where the last line follows on noting that $1+2ax \le e^{2ax}$ for $x \in \Rmb$.
\end{proof}

\section{Bounds of independent random variables}
\label{sec:appendix-3}

In this section we prove the following heterogeneous version of the result with i.i.d.\ setup in \cite[Theorem 2.1]{BolleyGuillinVillani2007quantitative} .
We will keep the use of $X$, $\mu$, etc.\ as in \cite{BolleyGuillinVillani2007quantitative}.
These notations are used in this section only.

\begin{Proposition}
	\label{prop:concentration-empirical}
	Let $\{X^i:i \in \Nmb\}$ be independent $\Rd$-valued random variables. Write
	\begin{equation*}
		\muhat^n := \frac{1}{n} \sum_{i=1}^n \delta_{X^i}, \quad \mu_i := \Lmc(X^i), \quad \mu^n := \frac{1}{n} \sum_{i=1}^n \mu_i.
	\end{equation*}
	Suppose $E_\alpha := \sup_{i \in \Nmb} \int_{\Rd} e^{\alpha |x|^2} \, \mu_i(dx) < \infty$ for some $\alpha > 0$.
	Let $p \in [1,2]$.
	Suppose there is some $\lambda>0$ such that $\mu^n$ satisfies the Talagrand $T_p(\lambda)$ inequality for each $n \in \Nmb$, namely
	\begin{equation}
		\label{eq:Talagrand}
		W_p(\nu,\mu^n) \le \sqrt{\frac{2}{\lambda} H(\nu\|\mu^n)}
	\end{equation}
	for any $\nu \in \Pmc(\Rd)$, where $H(\cdot\|\cdot)$ is the relative entropy.
	Then, for any $d'>d$ and $\lambda'<\lambda$, there exists some $N_0 \in \Nmb$, depending only on $\lambda',d',\alpha,E_\alpha$, such that for any $\varepsilon>0$ and $n \ge N_0 \max(\varepsilon^{-(d'+2)},1)$,
	\begin{equation*}
		\Pmb\left( W_p(\muhat^n,\mu^n) > \varepsilon \right) \le \exp \left(-\gamma_p\frac{\lambda'}{2} n\varepsilon^2 \right),
	\end{equation*}
	where
	\begin{equation*}
		\gamma_p = \begin{cases}
		1, & \mbox{if } 1 \le p < 2, \\
		3-2\sqrt{2}, & \mbox{if } p=2.
		\end{cases}
	\end{equation*}
\end{Proposition}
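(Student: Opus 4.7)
The plan is to mirror the strategy of Bolley--Guillin--Villani's Theorem 2.1, adapting each step from the i.i.d.\ setting to the heterogeneous independent setting. The common law in the i.i.d.\ version is used only for (i) a uniform tail bound on the samples and (ii) as reference for the Talagrand inequality; here both roles are played by $\mu^n$ together with the uniform exponential moment $E_\alpha$, so the strategy transfers with the Sanov-type ingredient replaced by Hoeffding's inequality applied to independent (but not identically distributed) indicators.

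First I would establish a uniform tail estimate: $\sup_i \int e^{\alpha|x|^2}\mu_i(dx) \le E_\alpha$ gives $\mu_i(B_R^c) \le E_\alpha e^{-\alpha R^2}$ for every $i$ via Markov, so a union bound yields $\Pmb(\exists\, i \le n : |X^i| > R) \le n E_\alpha e^{-\alpha R^2}$. Choosing $R = R(n,\varepsilon)$ appropriately makes this contribution negligible compared with the target $\exp(-\tfrac{\gamma_p \lambda'}{2} n \varepsilon^2)$. Next, partition $B_R$ into $K \le C_d (R/\delta)^d$ cells $\{B_j\}$ of diameter $\le \delta$, write $\hat p_j := \muhat^n(B_j)$ and $p_j^n := \mu^n(B_j)$, and push $\muhat^n$ and $\mu^n$ onto measures $\muhat^{n,\delta}$, $\mu^{n,\delta}$ concentrated on cell centers, incurring only a $\delta$-Wasserstein error plus a tail term. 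Applying the hypothesized $T_p(\lambda)$ inequality to $\muhat^{n,\delta}$ against $\mu^n$ yields
\begin{equation*}
W_p(\muhat^{n,\delta}, \mu^n)^2 \le \frac{2}{\lambda} H(\muhat^{n,\delta} \| \mu^n) \le \frac{2}{\lambda} \sum_{j=1}^K \hat p_j \log \frac{\hat p_j}{p_j^n},
\end{equation*}
reducing the problem to controlling the discrete relative entropy of cell counts.

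The crucial heterogeneous step is the concentration of these counts. Since $\hat p_j = n^{-1}\sum_{i=1}^n \ind_{B_j}(X^i)$ is an average of independent $[0,1]$-valued variables with means $\mu_i(B_j)$ averaging to $p_j^n$, Hoeffding's inequality gives
\begin{equation*}
\Pmb(|\hat p_j - p_j^n| > \eta_j) \le 2 \exp(-2 n \eta_j^2),
\end{equation*}
uniformly over the heterogeneity. This bound directly replaces the Sanov-type step used in the i.i.d.\ version of BGV and is the one place where the extension needs care. A union bound over the $K$ cells combined with $\hat p_j \log(\hat p_j / p_j^n) \le (\hat p_j - p_j^n)^2/(p_j^n \wedge \hat p_j) + (\hat p_j - p_j^n)$ (or a simpler $\ell^1$ control) then turns the cell-mass concentration into an entropy bound valid on a high-probability event.

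Finally I would optimize: choose $R \sim \varepsilon^{-\beta}$ (or $\sqrt{\log n}$), $\delta \sim \varepsilon$, and tolerances $\eta_j$ so that the truncation contribution, the $\delta$-discretization error, and the cell-count fluctuations together yield $W_p(\muhat^n, \mu^n) \le \varepsilon$ with probability at least $1 - \exp(-\tfrac{\gamma_p \lambda'}{2} n \varepsilon^2)$. Taking $\lambda' < \lambda$ absorbs the slack from the triangle-inequality splits of $W_p$, while the number of cells $K \lesssim \varepsilon^{-d'}$ for any $d' > d$ forces the condition $n \ge N_0 \max(\varepsilon^{-(d'+2)}, 1)$ in order for Hoeffding to dominate the union bound over cells. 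The main obstacle will be tracking the precise constant $\gamma_p$; in particular, $\gamma_2 = 3-2\sqrt{2} = (1+\sqrt 2)^{-2}$ emerges from optimizing a Young-type decomposition of the form $W_2^2 \le (1+\eta) W_2(\muhat^n,\muhat^{n,\delta})^2 + (1+\eta^{-1}) W_2(\muhat^{n,\delta}, \mu^n)^2$, and the bookkeeping, although structurally identical to the i.i.d.\ case, is the delicate mechanical step.
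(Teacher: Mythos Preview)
Your proposal has a genuine gap at the step where you invoke the $T_p(\lambda)$ inequality. You write
\[
W_p(\muhat^{n,\delta},\mu^n)^2 \;\le\; \tfrac{2}{\lambda}\,H(\muhat^{n,\delta}\|\mu^n)\;\le\;\tfrac{2}{\lambda}\sum_{j}\hat p_j\log\frac{\hat p_j}{p_j^n},
\]
but $\muhat^{n,\delta}$ is supported on finitely many cell centers while $\mu^n$ is (in general) non-atomic, so $H(\muhat^{n,\delta}\|\mu^n)=+\infty$ and the first inequality is vacuous. The sum on the right is $H(\muhat^{n,\delta}\|\mu^{n,\delta})$, the entropy of the coarse-grained pair, and by the data-processing inequality this is \emph{below} the fine-scale entropy, not above it; to use it you would need $T_p(\lambda)$ for $\mu^{n,\delta}$, which is neither assumed nor inherited from $\mu^n$ (the cell-center map is not Lipschitz). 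As a consequence, the Hoeffding-on-cells route yields \emph{some} exponential bound but cannot tie the exponent to the Talagrand constant $\lambda$, so it cannot produce the stated rate $\gamma_p\lambda'/2$.

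The paper uses $T_p$ at the level of the rate function rather than on the empirical measure itself. After truncation to $B_R$, a Chebyshev/duality argument gives the Sanov-type upper bound
\[
\Pmb(\muhat^n_R\in U)\;\le\;\exp\Bigl(-n\inf_{\nu\in U}H(\nu\|\mu^n_R)\Bigr)
\]
for convex compact $U\subset\Pmc(B_R)$; the single line that handles heterogeneity is $\tfrac{1}{n}\sum_{i}\log\Emb\,e^{\phi(X^i_R)}\le\log\int e^{\phi}\,d\mu^n_R$, by concavity of $\log$. One then covers $\{\nu:W_p(\nu,\mu^n_R)>\varepsilon\}$ by small $W_p$-balls and applies $T_p$ (transferred to $\mu^n_R$ with a controlled error) to lower-bound $\inf_U H(\cdot\|\mu^n_R)$ on each ball by essentially $\tfrac{\lambda}{2}\varepsilon^2$. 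This is what delivers the constant $\gamma_p\lambda'/2$; your Hoeffding step should be replaced by this Jensen/entropy-duality argument.
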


\begin{Remark}
	The assumption that $\mu^n$ satisfies the $T_p(\lambda)$ inequality, for some $p \in [1,2]$ and $\lambda>0$, implies the existence of a square-exponential moment of $\mu^n$.
	However, this is different from the assumption $E_\alpha < \infty$ which is a uniform bound on the square-exponential moment of each $\mu_i$.
	The latter is not redundant and will be used in the proof of Proposition \ref{prop:concentration-empirical}. 
\end{Remark}

\begin{proof}[Proof of Proposition \ref{prop:concentration-empirical}]
The proof follows the three-step argument in \cite[Section 3.1]{BolleyGuillinVillani2007quantitative} and hence we only report what is different.
In particular, details are provided for Step 1 and a part of Step 2 where the heterogeneity requires some additional estimates.
Only a sketch is provided for the remaining part of Step 2 and Step 3.

\textbf{Step 1 (Truncation)} Let $R>0$ and $B_R:= \{x \in \Rd : |x| \le R\}$.
Truncate $\mu_i$ into a probability measure $\mu_{i,R}$ on the ball $B_R$, namely 
$\mu_{i,R}(\cdot) := \one_{B_R} \mu_i(\cdot)/\mu_i(B_R)$. 
Let $\mu^n_R := \frac{1}{n} \sum_{i=1}^n \mu_{i,R}$.
 
Let $\{Y^i:i \in \Nmb\}$ be independent $\Rd$-valued random variables such that $\Lmc(Y^i)=\mu_{i,R}$, and $\{X^i:i \in \Nmb\}$ and $\{Y^i:i \in \Nmb\}$ are independent.
Define
\begin{equation*}
	X_R^i := \begin{cases}
	X^i, \: \mbox{ if } |X^i| \le R, \\
	Y^i, \: \mbox{ if } |X^i| > R.
	\end{cases}
\end{equation*}
Then
\begin{equation}
	\label{eq:app-1}
	|X^i-X^i_R| = |X^i-Y^i| \one_{|X^i| > R} \le (|X^i|+R) \one_{|X^i| > R} \le 2|X^i| \one_{|X^i| > R}
\end{equation}
and hence
\begin{equation*}
	W_p^p(\mu^n, \mu^n_R) \le \frac{1}{n} \sum_{i=1}^n \Emb |X^i-X^i_R|^p \le \frac{1}{n} \sum_{i=1}^n 2^p \Emb \left( |X^i|^p \one_{|X^i| > R} \right) = \frac{1}{n} \sum_{i=1}^n 2^p \int_{\{|x| > R\}} |x|^p \,\mu_i(dx).
\end{equation*}
If $R \ge \sqrt{p/(2\alpha)}$, then the function $r \mapsto r^p/e^{\alpha r^2}$ is nonincreasing for $r \ge R$, and then
\begin{equation*}
	W_p^p(\mu^n, \mu^n_R) \le \frac{1}{n} \sum_{i=1}^n 2^p \frac{R^p}{e^{\alpha R^2}} \int_{\{|x| > R\}} e^{\alpha |x|^2} \,\mu_i(dx).
\end{equation*}
We conclude that 
\begin{equation}
	\label{eq:21}
	W_p^p(\mu^n, \mu^n_R) \le 2^p E_\alpha R^p e^{-\alpha R^2} \quad \mbox{ for all } R \ge \sqrt{p/(2\alpha)}.
\end{equation}

On the other hand, the empirical measures
\begin{equation*}
	\muhat^n = \frac{1}{n} \sum_{i=1}^n \delta_{X^i}, \quad \muhat^n_R := \frac{1}{n} \sum_{i=1}^n \delta_{X^i_R}
\end{equation*}
satisfy
\begin{equation*}
	W_p^p(\muhat^n_R, \muhat^n) \le \frac{1}{n} \sum_{i=1}^n |X_R^i-X^i|^p \le \frac{1}{n} \sum_{i=1}^n Z^i
\end{equation*}
by \eqref{eq:app-1}, where $Z^i := 2^p |X^i|^p \one_{|X^i| > R}$, $i=1,\dotsc,n$.
Then, for any $p \in [1,2]$, we can introduce parameters $\varepsilon,\theta>0$, and use Chebyshev's exponential inequality and the independence of $Z^i$ to obtain
\begin{equation}
	\Pmb\left[ W_p(\muhat^n_R, \muhat^n) > \varepsilon \right] \le \Pmb\left[ \frac{1}{n} \sum_{i=1}^n Z^i > \varepsilon^p \right] \le \exp \left( -n\theta \varepsilon^p + \sum_{i=1}^n \log \Emb \exp( \theta Z^i) \right). \label{eq:22}
\end{equation}

In the case when $p \in[1,2)$, for any $\alpha_1 \in (0,\alpha)$, there exists some constant $R_1=R_1(\alpha_1,p)$ such that
	$2^{p} \theta r^{p} \leq \alpha_{1} r^{2}$
for all $\theta > 0$ and $r \ge R_1 \theta^{\frac{1}{2-p}}$.
Hence
\begin{align*}
	\Emb \exp \left(\theta Z^i \right) & \le \Emb \exp \left(\alpha_1|X^i|^2 \one_{|X^i|>R}\right) \le 1 + \Emb \left( e^{\alpha_1|X^i|^2} \one_{|X^i|>R} \right) \le 1 + e^{(\alpha_1-\alpha) R^2} E_\alpha
\end{align*}
for $R \ge R_1 \theta^{\frac{1}{2-p}}$.
From this and \eqref{eq:22} we have
\begin{equation*}
	\Pmb\left[ W_p(\muhat^n_R, \muhat^n) > \varepsilon \right] \le \exp \left( -n\left[ \theta \varepsilon^p - E_\alpha e^{(\alpha_1-\alpha) R^2} \right] \right).
\end{equation*}
From this, \eqref{eq:21} and the triangle inequality for $W_p$, we have
\begin{align}
	& \Pmb\left[ W_p(\mu^n, \muhat^n) > \varepsilon \right] \notag \\
	& \le \Pmb\left[ W_p(\mu^n, \mu^n_R) + W_p(\mu^n_R, \muhat^n_R) + W_p(\muhat^n_R, \muhat^n) > \varepsilon \right] \notag \\
	& \le \Pmb\left[ W_p(\mu^n_R, \muhat^n_R) > \eta \varepsilon - 2 E_\alpha^{1/p} R e^{-\frac{\alpha}{p} R^2} \right] + \Pmb\left[ W_p(\muhat^n_R, \muhat^n) > (1-\eta)\varepsilon \right] \notag \\
	& \le \Pmb\left[ W_p(\mu^n_R, \muhat^n_R) > \eta \varepsilon - 2 E_\alpha^{1/p} R e^{-\frac{\alpha}{p} R^2} \right] + \exp \left( -n\left[ \theta (1-\eta)^p\varepsilon^p - E_\alpha e^{(\alpha_1-\alpha) R^2} \right] \right). \label{eq:24}
\end{align}
This estimate was established for any given $p \in [1,2)$, $\eta \in (0,1)$, $\varepsilon, \theta > 0$, $\alpha_1 \in (0,\alpha)$ and $R \ge \sqrt{p/(2\alpha)} \vee R_1 \theta^{\frac{1}{2-p}}$, where $R_1$ is a constant depending only on $\alpha_1$ and $p$.

In the case when $p=2$, we let $Z^i := |X^i-Y^i|^2 \one_{|X^i| > R}$, $i=1,\dotsc,n$.
Starting from \eqref{eq:22} again, we choose $\alpha_1 \in (0,\alpha)$ and then $\theta:=\alpha_1/2$.
By definition of $Z^i$ and $\mu_{i,R}$,
\begin{align*}
	\Emb \left[ \exp \left( \frac{\alpha_1}{2} Z^i \right) \right] & = \int_{\Rmb^{2d}} \exp \left( \frac{\alpha_1}{2} |y-x|^2 \one_{|x| \ge R} \right) \mu_i(dx)\,\mu_{i,R}(dy) \\
	& = \mu_i(B_R) + \frac{1}{\mu_i(B_R)} \int_{|y| \le R} \int_{|x| \ge R} \exp \left( \frac{\alpha_1}{2} |y-x|^2 \right) \mu_i(dx)\,\mu_i(dy) \\
	& \le 1 + \left( 1 - E_\alpha e^{-\alpha R^2} \right)^{-1} \int_{|y| \le R} e^{\alpha_1 |y|^2} \,\mu_i(dy) \int_{|x| \ge R} e^{\alpha_1 |x|^2} \,\mu_i(dx) \\
	& \le 1 + 2 E_\alpha^2 e^{(\alpha_1-\alpha)R^2}
\end{align*}
for $R \ge R_2$, where $R_2=R_2(\alpha,E_\alpha)$ is some constant.
From this and \eqref{eq:22} we have
\begin{equation*}
	\Pmb \left[W_2(\muhat^n_R, \muhat^n) > \varepsilon \right] \le \exp \left( -n \left[ \frac{\alpha_1}{2} \varepsilon^2 - 2E_\alpha^2 e^{(\alpha_1-\alpha)R^2} \right] \right).
\end{equation*}
So a similar argument as in \eqref{eq:24} gives
\begin{align*}
	\Pmb\left[ W_2(\mu^n, \muhat^n) > \varepsilon \right] & \le \Pmb\left[ W_2(\mu^n_R, \muhat^n_R) > \eta \varepsilon - 2 E_\alpha^{1/2} R e^{-\frac{\alpha}{2} R^2} \right] 
	\\
	& \qquad + \exp \left( -n\left[ \frac{\alpha_1}{2} (1-\eta)^2\varepsilon^2 - 2E_\alpha^2 e^{(\alpha_1-\alpha) R^2} \right] \right). 
\end{align*}
This estimate was established for any given $\eta \in (0,1)$, $\varepsilon > 0$, $\alpha_1 \in (0,\alpha)$ and $R \ge \sqrt{p/(2\alpha)} \vee R_2$, where $R_2$ is a constant depending only on $\alpha$ and $E_\alpha$.

So, apart from some error terms, it will be sufficient to establish the result for the ``truncated" law $\mu^n_R$, whose support lies in the compact set $B_R$.

Next we prove that $\mu^n_R$ satisfies some modified $T_p$ inequality.
Let $\nu$ be a probability measure on $B_R$, absolutely continuous with respect to $\mu$ (and hence with respect to $\mu^n_R$).
Then, when $R \ge R_3$ for some constant $R_3=R_3(E_\alpha)$, we can write
\begin{align*}
	H(\nu\|\mu^n_R) - H(\nu\|\mu^n) & = \int_{B_R} \log \frac{d\nu}{d\mu^n_R} \,d\nu - \int_{B_R} \log \frac{d\nu}{d\mu^n} \,d\nu \\
	& = \log [\mu^n(B_R)] \ge \log \left( 1- e^{-\alpha R^2} \int_{\Rmb^d} e^{\alpha |x|^2} \,\mu^n(dx) \right) \\
	& \ge \log \left( 1- E_\alpha e^{-\alpha R^2} \right) \ge -2 E_\alpha e^{-\alpha R^2}.
\end{align*}
Since $\mu^n$ satisfies the $T_p(\lambda)$ inequality,
$$H(\nu\|\mu^n) \ge \frac{\lambda}{2} W_p^2(\mu^n,\nu) \ge \frac{\lambda}{2} \left( W_p(\mu^n_R,\nu) - W_p(\mu^n_R,\mu^n) \right)^2$$
by triangle inequality.
Combining these two displays, we obtain
$$H(\nu\|\mu^n_R) \ge \frac{\lambda}{2} \left( W_p(\mu^n_R,\nu) - W_p(\mu^n_R,\mu^n) \right)^2 - 2 E_\alpha e^{-\alpha R^2}.$$
From this, \eqref{eq:21} and elementary inequality
\begin{equation*}
	\forall a\in(0,1),\: \exists\, C_a>0 \mbox{ such that } \forall x,y\in\Rmb,\: (x-y)^2 \ge (1-a)x^2-C_ay^2,
\end{equation*}
We deduce that for any $\lambda_1<\lambda$ there exists some constant $K$ such that
\begin{equation*}
	H(\nu\|\mu^n_R) \ge \frac{\lambda_1}{2} W_p^2(\mu^n_R,\nu) - KR^2e^{-\alpha R^2}.
\end{equation*}

\textbf{Step 2 (Covering by small balls)} In this second step we derive quantitative estimates on $\muhat^n_R$.
Let $\phi$ be a bounded continuous function on $B_R$, and let $U$ be a Borel set in $\Pmc(B_R)$.
By Chebyshev's exponential inequality, the independence of the variables $X_R^i$, and the concavity of $\log(\cdot)$,
\begin{align*}
	\Pmb(\muhat_R^n \in U) & \le \exp\left( -n \inf_{\nu \in U} \int_{B_R} \phi \,d\nu \right) \Emb \left(e^{n\int_{B_R} \phi\,d\muhat_R^n}\right) \\
	& = \exp\left( -n \inf_{\nu \in U} \left[ \int_{B_R} \phi \,d\nu - \frac{1}{n} \log \Emb \left(e^{n\int_{B_R} \phi\,d\muhat_R^n}\right) \right] \right) \\
	& = \exp\left( -n \inf_{\nu \in U} \left[ \int_{B_R} \phi \,d\nu - \frac{1}{n} \sum_{i=1}^n \log \Emb \left(e^{\phi(X_R^i)}\right) \right] \right) \\
	& \le \exp\left( -n \inf_{\nu \in U} \left[ \int_{B_R} \phi \,d\nu - \log \frac{1}{n} \sum_{i=1}^n  \Emb \left(e^{\phi(X_R^i)}\right) \right] \right) \\
	& = \exp\left( -n \inf_{\nu \in U} \left[ \int_{B_R} \phi \,d\nu - \log \int_{B_R} e^\phi \, d\mu_R^n \right] \right).
\end{align*}
As $\phi$ is arbitrary, we can show that \cite[Equation (30)]{BolleyGuillinVillani2007quantitative} holds if $U$ is convex and compact, namely
\begin{equation*}
	\Pmb(\muhat_R^n \in U) \le \exp\left( -n \inf_{\nu \in U} H(\nu\|\mu_R^n) \right).
\end{equation*}
Following the argument in \cite[Pages 557--559]{BolleyGuillinVillani2007quantitative}, we can show that equations (36) and (37) therein hold. 
More precisely, given $p \in [1,2)$, $\lambda_2<\lambda$ and $\alpha_1<\alpha$, there exist some constants $K_1,K_2,K_3,R_4 \in (0,\infty)$ such that for all $\varepsilon, \zeta>0$ and $R \ge R_4 \max(1,\zeta^{\frac{1}{2-p}})$,
\begin{align}
	\Pmb\left(W_p(\muhat^n,\mu^n)>\varepsilon\right) & \le \left( K_2\frac{R}{\varepsilon} \vee 1 \right)^{K_2\left(\frac{R}{\varepsilon}\right)^d} \exp\left(-n \left[ \frac{\lambda_2}{2} \varepsilon^2 - K_1R^2e^{-\alpha R^2} \right] \right) \notag \\
	& + \exp \left( -n\left[ K_3 \zeta \varepsilon^p - K_4 e^{(\alpha_1-\alpha) R^2} \right] \right) 
	\label{eq:36} 
\end{align}
for some constant $K_4=K_4(\theta,\alpha_1)$;
while in the case $p=2$,
\begin{align}
	\Pmb\left(W_2(\muhat^n,\mu^n)>\varepsilon\right) & \le \left( K_2\frac{R}{\varepsilon} \vee 1 \right)^{K_2\left(\frac{R}{\varepsilon}\right)^d} \exp\left(-n \left[ \frac{\lambda_2}{2} \eta^2\varepsilon^2 - K_1R^2e^{-\alpha R^2} \right] \right) \notag \\
	& + \exp \left( -n\left[ \frac{\alpha_1}{2}(1-\eta)^2 \varepsilon^2 - K_4 e^{(\alpha_1-\alpha) R^2} \right] \right) 
	\label{eq:37} 
\end{align}
for any $\eta \in (0,1)$ and $R \ge R_4$.

\textbf{Step 3 (Choice of the parameters)} 
The argument is the same as that in \cite[Pages 559--561]{BolleyGuillinVillani2007quantitative}. 
Therefore we only provide a sketch here.
First consider the case $p \in [1,2)$.
Let $\lambda'<\lambda_2$, $\alpha'<\alpha$ and $d_1>d$.
Using \eqref{eq:36} one can show that \cite[(38)]{BolleyGuillinVillani2007quantitative} holds, that is,
\begin{equation}
	\label{eq:step3}
	\Pmb(W_p(\muhat^n,\mu^n)>\varepsilon) \le \exp\left( -\frac{\lambda'}{2}n\varepsilon^2 \right) + \exp(-\alpha'n\varepsilon^2)
\end{equation}
as soon as
$$R^2 \ge R_2\max\left(1, \varepsilon^2, \log(\frac{1}{\varepsilon^2})\right), \quad n\varepsilon^{d_1+2} \ge K_5R^{d_1}$$
for some constants $R_2$ and $K_5$ depending on $\mu^n$ only through $\lambda$, $\alpha$ and $E_\alpha$.
By choosing parameters $R, n, d'$ carefully as in \cite[(39)]{BolleyGuillinVillani2007quantitative}, one can find some $N_0 = N_0(\lambda',d',\alpha,E_\alpha)$ such that for all $\varepsilon > 0$, \eqref{eq:step3} holds
as soon as $n \ge N_0 \max(\varepsilon^{-(d'+2)},1)$.
Choosing $\alpha < \lambda/2$ gives the desired upper bound for the right hand side of \eqref{eq:step3}.

Finally consider the case $p=2$. 
Given $\lambda_3<\lambda_2$ and $\alpha_2<\alpha_1$, using \eqref{eq:37} one can show that similar estimates as in \eqref{eq:step3} holds, that is,
\begin{equation*}
	\Pmb(W_2(\muhat^n,\mu^n)>\varepsilon) \le \exp\left( -\frac{\lambda_3}{2}\eta^2n\varepsilon^2 \right) + \exp\left(-\frac{\alpha_2}{2}(1-\eta)^2n\varepsilon^2\right).
\end{equation*}
Taking $\alpha_2=\lambda_3/2$ and $\eta=\sqrt{2}-1$ gives the desired result.
\end{proof}

\bibliographystyle{plain}

\begin{bibdiv}
\begin{biblist}

\bib{BarreDobsonOttobreZatorska2021fast}{article}{
      author={Barr{\'e}, Julien},
      author={Dobson, Paul},
      author={Ottobre, Michela},
      author={Zatorska, Ewelina},
       title={Fast non-mean-field networks: Uniform in time averaging},
        date={2021},
     journal={SIAM Journal on Mathematical Analysis},
      volume={53},
      number={1},
       pages={937\ndash 972},
}

\bib{BayraktarChakrabortyWu2020graphon}{article}{
      author={Bayraktar, Erhan},
      author={Chakraborty, Suman},
      author={Wu, Ruoyu},
       title={Graphon mean field systems},
        date={2022},
     journal={The Annals of Applied Probability, accepted},
}

\bib{BayraktarWu2019mean}{article}{
      author={Bayraktar, Erhan},
      author={Wu, Ruoyu},
       title={Mean field interaction on random graphs with dynamically changing
  multi-color edges},
        date={2021},
     journal={Stochastic Processes and their Applications},
      volume={141},
       pages={197\ndash 244},
}

\bib{BayraktarWu2020stationarity}{article}{
      author={Bayraktar, Erhan},
      author={Wu, Ruoyu},
       title={Stationarity and uniform in time convergence for the graphon
  particle system},
        date={2022},
     journal={Stochastic Processes and their Applications},
      volume={150},
       pages={532\ndash 568},
}

\bib{BayraktarWuZhang2022propagation}{article}{
      author={Bayraktar, Erhan},
      author={Wu, Ruoyu},
      author={Zhang, Xin},
       title={Propagation of chaos of forward-backward stochastic differential
  equations with graphon interactions},
        date={2022},
     journal={arXiv preprint arXiv:2202.08163},
}

\bib{BetCoppiniNardi2020weakly}{article}{
      author={Bet, Gianmarco},
      author={Coppini, Fabio},
      author={Nardi, Francesca~R},
       title={Weakly interacting oscillators on dense random graphs},
        date={2020},
     journal={arXiv preprint arXiv:2006.07670},
}

\bib{BhamidiBudhirajaWu2019weakly}{article}{
      author={Bhamidi, Shankar},
      author={Budhiraja, Amarjit},
      author={Wu, Ruoyu},
       title={Weakly interacting particle systems on inhomogeneous random
  graphs},
        date={2019},
     journal={Stochastic Processes and their Applications},
      volume={129},
      number={6},
       pages={2174\ndash 2206},
}

\bib{BolleyGuillinMalrieu2010trend}{article}{
      author={Bolley, Fran{\c{c}}ois},
      author={Guillin, Arnaud},
      author={Malrieu, Florent},
       title={Trend to equilibrium and particle approximation for a weakly
  selfconsistent vlasov-fokker-planck equation},
        date={2010},
     journal={ESAIM: Mathematical Modelling and Numerical Analysis},
      volume={44},
      number={5},
       pages={867\ndash 884},
}

\bib{BolleyGuillinVillani2007quantitative}{article}{
      author={Bolley, Fran{\c{c}}ois},
      author={Guillin, Arnaud},
      author={Villani, C{\'e}dric},
       title={Quantitative concentration inequalities for empirical measures on
  non-compact spaces},
        date={2007},
     journal={Probability Theory and Related Fields},
      volume={137},
      number={3-4},
       pages={541\ndash 593},
}

\bib{BolleyVillani2005weighted}{inproceedings}{
      author={Bolley, Fran{\c{c}}ois},
      author={Villani, C{\'e}dric},
       title={Weighted csisz{\'a}r-kullback-pinsker inequalities and
  applications to transportation inequalities},
        date={2005},
   booktitle={Annales de la facult{\'e} des sciences de toulouse:
  Math{\'e}matiques},
      volume={14},
       pages={331\ndash 352},
}

\bib{BudhirajaFan2017}{article}{
      author={Budhiraja, Amarjit},
      author={Fan, Wai-Tong~Louis},
       title={Uniform in time interacting particle approximations for nonlinear
  equations of {P}atlak-{K}eller-{S}egel type},
        date={2017},
     journal={Electron. J. Probab.},
      volume={22},
       pages={37 pp.},
         url={https://doi.org/10.1214/17-EJP25},
}

\bib{BudhirajaMukherjeeWu2019supermarket}{article}{
      author={Budhiraja, Amarjit},
      author={Mukherjee, Debankur},
      author={Wu, Ruoyu},
       title={Supermarket model on graphs},
        date={201906},
     journal={Ann. Appl. Probab.},
      volume={29},
      number={3},
       pages={1740\ndash 1777},
         url={https://doi.org/10.1214/18-AAP1437},
}

\bib{CainesHuang2018graphon}{inproceedings}{
      author={Caines, Peter~E},
      author={Huang, Minyi},
       title={Graphon mean field games and the {GMFG} equations},
organization={IEEE},
        date={2018},
   booktitle={{2018 IEEE Conference on Decision and Control (CDC)}},
       pages={4129\ndash 4134},
}

\bib{CainesHuang2020graphon}{article}{
      author={Caines, Peter~E},
      author={Huang, Minyi},
       title={Graphon mean field games and their equations},
        date={2021},
     journal={SIAM Journal on Control and Optimization},
      volume={59},
      number={6},
       pages={4373\ndash 4399},
}

\bib{Carmona2019stochastic}{article}{
      author={Carmona, Ren{\'e}},
      author={Cooney, Daniel~B},
      author={Graves, Christy~V},
      author={Lauriere, Mathieu},
       title={{Stochastic graphon games: I. the static case}},
        date={2022},
     journal={Mathematics of Operations Research},
      volume={47},
      number={1},
       pages={750\ndash 778},
}

\bib{Coppini2021note}{article}{
      author={Coppini, Fabio},
       title={{A Note on Fokker--Planck Equations and Graphons}},
        date={2022},
     journal={Journal of Statistical Physics},
      volume={187},
      number={2},
       pages={1\ndash 12},
}

\bib{Coppini2019long}{article}{
      author={Coppini, Fabio},
       title={Long time dynamics for interacting oscillators on graphs},
        date={2022},
     journal={The Annals of Applied Probability},
      volume={32},
      number={1},
       pages={360\ndash 391},
}

\bib{CoppiniDietertGiacomin2019law}{article}{
      author={Coppini, Fabio},
      author={Dietert, Helge},
      author={Giacomin, Giambattista},
       title={A law of large numbers and large deviations for interacting
  diffusions on {E}rdős–{R}ényi graphs},
        date={2019},
     journal={Stochastics and Dynamics},
      volume={0},
      number={0},
       pages={2050010},
      eprint={https://doi.org/10.1142/S0219493720500100},
         url={https://doi.org/10.1142/S0219493720500100},
}

\bib{Delarue2017mean}{article}{
      author={Delarue, Fran{\c{c}}ois},
       title={{Mean field games: A toy model on an Erd{\"o}s-Renyi graph.}},
        date={2017},
     journal={ESAIM: Proceedings and Surveys},
      volume={60},
       pages={1\ndash 26},
}

\bib{DelarueLackerRamanan2020from}{article}{
      author={Delarue, François},
      author={Lacker, Daniel},
      author={Ramanan, Kavita},
       title={{From the master equation to mean field game limit theory: Large
  deviations and concentration of measure}},
        date={2020},
     journal={The Annals of Probability},
      volume={48},
      number={1},
       pages={211 \ndash  263},
         url={https://doi.org/10.1214/19-AOP1359},
}

\bib{Delattre2016}{article}{
      author={Delattre, Sylvain},
      author={Giacomin, Giambattista},
      author={Lu{\c{c}}on, Eric},
       title={A note on dynamical models on random graphs and
  {F}okker--{P}lanck equations},
        date={2016},
        ISSN={1572-9613},
     journal={Journal of Statistical Physics},
      volume={165},
      number={4},
       pages={785\ndash 798},
         url={https://doi.org/10.1007/s10955-016-1652-3},
}

\bib{DupuisMedvedev2020large}{article}{
      author={Dupuis, Paul},
      author={Medvedev, Georgi~S},
       title={The large deviation principle for interacting dynamical systems
  on random graphs},
        date={2022},
     journal={Communications in Mathematical Physics},
      volume={390},
      number={2},
       pages={545\ndash 575},
}

\bib{GaoTchuendomCaines2020linear}{article}{
      author={Gao, Shuang},
      author={Tchuendom, Rinel~Foguen},
      author={Caines, Peter~E},
       title={Linear quadratic graphon field games},
        date={2020},
     journal={arXiv preprint arXiv:2006.03964},
}

\bib{KaratzasShreve1991brownian}{book}{
      author={Karatzas, I.},
      author={Shreve, S.~E.},
       title={{Brownian Motion and Stochastic Calculus}},
      series={Graduate Texts in Mathematics},
   publisher={Springer New York},
        date={1991},
      volume={113},
        ISBN={9780387976556},
}

\bib{Kolokoltsov2010}{book}{
      author={Kolokoltsov, V.~N.},
       title={{Nonlinear Markov Processes and Kinetic Equations}},
      series={Cambridge Tracts in Mathematics},
   publisher={Cambridge University Press},
        date={2010},
      volume={182},
}

\bib{LackerSoret2020case}{article}{
      author={Lacker, Daniel},
      author={Soret, Agathe},
       title={A case study on stochastic games on large graphs in mean field
  and sparse regimes},
        date={2022},
     journal={Mathematics of Operations Research},
      volume={47},
      number={2},
       pages={1530\ndash 1565},
}

\bib{Lovasz2012large}{book}{
      author={Lov{\'a}sz, L{\'a}szl{\'o}},
       title={Large networks and graph limits},
   publisher={American Mathematical Soc.},
        date={2012},
      volume={60},
}

\bib{Lucon2020quenched}{article}{
      author={Lu{\c{c}}on, Eric},
       title={Quenched asymptotics for interacting diffusions on inhomogeneous
  random graphs},
        date={2020},
     journal={Stochastic Processes and their Applications},
}

\bib{McKean1967propagation}{incollection}{
      author={McKean, H.~P.},
       title={{Propagation of chaos for a class of non-linear parabolic
  equations}},
        date={1967},
   booktitle={Stochastic differential equations ({L}ecture {S}eries in
  {D}ifferential {E}quations, {S}ession 7, {C}atholic {U}niversity, 1967)},
   publisher={Air Force Office Sci. Res., Arlington, Va.},
       pages={41\ndash 57},
}

\bib{Medvedev2014nonlinear2}{article}{
      author={Medvedev, Georgi~S},
       title={The nonlinear heat equation on dense graphs and graph limits},
        date={2014},
     journal={SIAM Journal on Mathematical Analysis},
      volume={46},
      number={4},
       pages={2743\ndash 2766},
}

\bib{Medvedev2014nonlinear1}{article}{
      author={Medvedev, Georgi~S},
       title={The nonlinear heat equation on w-random graphs},
        date={2014},
     journal={Archive for Rational Mechanics and Analysis},
      volume={212},
      number={3},
       pages={781\ndash 803},
}

\bib{Medvedev2018continuum}{article}{
      author={Medvedev, Georgi~S},
       title={{The continuum limit of the Kuramoto model on sparse random
  graphs}},
        date={2018},
     journal={arXiv preprint arXiv:1802.03787},
}

\bib{OliveiraReis2019interacting}{article}{
      author={Oliveira, Roberto},
      author={Reis, Guilherme},
       title={Interacting diffusions on random graphs with diverging average
  degrees: Hydrodynamics and large deviations},
        date={2019},
     journal={Journal of Statistical Physics},
      volume={176},
       pages={1057\ndash 1087},
}

\bib{PariseOzdaglar2019graphon}{article}{
      author={Parise, Francesca},
      author={Ozdaglar, Asuman~E},
       title={Graphon games: A statistical framework for network games and
  interventions},
        date={2019},
     journal={Available at SSRN: https://ssrn.com/abstract=3437293},
}

\bib{Sznitman1991}{incollection}{
      author={Sznitman, A-S.},
       title={{Topics in propagation of chaos}},
        date={1991},
   booktitle={Ecole d'{E}t{\'e} de {P}robabilit{\'e}s de {S}aint-{F}lour
  {XIX}---1989},
      editor={Hennequin, Paul-Louis},
      series={Lecture Notes in Mathematics},
      volume={1464},
   publisher={Springer Berlin Heidelberg},
     address={Berlin, Heidelberg},
       pages={165\ndash 251},
}

\bib{VasalMishraVishwanath2020sequential}{inproceedings}{
      author={Vasal, Deepanshu},
      author={Mishra, Rajesh},
      author={Vishwanath, Sriram},
       title={Sequential decomposition of graphon mean field games},
organization={IEEE},
        date={2021},
   booktitle={{2021 American Control Conference (ACC)}},
       pages={730\ndash 736},
}

\bib{Veretennikov2006ergodic}{incollection}{
      author={Veretennikov, A~Yu},
       title={On ergodic measures for mckean-vlasov stochastic equations},
        date={2006},
   booktitle={Monte carlo and quasi-monte carlo methods 2004},
   publisher={Springer},
       pages={471\ndash 486},
}

\bib{Villani2008optimal}{book}{
      author={Villani, C{\'e}dric},
       title={Optimal transport: old and new},
   publisher={Springer Science \& Business Media},
        date={2008},
      volume={338},
}

\end{biblist}
\end{bibdiv}

\end{document}